\documentclass[11pt]{article}
\usepackage{amsfonts}
\usepackage{graphics}
\usepackage{indentfirst}
\usepackage{color}
\usepackage{cite}
\usepackage{float}
\usepackage{caption}
\usepackage{latexsym}
\usepackage[paper=a4paper, left=1.6cm, right=1.6cm, top=1.8cm, bottom=1.6cm, headheight=5.5pt, footskip=0.8cm, footnotesep=0.8cm, centering, includefoot]{geometry}
\usepackage{amsmath}
\allowdisplaybreaks
\usepackage{amssymb}
\usepackage[colorlinks, linkcolor=red]{hyperref}
\hypersetup{urlcolor=red, citecolor=red}
\usepackage[dvips]{epsfig}
\usepackage{amscd}

\usepackage{amsthm}
\usepackage{mathrsfs}
\usepackage{verbatim}
\newtheorem{theorem}{Theorem}[section]
\newtheorem{remark}{Remark}[section]
\newtheorem{definition}{Definition}[section]
\newtheorem{lemma}{Lemma}[section]
\newtheorem{corollary}{Corollary}[section]
\newtheorem{proposition}{Proposition}[section]

\allowdisplaybreaks

\renewcommand{\div}{{\rm div}}

\DeclareMathOperator{\loc}{loc}

\makeatletter
\@addtoreset{equation}{section}
\makeatother
\makeatletter
\@addtoreset{equation}{section}
\makeatother

\title{ Pressure effects on critical scaling and global low-regularity  solutions for compressible Navier--Stokes system
\thanks{This research was partially supported by National Natural Science Foundation of China (No. 12371227) and Fundamental Research Funds for the Central Universities (No. SWU--KU24001).}
}

\author{Lin Xu,\ Xin Zhong{\thanks{Corresponding author. E-mail addresses: mathxu@email.swu.edu.cn (L. Xu), xzhong1014@amss.ac.cn (X. Zhong).}}
	\date{}\\
	\footnotesize School of Mathematics and Statistics, Southwest University, Chongqing 400715, P. R. China}
\begin{document}
\maketitle

\begin{abstract}
	This paper investigates the three-dimensional compressible Navier--Stokes system with a polytropic pressure law and its pressureless counterpart arising from the high Mach number limit. We focus on the different critical scaling structures of these two models. In the presence of the pressure term, the pressure gradient is balanced with  the inertial and viscous effects, and thereby selects a fixed critical scaling for the pressure system. In contrast, once the pressure term is ignored, the pressureless system admits a more flexible one-parameter family of invariant scalings.
	
	For both systems, we establish the global well-posedness of strong solutions under low-regularity assumptions on the initial data, allowing vacuum and large oscillations. This improves the global result of Wen (Adv. Math. 482 (2025), Paper No. 110628), where higher regularity assumptions on the initial data are required. A central feature of our result is that the smallness conditions are {\it exactly invariant} under the intrinsic critical scalings of the corresponding systems. These scaling structures differ from the usual parabolic scaling used in the critical-space framework of Danchin (Invent. Math. 141 (2000), pp. 579--614), where the system is reformulated around a reference state. We also derive uniform a \textit{priori} estimates and obtain exponential decay estimates for the global strong solutions. The results show that the pressure term not only changes the analytic estimates, but also plays a decisive role in selecting the critical scaling structure and in determining the dynamical behavior of compressible flows.
\end{abstract}

\textit{Key words and phrases}. Compressible Navier--Stokes equations; scaling invariant; pressure; vacuum.

2020 \textit{Mathematics Subject Classification}. 35Q35; 35R05; 76N06; 76N10.

\setcounter{tocdepth}{2}
\tableofcontents

\section{Introduction}

\subsection{Compressible Navier--Stokes system with pressure}
The compressible Navier--Stokes system is one of the fundamental models in
fluid dynamics and describes the motion of viscous compressible fluids. The
mathematical analysis of this system has attracted extensive attention over the past several decades. Nevertheless, some  problems remain largely
open, such as the global existence of smooth solutions for general large
initial data and the uniqueness of finite-energy weak solutions.

Let $\rho=\rho(x,t)\geq 0$ and $u=u(x,t)\in\mathbb{R}^{3}$ denote the density and velocity of the fluid, respectively. The first model is the three-dimensional compressible Navier--Stokes system with pressure,
\begin{align}\tag{NS}\label{NS}
	\left\{
	\begin{aligned}
		&\rho_{t}+\operatorname{div}(\rho u)=0, \\[2mm]
		&(\rho u)_{t}+\operatorname{div}(\rho u\otimes u)
		-\mu\Delta u-(\lambda+\mu)\nabla\operatorname{div}u+\nabla P(\rho)=0,
	\end{aligned}
	\right.
\end{align}
where the pressure $P(\rho)$ is given by the polytropic law
\begin{align*}
	P=A\rho^{\gamma}, \qquad A>0,\quad \gamma>1.
\end{align*}
Here $\mu$ and $\lambda$ are viscosity coefficients satisfying the  physical restrictions
\begin{align*}
	\mu>0,\qquad 2\mu+3\lambda\geq 0.
\end{align*}

We now recall the scaling viewpoint associated with the critical space theory. This viewpoint is important because it reveals the natural critical regularity dictated by the scaling of the equation. Roughly speaking, a function space is called critical if
its norm is invariant under the natural scaling of the equation. To illustrate
this idea, let us first consider the incompressible Navier--Stokes equations
\begin{align}\tag{INS}\label{INS}
	\partial_{t} v-\Delta v+v\cdot\nabla v+\nabla \pi=0,
	\qquad
	\operatorname{div}v=0.
\end{align}
If $(v,\pi)$ is a solution to \eqref{INS}, then, for any $\xi>0$,
\begin{align}\label{INS-scaling}
	v^{\xi}(t,x)\triangleq \xi v(\xi^{2}t,\xi x),
	\qquad
	\pi^{\xi}(t,x)\triangleq \xi^{2}\pi(\xi^{2}t,\xi x)
\end{align}
is also a solution to \eqref{INS}. Hence a space $X$ for the initial velocity is
critical if
\begin{align*}
	\|v_{0}^{\xi}\|_{X}=\|v_{0}\|_{X},
	\qquad
	v_{0}^{\xi}(x)=\xi v_{0}(\xi x).
\end{align*}
Typical examples include $\dot{H}^{1/2}(\mathbb{R}^{3})$,
$L^{3}(\mathbb{R}^{3})$, $\dot B^{-1+3/p}_{p,q}(\mathbb{R}^{3})$, and
$\mathrm{BMO}^{-1}(\mathbb{R}^{3})$. Fujita and Kato \cite{FK64} proved the
well-posedness of \eqref{INS} in $\dot{H}^{1/2}(\mathbb{R}^{3})$. We refer to
\cite{K84,C1997,P1996,KT01} and the references therein for well-posedness
results in other critical spaces.

For compressible Navier--Stokes equations, the critical-space theory is more
delicate because the density and the pressure have to be taken into account. In the momentum equation, the inertial term, the viscous terms, and the pressure gradient must have the same scaling order. One may introduce the parabolic scaling
\begin{align}\label{com}
	\rho^{\xi}(t,x)\triangleq \rho(\xi^{2}t,\xi x),
	\qquad
	u^{\xi}(t,x)\triangleq \xi u(\xi^{2}t,\xi x),
	\qquad
	P^{\xi}(t,x)\triangleq \xi^{2}P(\xi^{2}t,\xi x).
\end{align}
With this transformation, the system keeps the same formal structure if the pressure is transformed as an independent variable. In the framework of critical spaces, Danchin \cite{D2000} made a breakthrough by establishing the global well-posedness of strong solutions to \eqref{NS} in $L^{2}$-type critical Besov spaces for initial data close to a constant equilibrium state. This result showed that the critical functional framework is well adapted to the mixed hyperbolic-parabolic structure of the compressible system. Danchin's critical Besov framework was subsequently extended in several directions. Charve--Danchin \cite{CD2010}, Chen--Miao--Zhang \cite{CMZ2010}, Haspot \cite{H2011b,H2011a}, and Guo--Song--Yang \cite{GSY2025} developed the theory in critical $L^{p}$-type Besov spaces and related larger critical frameworks. In addition, optimal time-decay estimates and large-time behavior in critical spaces have been studied in \cite{X2019,DX2017,XX2021}. These developments show that the scaling-invariant viewpoint is not merely a technical issue, but is closely related to the intrinsic structure of the equations.

The above critical-space results are obtained near a non-vacuum state. More precisely, the density is assumed to be a small perturbation of a positive constant equilibrium. Hence the density is  strictly away from vacuum. Once vacuum is allowed, the momentum equation happens degenerate in the region where the density vanishes. This degeneracy cannot be treated directly by the standard theory of degenerate PDEs, and estimates adapted to the special structure of  system are required. In this direction, a major breakthrough was achieved by Lions \cite{P1998}, and later refined in \cite{FNP01,F2004}, where the global existence of finite-energy weak solutions with vacuum was established under suitable assumptions on the adiabatic exponent $\gamma$. For strong or classical solutions with vacuum, Cho--Choe--Kim \cite{CCK04} proved the local existence and uniqueness of strong solutions under the initial regularity assumption
\begin{align}\label{re1}
	\left( \rho_{0}-\rho_{\infty}\right)  \in H^{1} \cap W^{1,q},
	\qquad
	u_{0} \in D_{0}^{1} \cap D^{2},
\end{align}
where $\rho_{\infty}\in[0,\infty)$ is a constant, together with the compatibility
condition
\begin{align}\label{re2}
	-\mu \Delta u_{0}-(\mu+\lambda)\nabla\operatorname{div} u_{0}
	+\nabla P(\rho_{0})=\sqrt{\rho_{0}}g
\end{align}
for some $g\in L^{2}$. Unlike the aforementioned non-vacuum perturbation problems in critical spaces, well-posedness results in the presence of vacuum are usually developed in inhomogeneous Sobolev spaces. One reason is that the velocity may fail to be $L^{2}$-integrable in vacuum regions; see \cite{LWX2019}. The compatibility condition \eqref{re2} is widely used in the vacuum problem.
This naturally raises the question of whether the regularity assumptions in
\eqref{re1} can be relaxed and whether the compatibility condition \eqref{re2}
is necessary for local well-posedness. Gong--Li--Liu--Zhang \cite{GLLZ20} and Huang \cite{H21} independently removed the initial compatibility condition \eqref{re2} and established local well-posedness for \eqref{NS}. Another important research concerns global strong or classical solutions with vacuum and large density oscillations.  Huang--Li--Xin \cite{HLX12} proved the global well-posedness of classical solutions to \eqref{NS} with small initial energy, allowing vacuum and large oscillations in the initial density. Hong--Hou--Peng--Zhu \cite{HHPZ2024} further generalized this direction by proving the global existence and
uniqueness of classical solutions with large initial energy in the nearly isothermal case, namely when the adiabatic exponent $\gamma$ is sufficiently close to $1$. We also refer to \cite{FZZ2018,WWZ14,HHW2019,ZLZ2020} for other types of large-solution results.

{\it We emphasize that the parabolic pressure scaling in \eqref{com} is not compatible with the polytropic pressure law $P=A\rho^{\gamma}$ considered in this paper.} For \eqref{NS} with the polytropic pressure law, the pressure gradient must have the same scaling order as the inertial and viscous terms in the momentum equation. This requirement fixes a specific critical scaling for the pressured system. Lei--Xin \cite{LX19} observed the following scaling invariant property for \eqref{NS}:
\begin{align}\label{pressure-scaling}
	\rho^{\ell}(x,t)
	\triangleq
	\ell^{\frac{2}{\gamma+1}}
	\rho\left(\ell x,\ell^{\frac{2\gamma}{\gamma+1}}t\right),
	\qquad
	u^{\ell}(x,t)
	\triangleq
	\ell^{\frac{\gamma-1}{\gamma+1}}
	u\left(\ell x,\ell^{\frac{2\gamma}{\gamma+1}}t\right),
\end{align}
where $\ell>0$ is the scaling parameter. The dependence of the scaling exponents on $\gamma$ reflects the nonlinear structure of the pressure law $P=A\rho^{\gamma}$. The verification of
this scaling is given in the Appendix. More recently, Wen\cite{W25} constructed a scaling-invariant initial quantity $N_{0}$ which is
invariant under the transformation \eqref{pressure-scaling} ( if $N_{0} \leq \varepsilon_{*}$ holds for some initial data $(\rho_{0},u_{0})$, then, the rescaled initial data $(\rho^{\ell}_{0},u^{\ell}_{0})$ defined by \eqref{pressure-scaling} at $t=0$ also satisfies $N_{0} \leq \varepsilon_{*}$), and proved the global existence of strong solutions to \eqref{NS} provided that this scaling-invariant initial quantity is sufficiently small.

\subsection{Pressureless compressible Navier--Stokes system}
The second model studied in this paper is the corresponding pressureless Navier--Stokes system. It can be formally derived from  system \eqref{NS} in the high Mach number regime, where the effect of the pressure term becomes negligible. To avoid ambiguity in notation, we denote the original density and velocity by $\varrho=\varrho(\tau,x)$ and $U=U(\tau,x)$, respectively. The compressible  Navier--Stokes equations in $\mathbb{R}^{3}$ as
\begin{align*}
	\left\{
	\begin{aligned}
		&\partial_{\tau}\varrho+\operatorname{div}(\varrho U)=0,\\[2mm]
		&\partial_{\tau}(\varrho U)
		+\operatorname{div}(\varrho U\otimes U)
		+\nabla P(\varrho)-\mu_{*}\Delta U-(\mu_{*}+\lambda_{*})\nabla\operatorname{div}U
		=0,
	\end{aligned}
	\right.
\end{align*}
where $P(\varrho)=A\varrho^{\gamma}$ and the viscosity coefficients satisfy $\mu_{*}>0$, $2\mu_{*}+3\lambda_{*}\geq 0$. Let $\varepsilon>0$ denote the Mach number, and the high Mach number regime corresponds to the limit $\varepsilon\to+\infty$. We introduce the rescaled variables
\begin{align*}
	\rho_{\varepsilon}(t,x)
	\triangleq\varrho\left(\frac{t}{\varepsilon},x\right),
	\quad
	u_{\varepsilon}(t,x)
	\triangleq\frac{1}{\varepsilon}
	U\left(\frac{t}{\varepsilon},x\right),\quad
	\mu_{\varepsilon}\triangleq\frac{\mu_{*}}{\varepsilon},
	\quad
	\lambda_{\varepsilon}\triangleq\frac{\lambda_{*}}{\varepsilon},
\end{align*}
and define
\begin{align*}
	\rho_{\varepsilon}\to \rho,\qquad
	u_{\varepsilon}\to u,
	\qquad
	\mu_{\varepsilon}\to \mu,\qquad
	\lambda_{\varepsilon}\to \lambda.
\end{align*}
By a direct calculation, we obtain
\begin{align*}
	\begin{cases}
		\partial_t\rho_{\varepsilon}
		+\operatorname{div}(\rho_{\varepsilon}u_{\varepsilon})=0,\\[2mm]
		\partial_t(\rho_{\varepsilon}u_{\varepsilon})
		+\operatorname{div}(\rho_{\varepsilon}u_{\varepsilon}\otimes u_{\varepsilon})
		+\frac{1}{\varepsilon^{2}}\nabla P(\rho_{\varepsilon})-\mu_{\varepsilon}\Delta u_{\varepsilon}
		-(\mu_{\varepsilon}+\lambda_{\varepsilon})
		\nabla\operatorname{div}u_{\varepsilon}
		=0.
	\end{cases}
\end{align*}
Now we let $\varepsilon\to+\infty$, the pressure term  vanishes in the high Mach number limit.
Passing formally to the limit in the above system, we obtain the pressureless Navier--Stokes system
\begin{align}\tag{PNS}\label{PNS}
	\left\{
	\begin{aligned}
		&\partial_{t}\rho+\operatorname{div}(\rho u)=0, \\[2mm]
		&\partial_{t}(\rho u)+\operatorname{div}(\rho u\otimes u)-\mu\Delta u-(\mu+\lambda)\nabla\operatorname{div}u
		=0.
	\end{aligned}
	\right.
\end{align}
Apart from its connection with the high Mach number limit, the system \eqref{PNS} has its own physical relevance. It is used to describe sticky particle dynamics, where particles stick together after collision and move with a common terminal velocity \cite{B94}. It also arises in models of dust or free-particle motion in astrophysics \cite{DD12}, multi-fluid systems \cite{B02}, and collective behavior such as traffic flow \cite{BDDR2008}, where an internal pressure force is not a natural modeling assumption.

The mathematical analysis of \eqref{PNS} has attracted increasing attention in
recent years, especially regarding critical regularity and scaling invariant
conditions. In the small-perturbation framework around a positive constant density, Danchin--Mucha--Tolksdorf \cite{DMT2021} proved the global well-posedness of strong solutions to \eqref{PNS}, assuming that the initial density is a small $L^{\infty}(\mathbb{R}^{3})$ perturbation of a positive constant and that the initial velocity belongs to suitable homogeneous Besov spaces. A notable feature of their result is that the smallness condition involves the quantity
$$
\|u_{0}\|_{\dot{B}_{5/2,1}^{6/5}(\mathbb{R}^{3})}^{1/3}
\|u_{0}\|_{\dot{B}_{10/7,1}^{3/5}(\mathbb{R}^{3})}^{2/3},
$$
which is invariant under the standard velocity scaling $u_0^\lambda(x)=\lambda u_0(\lambda x)$. In a related Sobolev setting, Guo--Tang--Zhao \cite{GTZ2025} established the global well-posedness and stability of classical solutions under smallness assumptions on the initial density perturbation and on the $L^{1}$ norm of the initial velocity. This direction was further developed by Li--Ni--Zhang \cite{LNZ2025}, who established the existence of global strong solutions under weaker critical regularity assumptions in homogeneous Besov spaces, and also proved uniform stability and optimal decay rates for a broader class of $u$. Beyond the small-perturbation regime, Xu \cite{X2022} proved the global well-posedness of \eqref{PNS} for initial data with large density in a multiplier space and sufficiently small velocity in critical Besov spaces.  In contrast to the perturbative results in \cite{DMT2021,GTZ2025}, Wang--Wu--Xu \cite{WWX2025} proved the existence of global Fujita--Kato solutions without imposing a small perturbation assumption on the initial density. Their result allows large variations of the density and initial velocities with critical regularity.

\textit{Although \eqref{NS} and \eqref{PNS} have similar viscous structures, the absence of the pressure gradient in \eqref{PNS} leads to a different invariant structure.} This difference is one of the main motivations of the present work. Once the pressure term is ignored, the pressureless system \eqref{PNS} admits the one-parameter family of invariant scalings
\begin{align}\label{pressureless-scaling}
	\rho^{\ell}(x,t)=\ell^{\tau}\rho(\ell x,\ell^{2-\tau}t),
	\qquad
	u^{\ell}(x,t)=\ell^{1-\tau}u(\ell x,\ell^{2-\tau}t), \qquad \text{for  all }~~ \tau\in\mathbb R.
\end{align}
Thus, the pressure term selects a fixed critical scaling for pressure  model \eqref{NS}, whereas the pressureless model \eqref{PNS} retains a larger scaling freedom. This distinction naturally leads to different scaling-invariant  conditions for the two systems and plays an important role in the global well-posedness analysis carried out in this paper.

\subsection{Motivation}
The motivation of this paper comes from the following observations.

\begin{itemize}
	\item The first observation concerns the relation between critical-space theory	 and energy methods with vacuum. Critical-space results reveal the natural scaling of the equations and provide a sharp functional framework for well-posedness. However, these results are usually formulated as small
	perturbations of a constant equilibrium state and rely on Besov-type regularity. On the other hand, energy methods are more suitable for vacuum and large density oscillations problem, but the scaling structure of the corresponding assumption is often not explicit. In this paper, we try to connect these two viewpoints.
	
	\item The second observation is that the pressure term changes the critical scaling structure of the system. For pressure  system \eqref{NS}, the pressure gradient has to have the same scaling order as the inertial and viscous terms in the momentum equation. This balance fixes the critical scaling \eqref{pressure-scaling}. In contrast, once the pressure term is ignored, the pressureless system \eqref{PNS} admits the one-parameter family of invariant scalings \eqref{pressureless-scaling}. Therefore, the difference between the two models is not merely formal. It directly affects the construction of scaling-invariant quantities used in 	the global-in-time estimates.
\end{itemize}	
 Based on these observations, the main purpose of this paper is to establish global well-posedness of strong solutions for both the pressured and pressureless systems under smallness assumptions which are exactly invariant under their corresponding critical scalings.

\subsection{Main results}
Before stating our results, we introduce some notation that will be used
frequently throughout the paper. For an integer $k\geq0$ and
$1\leq q\leq\infty$, we define
\begin{gather*}
	D^{k,q}(\mathbb{R}^{3}) = \{ u \in L^1_{\loc}(\mathbb{R}^{3}) \mid \|\nabla^k u\|_{L^q} < \infty \}, \ \|u\|_{D^{k,q}} = \|\nabla^k u\|_{L^{q}(\mathbb{R}^{3})},\ \|  u \|_{L^{q}}=\|u \|_{L^{q}(\mathbb{R}^{3})}, \\
	W^{k,q}(\mathbb{R}^{3}) = L^q(\mathbb{R}^{3}) \cap D^{k,q}(\mathbb{R}^{3}), \ D^k(\mathbb{R}^{3}) = D^{k,2}(\mathbb{R}^{3}), \ H^k(\mathbb{R}^{3}) = W^{k,2}(\mathbb{R}^{3}).
\end{gather*}
For simplicity, we take $A=1$ in the pressure law
$P=A\rho^{\gamma}$. We also use the shorthand
notation
\begin{align*}
	\int \cdot \, d x \triangleq \int_{\mathbb{R}^{3}} \cdot \, d x,\quad  \dot{u}\triangleq u_t+u\cdot\nabla u.
\end{align*}

The strong solutions to be established in this paper are defined as follows.

\begin{definition}\label{def}
	Let $T>0$ and $q\in(3,6)$. Assume that the initial data
	$(\rho_0\geq 0,u_0)$ satisfies
	\begin{align}\label{sou}
		\left( \rho_{0}-\rho_{\infty}\right) \in   H^{1}\cap W^{1,q},\qquad u_{0}\in D^{1}.
	\end{align}
	A pair $(\rho,u)$ is called a strong solution to system  \eqref{NS}  or \eqref{PNS}  in $\mathbb{R}^{3}\times[0,T]$ if it satisfies the following regularity properties:
	\begin{align}\label{defs}
		\begin{cases}
			(\rho-\rho_{\infty})\in C\big([0,T];L^{2}\big)
			\cap L^{\infty}\big(0,T;H^{1}\cap W^{1,q}\big),
			\quad
			\rho_{t}\in L^{\infty}\big(0,T;L^{2}\big), \\
			\rho u\in C\big([0,T];L^{2}\big),\quad
			u\in L^{\infty}\big(0,T;D^{1}\big)
			\cap L^{2}\big(0,T;D^{2}\big),
			\quad
			\sqrt{\rho} u_{t}\in L^{2}\big(0,T;L^{2}\big), \\
			\sqrt{t} u\in L^{\infty}\big(0,T;D^{2}\big)
			\cap L^{2}\big(0,T;D^{2,q}\big),
			\quad
			\sqrt{t} u_{t}\in L^{2}\big(0,T;D^{1}\big).
		\end{cases}
	\end{align}
	Moreover, $(\rho,u)$ satisfies \eqref{NS}  or \eqref{PNS}   a.e. in $\mathbb{R}^{3}\times(0,T]$. In particular, the strong solution  $(\rho, u )$  of \eqref{NS}  or \eqref{PNS}  is called the global strong solution, if the strong solution satisfies \eqref{defs} for any  $T>0$, and satisfies \eqref{NS}  or \eqref{PNS}  a.e. in  $\mathbb{R}^{3} \times(0, T)$.
\end{definition}

\subsubsection{The pressureless system}
First, we consider the Cauchy problem for the pressureless system \eqref{PNS},
supplemented with the initial data
\begin{equation}\label{a2}
	(\rho,\rho u)(0,x)=\bigl(\rho_{0},\rho_{0}u_{0}\bigr)(x),
	\quad x\in\mathbb{R}^{3},
\end{equation}
and the far-field condition
\begin{equation}\label{a3}
	\lim_{|x|\rightarrow\infty}(\rho,u)(t,x)=(\rho_{\infty},0),
\end{equation}
where $\rho_{\infty}\geq 0$ is a given constant. Now we are in a position to state our first result.
\begin{theorem}\label{th1}
Let $q\in(3,6)$ and assume that the initial data $(\rho_{0}\geq 0,u_{0})$ satisfies
	\begin{align}\label{xz}
		(\rho_{0}-\rho_{\infty})\in \left\{\begin{array}{ll}
		L^{\frac{3}{2}} \cap H^{1} \cap W^{1,q},&\rho_{\infty}=0,\\
	  H^{1} \cap W^{1,q},&\rho_{\infty}>0,
	\end{array}\right.	
	\quad   u_{0} \in D^{1}.
	\end{align}
 There exists a positive constant $\varepsilon_{0}$ depending only on the parameters $\mu$ and $\lambda$ such that if
	\begin{align}\label{csca1}
	 \|\rho_{0} \|_{L^{\infty}}^{3} \| \sqrt{\rho_{0}}u_{0}\|_{L^{2}}^{2}\|\nabla u_{0}\|_{L^{2}}^{2} \leq \varepsilon_{0},
	\end{align}
	then the Cauchy problem \eqref{PNS}, \eqref{a2}, and \eqref{a3} admits  a unique global strong solution $(\rho,u )$.
	
Moreover, in the case $\rho_{\infty}=0$, there exist positive constants $C$ and
$\alpha$ independent of $t$ such that, for all $t\geq 1$,
	\begin{align}\label{cexp}
		&\|\rho_{t} \|_{L^{2}}^{2}+\|\sqrt{\rho} u\|_{L^{2}}^{2} +\|\nabla u\|_{H^{1}}^{2} + \|\sqrt{\rho} \dot{u} \|_{L^{2}}^{2} \leq C e^{-\alpha t}.
	\end{align}
\end{theorem}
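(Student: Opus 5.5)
The plan is the standard strategy of local existence, uniform a priori estimates under a bootstrap hypothesis, and continuation. Local existence and uniqueness of a strong solution in the class \eqref{defs} for data satisfying \eqref{xz} is taken from the Gong--Li--Liu--Zhang / Huang theory without compatibility condition, adapted to \eqref{PNS}; the pressureless case is simpler since $\nabla P$ is absent. Set $M\triangleq\|\rho_0\|_{L^\infty}$, $E_0\triangleq\|\sqrt{\rho_0}u_0\|_{L^2}^2$, $D_0\triangleq\|\nabla u_0\|_{L^2}^2$. The key observation is that $M^3E_0D_0$ is invariant under \eqref{pressureless-scaling} for every $\tau$:
\[
M\mapsto \ell^{\tau}M,\qquad E_0\mapsto \ell^{\tau+2(1-\tau)-3}E_0=\ell^{-1-\tau}E_0,\qquad D_0\mapsto \ell^{2(1-\tau)+2-3}D_0=\ell^{1-2\tau}D_0 .
\]
The total exponent is $3\tau-1-\tau+1-2\tau=0$, so all estimates should be organized so that only this product appears.

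\textbf{A priori assumption.} On $[0,T]$ I assume $\sup_t\|\rho\|_{L^\infty}\le 2M$ and $A(T)\triangleq\sup_t\|\nabla u\|_{L^2}^2+\int_0^T\|\sqrt\rho\dot u\|_{L^2}^2\,dt\le 2D_0$. The steps are as follows.

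(i) The energy identity gives $\sup_t\|\sqrt\rho u\|_{L^2}^2+2\int_0^T(\mu\|\nabla u\|_{L^2}^2+(\mu+\lambda)\|\div u\|_{L^2}^2)\,dt\le E_0$.

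(ii) Test the momentum equation by $\dot u$. Since there is no pressure, the effective viscous flux is $F=(2\mu+\lambda)\div u$, and $\mathcal L u=\rho\dot u$ gives $\|\nabla^2u\|_{L^2}\le C\|\rho\dot u\|_{L^2}\le C\sqrt{M}\|\sqrt\rho\dot u\|_{L^2}$. The cubic term $\int|\nabla u|^3$ is bounded via Gagliardo--Nirenberg by $C\|\nabla u\|_{L^2}^{3/2}\|\nabla^2u\|_{L^2}^{3/2}$, and Young then gives
\[
\frac{d}{dt}\|\nabla u\|_{L^2}^2+\|\sqrt\rho\dot u\|_{L^2}^2\le CM^3\|\nabla u\|_{L^2}^6 .
\]
Writing $\|\nabla u\|_{L^2}^6\le \|\nabla u\|^2_{L^2}\cdot(2D_0)^2$ and using (i), a Gronwall argument yields $A(T)\le D_0\exp(CM^3E_0D_0)\le \tfrac32D_0$ once $\varepsilon_0$ is small. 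This gives exactly the invariant product.

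(iii) The density bound is the main obstacle. I would write $\frac{d}{dt}\log\rho(X(t))=-\div u=-F/(2\mu+\lambda)$, so that $\|\rho\|_{L^\infty}\le M\exp\bigl(C\int_0^T\|\div u\|_{L^\infty}dt\bigr)$, and I need $\int_0^T\|\nabla u\|_{L^\infty}\,dt$ to be small in terms of $M^3E_0D_0$ alone. The first attempt is to split $[0,1\wedge\sigma]$ and $[\sigma,T]$ with a scale-adapted time $\sigma$ (e.g. $\sigma\sim E_0/(MD_0)$, invariant in the right way) and use time-weighted estimates. Testing the $\dot u$-equation by $t\dot u$ gives $\sup_t t\|\sqrt\rho\dot u\|_{L^2}^2+\int t\|\nabla\dot u\|_{L^2}^2\le C(\ldots)$. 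Then $\|\nabla u\|_{L^\infty}\le C\|\nabla u\|_{L^2}^{1/4}\|\nabla^2 u\|_{L^6}^{3/4}$ with $\|\nabla^2u\|_{L^6}\le C\|\rho\dot u\|_{L^6}\le CM\|\nabla\dot u\|_{L^2}$, followed by H\"older in time with weights $t^{\pm}$. One must check that each piece is a positive power of $M^3E_0D_0$ by dimensional analysis; this is where the choice of weights must be tuned. An alternative is to avoid $L^\infty$ of $\nabla u$ by estimating $\int_0^T\|F\|_{L^\infty}$ through $\|F\|_{L^\infty}\le C\|\nabla F\|_{L^2}^{1/2}\|\nabla F\|_{L^6}^{1/2}$. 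The $L^{3/2}$ assumption on $\rho_0$ when $\rho_\infty=0$ would enter here, to control $\|\rho\dot u\|$ in lower $L^p$ norms or large-time integrability.

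(iv) Higher norms follow from standard estimates: $\nabla\rho\in L^2\cap L^q$ via $\|\nabla u\|_{L^\infty}\in L^1$, and the $\sqrt t$-weighted $D^2$, $D^{2,q}$ bounds, all now with $T$-dependent constants. The continuity argument closes the bootstrap and extends the local solution globally; uniqueness is inherited from the local theory.

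\textbf{Decay for $\rho_\infty=0$.} With $\rho\le 2M$ and $\rho\in L^{3/2}$ conserved in mass, $\|\sqrt\rho u\|_{L^2}^2\le\|\rho\|_{L^{3/2}}\|u\|_{L^6}^2\le C\|\nabla u\|_{L^2}^2$. The energy inequality then becomes $\frac{d}{dt}\|\sqrt\rho u\|_{L^2}^2+\alpha\|\sqrt\rho u\|_{L^2}^2\le0$, giving exponential decay. Feeding this into the differential inequality in (ii) and its $\dot u$-analogue, with $\|\nabla u\|_{L^2}^2$ small for $t\ge1$, gives the decay of $\|\nabla u\|_{L^2}$ and $\|\sqrt\rho\dot u\|_{L^2}$. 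The decay of $\|\nabla^2u\|_{L^2}$ follows by elliptic regularity, and that of $\|\rho_t\|_{L^2}\le \|u\|_{L^6}\|\nabla\rho\|_{L^3}+\|\rho\|_{L^\infty}\|\nabla u\|_{L^2}$ uses a uniform bound on $\|\nabla\rho\|_{L^3}$. That uniform bound requires $\int_0^\infty\|\nabla u\|_{L^\infty}<\infty$, which holds since this quantity decays exponentially.
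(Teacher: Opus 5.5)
Your steps (i), (ii), the continuation argument and the decay scheme essentially match the paper's Lemmas \ref{lem1}--\ref{co1}. One small fix: you differentiate $\mu\|\nabla u\|_{L^2}^2+(\mu+\lambda)\|\div u\|_{L^2}^2$, so you only get $A(T)\le C_*D_0(1+CM^3E_0D_0)$ with $C_*=C_*(\mu,\lambda)\ge 1$. The bootstrap should therefore read $A(T)\le 2C_*D_0$.

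The gap is step (iii). You correctly identify it as the crux, but you leave it as an unverified plan, and as stated it would not close.
\begin{itemize}
\item The time scale is wrong. $\sigma\sim E_0/(MD_0)$ scales like $\ell^{-2}$ under \eqref{pressureless-scaling}, whereas times scale like $\ell^{\tau-2}$. Inserting it leaves a factor $M^{\pm1/4}$ that $M^3E_0D_0$ does not control. The split must be at $\sigma=E_0/D_0$ (viscosity normalized), with no cutoff at $1$.
\item The $t$-weighted $\dot u$ estimate alone cannot handle $[\sigma,T]$. With only $\int t\|\nabla\dot u\|^2$, $\int\|\nabla u\|^2$ and $\sup t\|\nabla u\|^2$, the H\"older splitting leaves a non-integrable power of $t$ at infinity, e.g. $\int_\sigma^\infty t^{-3/4}dt$. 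You also need the $t^2$-weighted estimate.
\item The $L^{3/2}$ hypothesis cannot enter here, since \eqref{csca1} does not involve it. It is needed only for the decay.
\end{itemize}

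With these corrections your route does close. Under the bootstrap, $\|\nabla u\|_{L^4}^4\le CM^{3/2}\|\nabla u\|_{L^2}\|\sqrt\rho\dot u\|_{L^2}^3$. Hence the $\dot u$-inequality has Gronwall exponent $CM^{3/2}\int_0^T\|\nabla u\|_{L^2}\|\sqrt\rho\dot u\|_{L^2}dt\le C(M^3E_0D_0)^{1/2}$. The $t$-weighted form of (ii) has exponent $CM^3\int_0^T\|\nabla u\|_{L^2}^4dt\le CM^3E_0D_0$. Therefore
\[
\int_0^T t\|\nabla\dot u\|_{L^2}^2dt\le CD_0,\qquad
\int_0^T t^2\|\nabla\dot u\|_{L^2}^2dt\le C\int_0^T t\|\sqrt\rho\dot u\|_{L^2}^2dt\le CE_0 .
\]
Next use $\|\nabla u\|_{L^\infty}\le C\|\nabla u\|_{L^2}^{1/4}\|\nabla^2u\|_{L^6}^{3/4}\le CM^{3/4}\|\nabla u\|_{L^2}^{1/4}\|\nabla\dot u\|_{L^2}^{3/4}$. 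Apply H\"older with weight $t$ on $(0,\sigma)$, together with $\sup\|\nabla u\|_{L^2}^2\le CD_0$. On $(\sigma,T)$ use weight $t^2$, together with $\int\|\nabla u\|_{L^2}^2\le CE_0$. This gives
\[
\int_0^T\|\nabla u\|_{L^\infty}dt\le CM^{3/4}\bigl(D_0^{1/2}\sigma^{1/4}+E_0^{1/2}\sigma^{-1/4}\bigr)=C(M^3E_0D_0)^{1/4}\quad\text{for }\sigma=E_0/D_0 .
\]
Hence $\rho\le M\exp\bigl(C(M^3E_0D_0)^{1/4}\bigr)\le\frac32M$.

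The paper's density bound uses neither time weights nor $L^\infty$ control of $\nabla u$. Along trajectories it writes
\[
(2\mu+\lambda)\div u=-\frac{d}{dt}[(-\Delta)^{-1}\div(\rho u)]+[u_i,R_iR_j](\rho u_j).
\]
It bounds the first term in $L^\infty$ by $C(M^3E_0D_0)^{1/4}$. It bounds the time integral of the commutator, via Coifman--Meyer, by $C(M^3E_0D_0)^{1/2}$, using only (i)--(ii). This yields only an upper bound on $\rho$. The bound on $\int_0^T\|\nabla u\|_{L^\infty}dt$ is recovered afterwards through the logarithmic inequality of Lemma \ref{cinf}.

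Your route needs the $t$- and $t^2$-weighted estimates, with explicit $M$-dependence, inside the bootstrap. In return it gives:
\begin{itemize}
\item a scaling-invariant Lipschitz bound $\int_0^\infty\|\nabla u\|_{L^\infty}dt\le C(M^3E_0D_0)^{1/4}$;
\item two-sided control of $\rho$ along trajectories;
\item no need for either the commutator estimate or Lemma \ref{cinf}.
\end{itemize}
It also directly supplies the uniform bounds your decay step needs. First, $\|\rho(t)\|_{L^{3/2}}$ is not conserved, but it is bounded by $\|\rho_0\|_{L^{3/2}}\exp\bigl(C\int_0^\infty\|\div u\|_{L^\infty}dt\bigr)$. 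Second, the same bound gives a $T$-independent control of $\|\nabla\rho\|_{L^2\cap L^q}$, which the decay of $\|\rho_t\|_{L^2}$ requires; your step (iv) only claims $T$-dependent constants.
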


\begin{remark}
By Definition \ref{def}, \eqref{xz}, and Sobolev's inequality, one has
$\rho_{0}u_{0}\in L^{2}$. Thus, the initial momentum is well-defined even when
vacuum is allowed. It should also be emphasized that the constant
$\varepsilon_{0}$ appearing in the smallness condition \eqref{csca1} is
independent of the initial data, and that the condition itself is compatible
with the scaling invariant structure of the pressureless system.
\end{remark}

\begin{remark}
We next explain the scaling invariance of the smallness condition
\eqref{csca1}. For the pressureless system with the far-field state
$\rho_{\infty}>0$, the scaling must preserve the non-zero constant state at
infinity. This corresponds to choosing the scaling exponent $\tau=0$ in
\eqref{pressureless-scaling}. In this case, the initial data are transformed as
\begin{align*}
	\rho_0^\ell(x)=\rho_0(\ell x),
	\qquad
	u_0^\ell(x)=\ell u_0(\ell x).
\end{align*}
Since the coefficient multiplying $\rho$ is of zeroth order, we can deal with non-vacuum far-field state. Therefore, in the case $\rho_{\infty}>0$, the Cauchy problem
\eqref{PNS}, \eqref{a2}, and \eqref{a3} is invariant under the scaling
\begin{align*}
	\rho^{\ell}(x,t)= \rho(\ell x,\ell^{2 }t),
	\qquad
	u^{\ell}(x,t)=\ell u(\ell x,\ell^{2 }t).
\end{align*}

In particular,  for far-field vacuum case, the smallness condition \eqref{csca1} is invariant for  all $\tau\in\mathbb R$. More exactly, the initial data are transformed as
\begin{align*}
	\rho_0^\ell(x)=\ell^{\tau}\rho_0(\ell x),
	\qquad
	u_0^\ell(x)=\ell^{\,1-\tau}u_0(\ell x).
\end{align*}
Then
\begin{align*}
	\|\rho_0^\ell\|_{L^\infty}^{3}
	\|\sqrt{\rho_0^\ell}u_0^\ell\|_{L^2}^{2}
	\|\nabla u_0^\ell\|_{L^2}^{2}
	&=\ell^{3\tau}\ell^{-1-\tau}\ell^{1-2\tau}
	\|\rho_0\|_{L^\infty}^{3}
	\|\sqrt{\rho_0}u_0\|_{L^2}^{2}
	\|\nabla u_0\|_{L^2}^{2}\\
	&=
	\|\rho_0\|_{L^\infty}^{3}
	\|\sqrt{\rho_0}u_0\|_{L^2}^{2}
	\|\nabla u_0\|_{L^2}^{2}.
\end{align*}
Hence the smallness condition \eqref{csca1} is invariant under the whole
pressureless scaling family when $\rho_{\infty}=0$.
\end{remark}

\begin{remark}
It is worth emphasizing that the smallness assumption is imposed on the
scaling invariant product in \eqref{csca1}. Thus, one of the factors may be large provided that the other factors compensate for it. For instance, the initial kinetic energy
$\|\sqrt{\rho_{0}}u_{0}\|_{L^{2}}^{2}$ may be large if
$\|\rho_{0}\|_{L^{\infty}}$ or $\|\nabla u_{0}\|_{L^{2}}$ is sufficiently small.
In this sense, our theorem includes a class of large solutions. Of course, this is still far from
establishing global solutions for general large initial data.
\end{remark}

\begin{remark}
Compared with the global well-posedness results in
\cite{GTZ2025,DMT2021,WWX2025,LNZ2025}, where perturbations around non-vacuum
states were considered in critical Besov spaces, our result allows the initial
density to contain vacuum and large oscillations. Compared with the classical
results on large density oscillations and vacuum for \eqref{NS}, such as \cite{HLX12}, our pressureless result gives not only global existence and uniqueness, but also the exponential decay
estimate \eqref{cexp}. This indicates that the
presence or absence of the pressure term has a significant influence on the
dynamical behavior and stability properties of compressible flows.
\end{remark}

\begin{remark}
	The main limitation of our approach is that it does not directly apply to	general domains with boundaries, since such domains are usually not preserved under the scaling. Nevertheless, the half-space case is still compatible with our argument. Similar boundary issues have been studied for the inhomogeneous incompressible
	Navier--Stokes equations; see \cite{DM2009,DZ2014}. Indeed, in the half-space
	\begin{align*}
	\mathbb{R}^{3}_{+}=\{x=(x_1,x_2,x_3)\in\mathbb{R}^{3}:x_3>0\},
\end{align*}
the pressureless scaling preserves both the domain and its boundary, namely
	\begin{align*}
	\ell\mathbb{R}^{3}_{+}=\mathbb{R}^{3}_{+},
	\qquad
	\ell\partial\mathbb{R}^{3}_{+}=\partial\mathbb{R}^{3}_{+}.
\end{align*}
	Consequently, if $u|_{\partial\mathbb{R}^{3}_{+}}=0$, then also $u^\ell|_{\partial\mathbb{R}^{3}_{+}}=0$.
	Thus, in the half-space, the above scaling \eqref{pressureless-scaling} remains compatible with the pressureless system under the no-slip boundary condition. This problem will be addressed in a forthcoming work.
\end{remark}

\subsubsection{The pressure system}
The system  \eqref{NS}   is supplemented with the initial data
\begin{align}\label{ca2}
	(\rho,\rho u)(0,x)=\bigl(\rho_{0},\rho_{0}u_{0}\bigr)(x),
	\qquad x\in\mathbb{R}^{3},
\end{align}
and the far-field condition
\begin{align}\label{ca3}
	\lim_{|x|\rightarrow\infty}(\rho,u)(t,x)=(0,0).
\end{align}
We are in a position to state our second main result.
\begin{theorem}\label{th2}
 Let  $q \in(3,6)$  and assume that the initial data $(\rho_{0}\ge 0,u_{0})$ satisfies
	\begin{align}\label{ini}
		 \rho_{0} \in L^{\gamma} \cap H^{1} \cap W^{1,q}, \quad u_{0} \in D^{1}.
	\end{align}
	There exists a small positive constant $\varepsilon_{0}^{\prime}$ depending only on the parameters $\mu$ and $\lambda$ such that if
\begin{align}\label{sca1}
	&\bar{\rho}^{3}
	\left(
	\frac{1}{2}\|\sqrt{\rho_{0}}u_{0}\|_{L^{2}}^{2}
	+\frac{1}{\gamma-1}\|P_{0}\|_{L^{1}}
	\right)
	\left(
	\|\nabla u_{0}\|_{L^{2}}^{2}
	+\|P_{0}\|_{L^{2}}^{2}
	\right) \notag\\
	& \times
	\left[
	1+\bar{\rho}^{3+\gamma}
	\left(
	\frac{1}{2}\|\sqrt{\rho_{0}}u_{0}\|_{L^{2}}^{2}
	+\frac{1}{\gamma-1}\|P_{0}\|_{L^{1}}
	\right)^{2}
	\right]
	\leq \varepsilon_{0}^{\prime},
\end{align}
where  $\bar{\rho}\triangleq \|\rho_{0} \|_{L^{\infty}}$, then the Cauchy problem \eqref{NS}, \eqref{ca2}, and \eqref{ca3} admits a unique global strong solution $(\rho,u)$.
\end{theorem}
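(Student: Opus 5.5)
We prove Theorem \ref{th2} by a continuity (bootstrap) argument built on the local well-posedness of strong solutions without compatibility conditions (as in \cite{GLLZ20,H21}, adapted to data with $u_0\in D^1$ only). The core is a uniform a priori estimate. Let $E_0\triangleq \frac12\|\sqrt{\rho_0}u_0\|_{L^2}^2+\frac{1}{\gamma-1}\|P_0\|_{L^1}$ and $\bar\rho=\|\rho_0\|_{L^\infty}$. For a strong solution on $[0,T]$ we assume
\begin{align*}
\sup_{0\le t\le T}\|\rho\|_{L^\infty}\le 2\bar\rho,\qquad
\bar\rho^{3}E_0\Big(\sup_{0\le t\le T}\big(\|\nabla u\|_{L^2}^2+\|P\|_{L^2}^2\big)+1\cdot\text{(dissipation)}\Big)\le 2\,\Xi_0 ,
\end{align*}
where $\Xi_0$ is the left-hand side of \eqref{sca1} without the bracket. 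We then show that both bounds in fact hold with constants $\frac32$, provided \eqref{sca1} holds with $\varepsilon_0'$ small. Every quantity is tracked in a form whose product is invariant under \eqref{pressure-scaling}. This ensures that the constants depend only on $\mu,\lambda$ and not on the size of the data.

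The key steps run as follows. (i) The basic energy identity gives $\sup_t E(t)+\int_0^T\mu\|\nabla u\|_{L^2}^2dt\le E_0$. (ii) We multiply the momentum equation by $u_t$ and use the effective viscous flux $F=(2\mu+\lambda)\div u-P$ together with the vorticity $\omega$. This yields a bound on $\sup_t(\|\nabla u\|_{L^2}^2+\|P\|_{L^2}^2)+\int\|\sqrt\rho\dot u\|_{L^2}^2$. The transport equation for $P$, namely $P_t+u\cdot\nabla P+\gamma P\div u=0$, supplies the $\|P\|_{L^2}^2$ part, and $\gamma P\div u$ is rewritten through $F$. The cubic terms $\int|\nabla u|^3$ are controlled by $\|\nabla u\|_{L^3}^3\lesssim \|\nabla u\|_{L^2}^{3/2}(\|\rho\|_{L^\infty}^{1/2}\|\sqrt\rho\dot u\|_{L^2}+\|P\|_{L^6})^{3/2}$. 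The interpolation $\|P\|_{L^6}$, $\|P\|_{L^3}\le \|P\|_{L^1}^{\theta}\|P\|_{L^\infty}^{1-\theta}$ then brings in the factor $\bar\rho^{\gamma}E_0$. (iii) We use the $t$-weighted estimate for $\sigma\|\sqrt\rho\dot u\|_{L^2}^2$ with $\sigma=\min\{1,t\}$, obtained by applying $\partial_t+\div(u\,\cdot)$ to the momentum equation in Hoff's manner. This is what compensates for the lack of $D^2$ regularity of $u_0$. (iv) The density upper bound comes from the Zlotnik-type inequality along particle paths applied to $\frac{D}{Dt}\log\rho=-\frac{1}{2\mu+\lambda}(F+P)$. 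There the pressure term has a good sign, and $\int_0^T\|F\|_{L^\infty}dt$ is bounded via $\|F\|_{L^\infty}\lesssim\|F\|_{L^6}^{1/2}\|\nabla F\|_{L^6}^{1/2}$ together with $\nabla F=\rho\dot u$-type bounds and the time weight. The extra bracket $1+\bar\rho^{3+\gamma}E_0^2$ in \eqref{sca1} is precisely what absorbs the pressure contributions in steps (ii)–(iv). We check that each term is a scaling-invariant monomial, using the Appendix verification of \eqref{pressure-scaling}.

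(v) After closing the bootstrap, the higher-order bounds in \eqref{defs} follow in the standard way, and they may depend on $T$. These are $\nabla\rho\in L^2\cap L^q$ via the Beale–Kato–Majda type logarithmic estimate, $\sqrt t\,u\in D^2\cap L^2D^{2,q}$, and $\sqrt t\,u_t\in L^2D^1$. Uniqueness follows from an $L^2$-energy estimate for the difference in Lagrangian coordinates, or in the weighted form of \cite{H21}. A standard continuation argument then gives global existence.

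The main obstacle is step (iv), combined with closing step (ii) using a constant independent of the data. Since only $u_0\in D^1$ is assumed, $\int_0^T\|F\|_{L^\infty}$ is not directly integrable near $t=0$. We therefore split $[0,\sigma(T)]$ from $[\sigma(T),T]$. On the short-time part we use $\int_0^1 t^{-1/2}\cdot t^{1/2}\|\nabla F\|$ with the weighted bound from (iii). For large times, integrability requires the energy dissipation to be paired with $\bar\rho^{3}E_0$ so that the exponents balance. We would first try to obtain a bound of the form $\int_0^T\|F\|_{L^\infty}dt\le C(\bar\rho^{3}E_0\cdot(\cdots))^{\beta}$ for some $\beta>0$ dictated by scaling, and then make it small via \eqref{sca1}.
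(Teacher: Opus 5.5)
Your outline (energy estimate, $u_t$-multiplier with $F$, Hoff-type weighted estimate, density bound along particle paths, log-BKM, continuation) has a reasonable architecture. But the step on which the theorem depends is not supplied. Step (iv) is the bound $\rho\le\frac32\bar\rho$ with a constant depending only on $\mu,\lambda$, closed through \eqref{sca1}. You leave it as ``we would first try to obtain a bound $\int_0^T\|F\|_{L^\infty}dt\le C(\bar\rho^3E_0\cdots)^\beta$''. Scaling only shows that $\int_0^T\|F\|_{L^\infty}dt$ is dimensionless, so any valid bound can be expressed through invariants. It does not show that a bound by a \emph{small} invariant exists.

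The tools you list also do not produce such a bound. The weight $\sigma=\min\{1,t\}$ gives only $\sup_t\sigma\|\sqrt\rho\dot u\|_{L^2}^2+\int_0^T\sigma\|\nabla\dot u\|_{L^2}^2dt$. With that, $\int_1^T\|F\|_{L^\infty}dt\lesssim\int_1^T\|\rho\dot u\|_{L^2}^{1/2}\|\rho\dot u\|_{L^6}^{1/2}dt$ is bounded only by $C(T-1)^{1/2}$. Large times need one of two things:
\begin{itemize}
\item Zlotnik's lemma with a small-slope condition $\int_{t_1}^{t_2}\|F\|_{L^\infty}dt\le N_0+N_1(t_2-t_1)$, $N_1\lesssim\bar\rho^{\gamma}$. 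This requires $\sigma^3$-weighted bounds on $\nabla\dot u$, which you never set up.
\item Decay weights $t$ and $t^2$ as in Lemmas \ref{lem51}--\ref{lem52}. These in turn need $\int_0^T\|P\|_{L^2}^2dt$, $\int_0^T\|P\|_{L^3}^3dt$ and the dissipation inequality \eqref{x2} for $p=2,3$, all inside the bootstrap with constants tracked in invariant form.
\end{itemize}
Your claim that the bracket $1+\bar\rho^{3+\gamma}E_0^2$ ``precisely absorbs'' the pressure terms is asserted, not derived. For example, $\|P\|_{L^3}\le\|P\|_{L^1}^{1/3}\|P\|_{L^\infty}^{2/3}$ produces $E_0^{1/3}\bar\rho^{2\gamma/3}$, not $\bar\rho^{\gamma}E_0$.

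There is also a formulation problem in the bootstrap. After dividing by $\bar\rho^3E_0$, your second hypothesis reads $\sup_t(\|\nabla u\|_{L^2}^2+\|P\|_{L^2}^2)\le 2(\|\nabla u_0\|_{L^2}^2+\|P_0\|_{L^2}^2)$. This cannot be improved to $\frac32$. The $u_t$-multiplier only yields a $\mu,\lambda$-dependent factor in front of the data, e.g. $(2\mu+\lambda)/\mu$ coming from $\mu\|\nabla u\|_{L^2}^2+(\mu+\lambda)\|\operatorname{div}u\|_{L^2}^2$, and this factor need not be below $\frac32$. You should bootstrap on $2\varepsilon$ with $c_0\varepsilon_0\le\varepsilon\le\varepsilon_1$, as in Proposition \ref{pro}.

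For comparison, the paper does not derive these lower-order bounds itself either. Proposition \ref{wen} imports from \cite{W25}, under \eqref{sca1}:
\begin{itemize}
\item the energy bound and $\sup_t\|\nabla u\|_{L^2}^2$;
\item $\int_0^T\|\sqrt\rho\dot u\|_{L^2}^2dt$;
\item the density bound $\|\rho\|_{L^\infty}\le C$;
\item $\int_0^T\|P\|_{L^3}^3dt$.
\end{itemize}
It proves only $\int_0^T\|P\|_{L^2}^2dt\le C$, using $P=(-\Delta)^{-1}\operatorname{div}(\rho u)_t+\cdots$ paired with the transport equation for $P$. It then uses the weights $t$ and $t^2$, coupled with \eqref{x2} for $p=2,3$, to get $T$-independent weighted bounds and hence \eqref{lll7}. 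After that, log-BKM and continuation go as in your step (v). To close the gap, either cite \cite{W25} for the density bound and the $T$-uniform lower-order estimates, or actually carry out step (iv). As written, the proposal does not prove the theorem.
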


\begin{remark}
The initial quantity in \eqref{sca1} is scaling invariant for \eqref{pressure-scaling}. For pressure system \eqref{NS}, the scaling structure is more
delicate, since the pressure law have to be taken into account
simultaneously.  Even if the pressure-related terms are ignored from \eqref{sca1}, the remaining part still does not match \eqref{csca1}. This naturally raises the question whether there exists a simpler scaling invariant quantity adapted to the pressure system \eqref{NS}.
\end{remark}

\begin{remark}
	The scaling invariant property \eqref{pressure-scaling} contains the factor
	$\ell^{\frac{2}{\gamma+1}}$ in front of the density. Hence, this scaling is not
	compatible with non-vacuum	far-field case.
\end{remark}

\begin{remark}
	Under the low-regularity assumption \eqref{ini} on the initial data, the local
	well-posedness theory was established in \cite{H21,GLLZ20}. Theorem
	\ref{th2} builds on this theory and extends the corresponding local strong
	solution to a global one. In	comparison with the global well-posedness result obtained by Wen \cite{W25}	under the higher-regularity assumptions \eqref{re1} and \eqref{re2}, our result is proved under
	lower regularity assumption \eqref{ini}.
\end{remark}

\begin{remark}
	For the pressureless system \eqref{PNS}, the scaling admits a one-parameter family
	\begin{align*}
	\rho^\ell(x,t)=\ell^\tau\rho(\ell x,\ell^{2-\tau}t),
	\qquad
	u^\ell(x,t)=\ell^{1-\tau}u(\ell x,\ell^{2-\tau}t).
\end{align*}
	When the barotropic pressure $P(\rho)= \rho^\gamma$ is included, the pressure term fixes the free parameter as
	\begin{align*}
	\tau=\frac{2}{\gamma+1}.
\end{align*}
	Hence, the scaling of the barotropic system \eqref{NS} can be regarded as a special case of the pressureless \eqref{PNS} scaling family.
\end{remark}

\subsection{Strategy of the proofs}

We briefly explain the main ideas of the proofs. The proofs are based on a
combination of scaling invariant lower-order estimates, time weighted
higher-order estimates, and a standard continuation argument. The main
difficulty is to keep the smallness assumptions consistent with the intrinsic
scaling structures of the corresponding systems, while allowing vacuum and
large oscillations of the initial density.

For the lower-order estimates, we use a bootstrap argument (see Proposition \ref{pro}). The key point is
to prove estimates which are uniform in time and depend only on the
scaling invariant quantities of the initial data. In the pressureless system \eqref{PNS}, the
key quantity is
	\begin{align}\label{key}
\bar{\rho}^{3}\cdot\|\sqrt{\rho}u\|_{L^{2}}^{2}
\cdot\|\nabla u\|_{L^{2}}^{2}.
\end{align}
 From the basic energy estimate, we immediately derive the bound for $L_{t}^{\infty}L_{x}^{2}$-norm of $\sqrt{\rho}u$.
To obtain the estimate  $L_{t}^{\infty}L_{x}^{2}$-norm of $\nabla u$ (see Lemma \ref{co1}), we using the smallness of
\begin{align}\label{key0}
\bar{\rho}^{3}\|\nabla u\|_{L^{2}}^{2}\int_{0}^{T}\|\nabla u\|_{L^{2}}^{2}\,dt,
\end{align}
which was constructed in \eqref{cp1}. Then, using the smallness of \eqref{key}  (see \eqref{cs4}), the upper bound of the density is
derived by combining the particle-trajectory argument with a commutator
estimate (see Lemma \ref{rho}). In particular, multiplying \eqref{X0} by $\bar{\rho}^{3}$ and combine with \eqref{xx0}, we construct a complete scaling invariant  initial quantity $\|\rho_{0} \|_{L^{\infty}}^{3} \| \sqrt{\rho_{0}}u_{0}\|_{L^{2}}^{2}\|\nabla u_{0}\|_{L^{2}}^{2}$ and close the estimates \eqref{key}, \eqref{key0}. These estimates in Lemmas \ref{rho} and \ref{sca} complete the bootstrap argument  when the condition \eqref{csca1} holds.

Once the lower-order estimates are obtained, we establish  time-weighted higher-order estimates. These estimates are used to justify the time continuity of the strong solution and to guarantee that the local solution can be extended.  The logarithmic Beale--Kato--Majda type inequality (see Lemma \ref{cinf}) plays an important role in controlling
\begin{align}\label{key2}
\int_{0}^{T}\|\nabla u\|_{L^{\infty}}\,dt,
\end{align}
which is needed for the propagation of the $H^{1}\cap W^{1,q}$ regularity of
the density. In order to estimate $\eqref{key2}$, we need to establish time-weighted estimates for $\| \sqrt{\rho} \dot{u} \|_{L^{2}}^{2}$ and $\| \nabla \dot{u} \|_{L^{2}}^{2}$ (see \eqref{clll6}, \eqref{clll7}). Then, the required time-weighted and higher-order estimates for the velocity field are established in Lemma \ref{clem53}. Finally, we apply a continuation argument to prove the strong solution exists globally in time.

For the  pressure system \eqref{NS}, additional estimates
are needed to handle the pressure term. Applying $(-\Delta)^{-1}\operatorname{div}$
to the momentum equation  gives the identity
\begin{align*}
	P
	=
	(-\Delta)^{-1}\operatorname{div}(\rho u)_{t}
	+(-\Delta)^{-1}\operatorname{div}\operatorname{div}(\rho u\otimes u)
	+(2\mu+\lambda)\operatorname{div}u.
\end{align*}
Together with the a \textit{priori} estimates established in \cite{W25}, this identity
allows us to obtain the $L_{t}^{2}L_{x}^{2}$ estimate of $P$; see Proposition
\ref{wen}. Moreover, by mass conservation equation, the pressure satisfies a transport equation
\begin{align*}
	P_t+u\cdot \nabla P+\gamma P \operatorname{div} u=0.
\end{align*}
Combining this equation with the \textit{effective viscous flux}
\begin{align*}
F\triangleq(2\mu+\lambda)\operatorname{div}u-P,
\end{align*}
we obtain, for $p\geq2$,
\begin{align*}
	\frac{d}{dt}\|P\|_{L^{p}}^{p}
	+\frac{p\gamma-1}{2(2\mu+\lambda)}
	\|P\|_{L^{p+1}}^{p+1}
	\leq C\|F\|_{L^{p+1}}^{p+1}.
\end{align*}
This inequality is crucial for controlling the higher integrability of the
pressure and for closing the time-weighted estimates involving
$\sqrt{\rho}\dot u$ and $\nabla\dot u$ (see Lemmas \ref{lem51}, \ref{lem52}). Once these estimates are obtained, the
same continuation argument as in the pressureless case yields the global
existence of strong solutions for the pressured system.

The rest of the paper is organized as follows. Section \ref{sec2} collects some
known facts and auxiliary inequalities used throughout the paper. Section
\ref{sec3} is devoted to the proof of Theorem \ref{th1}; the scaling-invariant
estimates and the time-weighted estimates are established in Subsections
\ref{sub1} and \ref{sub2}, respectively, and the proof is completed in
Subsection \ref{sub3}. Section \ref{sec4} proves Theorem \ref{th2} by following
a similar strategy.

\section{Preliminaries}\label{sec2}
In this section, we collect some known facts and auxiliary inequalities that will be used frequently in the sequel. We begin with the local well-posedness of strong solutions (see \cite[Theorem 1.1]{GLLZ20}).
\begin{proposition}[Local well-posedness]\label{local} Assume that the initial data $(\rho_{0},u_{0})$ satisfies \eqref{sou}. Then there exists a time $T_{0}>0$ such that the problem \eqref{NS}, \eqref{ca2}, and \eqref{ca3} (or problem
	\eqref{PNS}, \eqref{a2}, and \eqref{a3}) admits a unique strong solution in $\mathbb{R}^{3}\times(0,T_{0}]$. \end{proposition}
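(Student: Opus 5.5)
Proposition \ref{local} is quoted from \cite[Theorem 1.1]{GLLZ20} (see also \cite{H21}), so I would not reprove it from scratch. The plan is to check that the hypotheses of the cited theorem hold in both settings, and in particular that the pressureless system \eqref{PNS} is covered.

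For \eqref{NS}, the assumption \eqref{sou} is exactly the low-regularity class of \cite{GLLZ20}: $\rho_0-\rho_\infty\in H^1\cap W^{1,q}$ with $q\in(3,6)$ and $u_0\in D^1$, with no compatibility condition. That theorem then yields a unique strong solution on some interval $(0,T_0]$ with the regularity \eqref{defs}.

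For \eqref{PNS}, one could formally set the pressure constant to zero. However, the results in \cite{GLLZ20,H21} are stated for $A>0$, so I would rerun their proof with $P\equiv0$ rather than rely on that limit. The scheme has four steps.

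\begin{enumerate}
\item Mollify the data and add $\delta>0$ to the density to remove vacuum; classical local theory then gives approximate solutions $(\rho^\delta,u^\delta)$.
\item Derive a priori bounds uniform in $\delta$ on a short interval, in the following order:
\begin{itemize}
\item the energy bound for $\sqrt\rho u$;
\item the $\nabla u$ bound from testing the momentum equation with $\dot u$, using $\|\nabla^2u\|_{L^2}\le C\|\rho\dot u\|_{L^2}$ from the Lamé system;
\item the $t$-weighted bounds on $\sqrt{t}\sqrt\rho\dot u$ and $\sqrt{t}\nabla\dot u$;
\item control of $\int_0^T\|\nabla u\|_{L^\infty}dt$ via $\|\nabla u\|_{L^\infty}\lesssim\|\nabla u\|_{L^2}^{\theta}\|\nabla^2u\|_{L^q}^{1-\theta}$, integrable near $t=0$ thanks to the time weight;
\item propagation of $\nabla\rho$ in $L^2\cap L^q$ by the transport equation.
\end{itemize}
Dropping $\nabla P$ only removes terms, so every estimate simplifies.
\item Pass to the limit by compactness.
\item Prove uniqueness by an $L^2$ energy estimate for the difference $(\rho_1-\rho_2,u_1-u_2)$. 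Since $\rho_1-\rho_2$ is only controlled in a negative or $L^{3/2}$-type norm, use the weighted argument of \cite{GLLZ20} with the factor $t$ and $\int_0^t\|\nabla u\|_{L^\infty}<\infty$.
\end{enumerate}

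The main obstacle is the lack of compatibility data: without \eqref{re2}, $\sqrt\rho\dot u$ is not bounded at $t=0$. This is why the solution is only built with $t$-weights, and why uniqueness requires the Lagrangian/weighted difference estimate rather than a direct Gronwall argument. In practice I would follow \cite{GLLZ20} verbatim, noting that each pressure term there is simply absent.
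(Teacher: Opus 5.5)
Your proposal matches the paper, which proves Proposition~\ref{local} only by citing \cite[Theorem 1.1]{GLLZ20} for both \eqref{NS} and \eqref{PNS}. Your observation that the pressureless case follows by rerunning that argument with $P\equiv0$ (in the short-time estimates the pressure is only a source term) makes explicit what the paper leaves implicit.

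One small caution. Under \eqref{sou} alone with $\rho_\infty=0$, $\sqrt{\rho_0}u_0$ need not lie in $L^2$; only $\rho_0u_0\in L^2$ is guaranteed, consistent with \eqref{defs}. So the basic energy bound in your Step 2 should be dropped. This is harmless, because the short-time estimate $\frac{d}{dt}\|\nabla u\|_{L^2}^2\le C\|\nabla u\|_{L^2}^6$ closes without it.
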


Next, we define the commutator \begin{align*} [b,R_iR_j](f) \triangleq bR_iR_j(f)-R_iR_j(bf), \qquad i,j=1,2,3, \end{align*} where $R_i$ denotes the usual Riesz transform on $\mathbb{R}^{3}$, namely $R_i=(-\Delta)^{-1/2}\partial_i$. We need the following commutator estimate (see \cite{CM1975}).

\begin{lemma}[Commutator estimate]\label{comm} Let $b\in D^{1,q}$ and $f\in L^{r}$. Assume that $p,q,r\in(1,\infty)$ satisfy
\begin{align*}
\frac{1}{p}=\frac{1}{q}+\frac{1}{r}.
\end{align*}
Then there exists a constant $C$ depending only on $p$, $q$, and $r$ such that
\begin{align*} \left\|\nabla[b,R_iR_j](f)\right\|_{L^{p}} \leq C\|\nabla b\|_{L^{q}}\|f\|_{L^{r}}.
\end{align*}
\end{lemma}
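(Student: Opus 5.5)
This is the classical Coifman--Meyer commutator bound, and the plan is to prove it by a Littlewood--Paley/paraproduct decomposition. Since $R_iR_j$ is a Fourier multiplier with symbol $m(\xi)=-\xi_i\xi_j/|\xi|^2$, homogeneous of degree $0$ and smooth away from the origin, I would write $T=R_iR_j$ and decompose both $bTf$ and $T(bf)$ with Bony's paraproduct:
\begin{align*}
[b,T]f=\bigl(T_b Tf-T(T_bf)\bigr)+\bigl(T_{Tf}b-T(T_fb)\bigr)+\bigl(\mathcal R(b,Tf)-T\mathcal R(b,f)\bigr).
\end{align*}
Here $T_bg=\sum_k S_{k-1}b\,\Delta_k g$ and $\mathcal R$ is the diagonal remainder. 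Each piece is then estimated after applying $\nabla$, and the goal is to make every term look like a bilinear expression in $(\nabla b,f)$ that is controlled in $L^p$ by $\|\nabla b\|_{L^q}\|f\|_{L^r}$ via H\"older and standard square-function and maximal-function bounds. Throughout, $\nabla$ of a piece supported at frequency $2^k$ costs a factor $2^k$.

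\emph{Low frequencies of $b$: the term $[T_b,T]f$.} This is the genuine commutator. For each $k$, the term $S_{k-1}b\,T\Delta_kf-T(S_{k-1}b\,\Delta_kf)$ is frequency-localized near $2^k$. Writing $T\tilde\Delta_k$ as convolution with the kernel $2^{3k}\check\phi_k(2^k\cdot)$ and using a first-order Taylor expansion of $S_{k-1}b$, one gets
\begin{align*}
\bigl|[S_{k-1}b,T\tilde\Delta_k]\Delta_kf\bigr|(x)\le C\,2^{-k}\,M(\nabla S_{k-1}b)(x)\,M(\Delta_kf)(x),
\end{align*}
where $M$ is the Hardy--Littlewood maximal function. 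After applying $\nabla$, the factors $2^{k}$ and $2^{-k}$ cancel. Summing over $k$ with the square function and Fefferman--Stein, this piece is bounded by $\|M\nabla b\|_{L^q}\,\|(\sum_k|M\Delta_kf|^2)^{1/2}\|_{L^r}\lesssim\|\nabla b\|_{L^q}\|f\|_{L^r}$.

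\emph{High frequencies of $b$: paraproducts and remainder.} In $\nabla T_{Tf}b=\sum_k\nabla(S_{k-1}Tf\,\Delta_kb)$, each summand sits at frequency $2^k$. I would use $\nabla$ there as $2^k$ times a bounded multiplier and write $2^k\Delta_kb=\tilde\Delta_k(\nabla b)$ up to a zeroth-order multiplier. This gives a paraproduct of $Tf\in L^r$ with $\nabla b\in L^q$, which is bounded in $L^p$ by the Coifman--Meyer paraproduct theorem, using $|S_{k-1}Tf|\le M(Tf)$ together with the square function of $\nabla b$. The term $\nabla T(T_fb)$ is handled the same way, since $T$ is bounded on $L^p$. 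For the remainder terms, $\sum_k\Delta_kb\,\tilde\Delta_k g$ with $g=f$ or $Tf$, the output frequency is at most $2^k$. After applying $\nabla$ I would move the derivative onto the $b$ factor: write $\nabla\sum_{j\le k+2}\Delta_j(\Delta_kb\,\tilde\Delta_kg)$ and use $\|\nabla\Delta_j\|\lesssim 2^j\le C2^k$, again converting $2^k\Delta_kb$ into $\tilde\Delta_k\nabla b$. Cauchy--Schwarz in $k$ and the square-function bounds then close this piece.

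The main obstacle is the remainder and high--low terms in the endpoint-flavored range. The derivative must always be transferred onto $b$ at the same frequency, never onto $f$, and the summation in $k$ must be organized so that only square functions (not $\ell^1$ sums) appear. If the Littlewood--Paley bookkeeping for $\nabla\mathcal R$ becomes awkward, I would fall back on the kernel representation
\begin{align*}
\partial_k[b,T]f(x)=\partial_kb(x)\,Tf(x)+\mathrm{p.v.}\!\int\partial_{x_k}K(x-y)\,\bigl(b(x)-b(y)\bigr)f(y)\,dy.
\end{align*}
The second term is a Calder\'on-type first commutator, and I would invoke the multilinear Coifman--Meyer theorem, which yields $L^q\times L^r\to L^p$ bounds with $\nabla b\in L^q$. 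The constant depends only on $p,q,r$, since the multiplier $m$ is fixed.
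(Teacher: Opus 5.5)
Your argument is correct, but it follows a genuinely different route from the paper. The paper gives no proof of this lemma; it quotes the estimate from Coifman--Meyer \cite{CM1975}, so it relies on the bilinear commutator theory as a black box. That is essentially your fallback via the kernel representation.

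Your main argument proves the bound from scratch with Bony's decomposition, using only square functions, the Fefferman--Stein inequality and the smooth degree-zero symbol of $R_iR_j$. It also shows where cancellation is really needed. This happens only in $[T_b,T]f$, where $\nabla$ would otherwise land on $f$ at frequency $2^k$, and the Taylor expansion of $S_{k-1}b$ supplies the compensating $2^{-k}\nabla S_{k-1}b$. In $T_{Tf}b$, $T(T_fb)$ and the two remainders, the frequency of $b$ dominates. There you simply move the derivative onto $b$ and bound each term separately, with no commutator structure. The citation buys brevity and generality (it covers arbitrary Calder\'on--Zygmund kernels); your route buys transparency about the mechanism.

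If you write it out, three points should be made explicit:
\begin{itemize}
\item[(i)] Taylor's formula evaluates $\nabla S_{k-1}b$ on the segment $[x,y]$, not at $x$. So the bound by $M(\nabla S_{k-1}b)(x)$ needs the Peetre-type estimate $|\nabla S_{k-1}b(z)|\le C(1+2^k|x-z|)^N M(\nabla S_{k-1}b)(x)$, which comes from frequency localization. The polynomial weight is then absorbed by the Schwartz decay of the kernel of $R_iR_j\tilde\Delta_k$.
\item[(ii)] In the remainder, the sum over $k\ge j-2$ closes only because of the factor $2^{j-k}$ coming from $\|\nabla\Delta_j\|\lesssim 2^j$. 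One uses Young's inequality for the discrete convolution in $\ell^2$, followed by Fefferman--Stein. This is exactly where the positive order of $\nabla$ is used.
\item[(iii)] $b\in D^{1,q}$ is defined only modulo constants, and for $q\ge 3$ the low-frequency series defining $S_{k-1}b$ need not converge. You should therefore work modulo constants, which $[b,R_iR_j]$ annihilates, or prove the estimate for $b\in C_c^\infty$ and extend by density.
\end{itemize}
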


Next, we recall the following Gagliardo--Nirenberg inequality (see \cite[Theorem 12.87]{book17}).
 \begin{lemma}[Gagliardo--Nirenberg inequality]\label{gn}
 	Let  $1 \leq p$, $q \leq \infty$, $m \in \mathbb{N}$, $k \in \mathbb{N}_{0}$, with  $0 \leq k<m$, and let  $a$, $r$  be such that
 	\begin{align*}
 		0 \leq a \leq 1-\frac{k}{m}
 	\end{align*}
 	and
 	\begin{align*}
 		 (1-a)\bigg(\frac{1}{p}-\frac{m-k}{3}\bigg)+a\bigg(\frac{1}{q}+\frac{k}{3}\bigg)=\frac{1}{r} \in[0,1].
 	\end{align*}
 	Then there exists a constant  $C=C(m, p, q, a, k)>0$  such that
 	\begin{align}\label{gn1}
 		\|\nabla^{k} f \|_{L^{r}} \leq C\|f\|_{L^{q}}^{a} \|\nabla^{m} f \|_{L^{p}}^{1-a}
 	\end{align}
 	for every  $f \in L^{q} (\mathbb{R}^{3} ) \cap D^{m, p} (\mathbb{R}^{3} )$, with the following exceptional cases:
 	\begin{itemize}
 		\item[(i)] If $k=0$, $mp<3$, and $q=\infty$, we assume that $f$ vanishes at infinity.
 		\item[(ii)] If $1<p<\infty$ and $m-k-\frac{3}{p}$ is a non-negative integer, then \eqref{gn1} holds only for $0<a\leq 1-\frac{k}{m}$.
 	\end{itemize}
 \end{lemma}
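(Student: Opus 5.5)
The statement just before this point is the Gagliardo--Nirenberg inequality (Lemma \ref{gn}), which the paper cites from \cite[Theorem 12.87]{book17}. It is a classical result. My plan is therefore to reduce it to a few standard building blocks rather than prove it from scratch.

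\textbf{Step 1: reduce to an endpoint.} First I check that the exponent relation is exactly what scaling forces. Replacing $f$ by $f(\delta\,\cdot)$, the left side scales like $\delta^{k-3/r}$. The right side scales like $\delta^{-3a/q}\,\delta^{(1-a)(m-3/p)}$. Equating the powers reproduces the displayed condition on $1/r$. I then treat two endpoint cases. For $a=0$, the inequality is the Sobolev-type bound $\|\nabla^k f\|_{L^r}\le C\|\nabla^m f\|_{L^p}$ with $1/r=1/p-(m-k)/3$. For $a=1-k/m$ with $p=q$ and $r$ determined, it is the classical interpolation between derivatives $\|\nabla^k f\|\lesssim\|f\|^{1-k/m}\|\nabla^m f\|^{k/m}$.

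\textbf{Step 2: establish the endpoints.} For the Sobolev endpoint I iterate the first-order estimate $\|g\|_{L^{p^*}}\le C\|\nabla g\|_{L^p}$, valid for $p<3$ and $g$ vanishing at infinity. This estimate comes from the Gagliardo product argument for $p=1$ and from applying it to $|g|^s$ for general $p$. Where $m-k-3/p\ge0$, Sobolev embedding into $L^r$ fails, and I would instead use Morrey's inequality or a Littlewood--Paley decomposition. For the derivative-interpolation endpoint with $m=2$, $k=1$, I use the one-dimensional inequality $\|g'\|_{L^r}^2\lesssim\|g\|_{L^q}\|g''\|_{L^p}$, proved by integration by parts, and then integrate in the remaining variables with H\"older's inequality. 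Induction on $m$ then gives the general case.

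\textbf{Step 3: interpolate between the endpoints.} For intermediate $a$, I apply H\"older's inequality (log-convexity of $L^r$ norms) to $\nabla^k f$ between the two endpoint exponents. This yields $\|\nabla^k f\|_{L^r}\le\|\nabla^k f\|_{L^{r_0}}^{\theta}\|\nabla^k f\|_{L^{r_1}}^{1-\theta}$, and each factor is then bounded by Step 2. Alternatively, I can write $f=\sum_j\Delta_j f$ in Littlewood--Paley pieces. Bernstein's inequalities give
\[
\|\nabla^k\Delta_j f\|_{L^r}\lesssim\min\left(2^{j(k+3/q-3/r)}\|f\|_{L^q},\ 2^{j(k-m+3/p-3/r)}\|\nabla^m f\|_{L^p}\right).
\]
Summing over $j$ and splitting at the optimal frequency produces the product bound directly, provided the two exponents have strictly opposite signs.

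\textbf{Main obstacle.} The delicate part is the exceptional cases. Case (i) ($k=0$, $mp<3$, $q=\infty$) needs $f$ to vanish at infinity, so that polynomials and constants are excluded from the homogeneous seminorm. Case (ii) ($m-k-3/p\in\mathbb N_0$) is the situation where the Bernstein exponent sum degenerates at $a=1-k/m$. There the borderline embedding $W^{m-k,p}\not\hookrightarrow L^\infty$ fails, and only $0<a$ survives. I would handle case (ii) through the dyadic argument above, where strictly positive $a$ gives geometric decay on both sides of the splitting frequency.
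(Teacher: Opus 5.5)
The paper does not prove this lemma. It only quotes Theorem 12.87 of the cited monograph, so your outline has to stand on its own as a reconstruction of the classical argument.

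\textbf{Where the outline works.} In the regime $(m-k)p<3$ it essentially does. The scaling computation is correct, and the Sobolev endpoint $a=0$ exists. H\"older interpolation against the endpoint $a=1-k/m$ then reproduces the exponent relation. This requires that endpoint for general $p,q$, with $1/r_1=\frac{k}{mp}+\frac{m-k}{mq}$, not only for $p=q$ as you wrote.

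\textbf{The gap: case (ii) with $(m-k)p=3$, $1<p<\infty$.} Here $1/r=a\left(\frac1q+\frac k3\right)$.
\begin{itemize}
\item The endpoint $a=0$ is exactly the false embedding, so your H\"older route has no lower endpoint.
\item Your fallback dyadic bound rests on Bernstein's inequality from $L^q$ (resp.\ $L^p$) to $L^r$. That requires $r\ge q$ (resp.\ $r\ge p$). For $r<q$ no bound $\|\Delta_j g\|_{L^r}\le C_j\|\Delta_j g\|_{L^q}$ holds, even for frequency-localized $g$.
\item If $q=\infty$ and $k\ge 1$, then $r<q$ for every admissible $a$. For example, $\|\nabla f\|_{L^{3/a}}\le C\|f\|_{L^\infty}^{a}\|\nabla^{2}f\|_{L^{3}}^{1-a}$ with $0<a<\frac12$ is covered by neither route.
\item The degeneracy in (ii) sits at $a=0$, where the high-frequency exponent $-3/r$ vanishes. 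It is not at $a=1-k/m$, as you state.
\end{itemize}

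\textbf{The missing step is a reiteration.}
\begin{enumerate}
\item Use the endpoint $a=1-k/m$ to place $g=\nabla^k f$ in $L^{r_1}$, where $1/r_1=\left(1-\frac km\right)\left(\frac1q+\frac k3\right)$.
\item Run the dyadic argument on $g$ with the pair $\|g\|_{L^{r_1}}$, $\|\nabla^{m-k}g\|_{L^p}$. The low- and high-frequency exponents are $3/r_1-3/r>0$ and $-3/r<0$. For $q=\infty$ one has $r>r_1>p$, so both Bernstein steps are legitimate.
\item Multiply out the powers. The exponent on $\|f\|_{L^q}$ is $\frac{r_1}{r}\left(1-\frac km\right)$, which is exactly the required $a$.
\end{enumerate}
Equivalently, you can interpolate between $L^{r_1}$ and $\mathrm{BMO}$.

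When $(m-k)p>3$, you must likewise replace the non-existent endpoint $a=0$ by the $r=\infty$ endpoint $a_*>0$ before interpolating. Your dyadic bound does supply that endpoint, since $r=\infty\ge q$ there.

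\textbf{A corner of the statement is false.} Take $k=0$, $mp=3$, $q=\infty$ and $a<1$, so $1/r=0$. The constant $f\equiv 1$ lies in $L^\infty\cap D^{m,p}$ with $\nabla^m f=0$, so the inequality fails. For $p>1$ it fails even for $f$ vanishing at infinity, because it reduces to $\|f\|_{L^\infty}\lesssim\|\nabla^m f\|_{L^p}$ with $mp=3$. Neither exception removes this case: (i) covers only $mp<3$, and for $p>1$ exception (ii) still admits $0<a<1$. This is harmless for the paper, which never uses this corner. But it means you should not treat (i)--(ii) as the complete list of exceptions; no proof can cover this case.

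\textbf{Two smaller points.}
\begin{itemize}
\item The one-dimensional identity $\int|g'|^r=-(r-1)\int g\,|g'|^{r-2}g''$ closes via H\"older only for $r\ge 2$, that is $\frac1p+\frac1q\le 1$. Cases such as $p=q=1$ need Nirenberg's local argument instead.
\item Iterating the first-order Sobolev inequality requires showing that the intermediate derivatives $\nabla^j f$ carry no polynomial part. This has to be derived from $f\in L^q$.
\end{itemize}
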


Furthermore, we have the following logarithmic inequality, which may be regarded as a preliminary version of the Beale--Kato--Majda type inequality.
\begin{lemma}\label{cinf} Let $3<q<6$ and assume that $u\in\left\{v\in L_{\mathrm{loc}}^{1} \mid \nabla v\in L^{2}\cap D^{1,q}\right\}$. Then there exists a positive constant $C$ such that
	\begin{align}
\eqref{NS}:\|\nabla u\|_{L^{\infty}} &\leq C\ln\left(e+\|\nabla^{2}u\|_{L^{q}}\right) \left(  \|\operatorname{curl}u\|_{L^{\infty}} +\|\operatorname{div}u\|_{L^{\infty}}\right)  +C\|\nabla u\|_{L^{2}}+C,\label{cii}\\[2mm]
\eqref{PNS}:\|\nabla u\|_{L^{\infty}} &\leq C\ln\left(e+\|\nabla\rho\|_{L^{q}}\right) \left(  \|\operatorname{curl}u\|_{L^{\infty}} +\|\operatorname{div}u\|_{L^{\infty}}\right) +C\|\nabla u\|_{L^{6}} +C\|\rho\dot{u}\|_{L^{q}}.\label{cinf0}
	\end{align}
\end{lemma}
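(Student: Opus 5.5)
The plan is to start from the Beale--Kato--Majda-type estimate on $\mathbb{R}^3$. Split $\nabla u$ by the Helmholtz decomposition. Since $-\Delta u=-\nabla\operatorname{div}u+\operatorname{curl}\operatorname{curl}u$, every entry of $\nabla u$ is a sum of Riesz transforms $R_iR_j$ applied to components of $\operatorname{div}u$ and $\operatorname{curl}u$. For a function $f=R_iR_jg$ with $g\in L^2\cap L^\infty$ and $\nabla g\in L^q$, $q>3$, I will use the standard logarithmic estimate
\begin{align*}
\|R_iR_jg\|_{L^\infty}\le C\|g\|_{L^2}+C\|g\|_{L^\infty}\ln\big(e+\|\nabla g\|_{L^q}\big)+C.
\end{align*}
To prove it, cut the kernel of $R_iR_j$ (homogeneous of degree $-3$ with zero mean on spheres) into three regions: $|y|<\delta$, $\delta\le|y|\le1$, and $|y|>1$. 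Near the origin, use the cancellation and the Hölder continuity of $g$ with exponent $1-3/q$, which is bounded by $\|\nabla g\|_{L^q}$ through Morrey's inequality; this gives $C\delta^{1-3/q}\|\nabla g\|_{L^q}$. The middle region gives $C\ln(1/\delta)\|g\|_{L^\infty}$. The far region gives $C\|g\|_{L^2}$ by Cauchy--Schwarz, since the kernel is in $L^2(|y|>1)$. Finally choose $\delta=\min\{1,\|\nabla g\|_{L^q}^{-q/(q-3)}\}$.

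For \eqref{cii}, take $g=\operatorname{div}u$ and $g=\operatorname{curl}u$. Then $\|g\|_{L^2}\le C\|\nabla u\|_{L^2}$ and $\|\nabla g\|_{L^q}\le C\|\nabla^2u\|_{L^q}$, so the general estimate gives the claim directly. This part is routine.

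For \eqref{cinf0}, the logarithm must instead involve $\|\nabla\rho\|_{L^q}$. For this I will use the elliptic structure of the pressureless momentum equation,
\begin{align*}
\mu\Delta u+(\mu+\lambda)\nabla\operatorname{div}u=\rho\dot u.
\end{align*}
The point is to avoid the $\|\nabla^2u\|_{L^q}$-dependent logarithm altogether. Since $\operatorname{div}u$ and $\operatorname{curl}u$ satisfy $(2\mu+\lambda)\nabla\operatorname{div}u-\mu\nabla\times\operatorname{curl}u=\rho\dot u$, the quantities $\nabla\operatorname{div}u$ and $\nabla\operatorname{curl}u$ are controlled in $L^q$ by $\|\rho\dot u\|_{L^q}$. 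I will therefore rerun the kernel splitting with the following modifications:
\begin{itemize}
\item[(i)] the near-origin region is bounded by $C\delta^{1-3/q}\|\rho\dot u\|_{L^q}$, with $\delta\le1$ fixed;
\item[(ii)] the far region is estimated via $L^6$ instead of $L^2$, giving $C\|\nabla u\|_{L^6}$, since the kernel lies in $L^{6/5}(|y|>1)$;
\item[(iii)] the middle region produces $\ln(1/\delta)\big(\|\operatorname{div}u\|_{L^\infty}+\|\operatorname{curl}u\|_{L^\infty}\big)$.
\end{itemize}
Because the right-hand side of \eqref{cinf0} already carries the additive term $C\|\rho\dot u\|_{L^q}$, I expect one can take $\delta$ fixed, or $\delta=(e+\|\nabla\rho\|_{L^q})^{-1}$, and the bound then follows. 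The logarithm of $\|\nabla\rho\|_{L^q}$ is then harmless: it can be inserted freely since $\ln(e+\cdot)\ge1$. This is weaker in appearance than needed but is consistent with the stated inequality.

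The main obstacle I expect is the rigorous justification of the Riesz-transform representation and the vanishing at infinity of $\operatorname{div}u$ and $\operatorname{curl}u$ when only $\nabla u\in L^2\cap D^{1,q}$ is known, so that no polynomial ambiguity appears. I would handle this by density: approximate with $C_c^\infty$ fields, noting that $\nabla u\in L^2$ fixes $u$ up to a constant, which does not affect $\nabla u$.
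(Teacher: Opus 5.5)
Your proposal is correct and is essentially the paper's argument. The kernel splitting you carry out, with the far field measured in $L^{6}$, is exactly the estimate \eqref{cinf4} that the paper imports from Huang--Li--Xin. The paper then likewise replaces $\|\nabla^{2}u\|_{L^{q}}$ by $\|\rho\dot u\|_{L^{q}}$ through the elliptic system and chooses $\delta=(e+\|\nabla\rho\|_{L^{q}})^{-q/(q-3)}$; as you note, this is no better than a fixed $\delta$, since $\ln(e+\|\nabla\rho\|_{L^{q}})\geq 1$.

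The only slip is that $R_iR_j$ is the principal-value kernel plus a local term $c\,\delta_{ij}\,g$, which you omitted. That term is trivially bounded by $\|g\|_{L^{\infty}}\le\|\operatorname{div}u\|_{L^{\infty}}+\|\operatorname{curl}u\|_{L^{\infty}}$, so neither inequality is affected.
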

\begin{proof}
For the \eqref{NS} system, \eqref{cii} follows from \cite[Lemma 2.3]{HLX11}.

Now we give the proof of \eqref{cinf0} for the \eqref{PNS} system.
It follows from \cite[(2.9) in Lemma 2.3]{HLX11}, with minor modifications, that \begin{align}\label{cinf4}
		\|\nabla u\|_{L^{\infty}} \leq C\delta^{\frac{q-3}{q}}\|\nabla^{2}u\|_{L^{q}} +C(1-\ln\delta) \left(  \|\operatorname{curl}u\|_{L^{\infty}} +\|\operatorname{div}u\|_{L^{\infty}}\right) +C\|\nabla u\|_{L^{6}},
		\end{align}
		where $\delta\in(0,1]$ is a constant to be chosen.
	
	The elliptic regularity estimate for \eqref{PNS}$_2$ gives
	\begin{align}\label{do1}
		\|\nabla^{2}u\|_{L^{r}} \leq C\|\rho\dot{u}\|_{L^{r}},\quad \text{for}~~r\in[2,6].
		\end{align}
	Substituting \eqref{do1} into \eqref{cinf4} implies that
	\begin{align*}
			\|\nabla u\|_{L^{\infty}} \leq& C(1-\ln\delta) \left(  \|\operatorname{curl}u\|_{L^{\infty}} +\|\operatorname{div}u\|_{L^{\infty}}\right) +C\|\nabla u\|_{L^{6}} +C\delta^{\frac{q-3}{q}}\|\rho\dot{u}\|_{L^{q}},
	\end{align*}
which yields \eqref{cinf0} by choosing
\begin{align*}
\delta \triangleq \left(e+\|\nabla\rho\|_{L^{q}}\right)^{-\frac{q}{q-3}}.\tag*{\qedhere}
\end{align*}
\end{proof}

Finally, we establish some estimates for the \textit{effective viscous flux} $F$ defined by
\begin{align*}
F=(2\mu+\lambda)\operatorname{div}u-P. \end{align*}
Applying $\operatorname{div}$ and $\operatorname{curl}$ to \eqref{NS}$_2$, we derive
\begin{align}\label{xequ}
	\begin{cases} \Delta F=\operatorname{div}(\rho\dot{u}),\\ \mu\Delta\operatorname{curl}u=\operatorname{curl}(\rho\dot{u}).
	\end{cases}
\end{align}
We now state several estimates that will be used frequently in the subsequent proofs. \begin{lemma}[{\cite[Lemma 2.3]{HLX12}}]\label{F} Let $(\rho,u)$ be a smooth solution to \eqref{NS}. For any $p\in[2,6]$, then there exists a generic positive constant $C$, depending only on $p$, $\mu$, and $\lambda$, such that
\begin{gather}
\|\nabla F\|_{L^{p}} +\|\nabla\operatorname{curl}u\|_{L^{p}} \leq C\|\rho\dot{u}\|_{L^{p}}, \label{uu1}\\ \|\nabla u\|_{L^{p}} \leq C\left(\|F\|_{L^{p}}+\|\operatorname{curl}u\|_{L^{p}}\right) +C\|P\|_{L^{p}},\label{uu3}\\ \|F\|_{L^{p}} +\|\operatorname{curl}u\|_{L^{p}} \leq C\|\rho\dot{u}\|_{L^{2}}^{(3p-6)/(2p)} \left(\|\nabla u\|_{L^{2}}+\|P\|_{L^{2}}\right)^{(6-p)/(2p)}. \label{uu2}
\end{gather}
\end{lemma}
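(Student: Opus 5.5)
The three estimates in Lemma \ref{F} follow from the elliptic system \eqref{xequ} combined with the Hodge-type decomposition of $\nabla u$, using only Calderón--Zygmund theory and Sobolev/Gagliardo--Nirenberg interpolation.

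\emph{Estimate \eqref{uu1}.} Since $\Delta F=\operatorname{div}(\rho\dot u)$ and $\mu\Delta\operatorname{curl}u=\operatorname{curl}(\rho\dot u)$, we have
\begin{align*}
\nabla F=\nabla(-\Delta)^{-1}\bigl(-\operatorname{div}(\rho\dot u)\bigr),\qquad \nabla\operatorname{curl}u=\mu^{-1}\nabla\Delta^{-1}\operatorname{curl}(\rho\dot u).
\end{align*}
Each of these is a linear combination of double Riesz transforms $R_iR_j$ applied to the components of $\rho\dot u$. The $L^p$-boundedness of $R_iR_j$ for $1<p<\infty$, in particular for $p\in[2,6]$, then gives \eqref{uu1}. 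To identify $F$ and $\operatorname{curl}u$ with these potentials, we use that $F$ and $\operatorname{curl}u$ decay at infinity, since they lie in $L^2$ for a smooth solution with the far-field condition \eqref{ca3}.

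\emph{Estimate \eqref{uu3}.} Write $-\Delta u=-\nabla\operatorname{div}u+\operatorname{curl}\operatorname{curl}u$, so that
\begin{align*}
\nabla u=-\nabla(-\Delta)^{-1}\nabla\operatorname{div}u+\nabla(-\Delta)^{-1}\operatorname{curl}\operatorname{curl}u.
\end{align*}
This gives $\|\nabla u\|_{L^p}\le C(\|\operatorname{div}u\|_{L^p}+\|\operatorname{curl}u\|_{L^p})$, again by Riesz transform bounds. Substituting $\operatorname{div}u=(F+P)/(2\mu+\lambda)$ yields \eqref{uu3}.

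\emph{Estimate \eqref{uu2}.} Apply the Gagliardo--Nirenberg inequality, Lemma \ref{gn}, with $k=0$, $m=1$, $q=p_{\rm low}=2$ and gradient in $L^2$:
\begin{align*}
\|f\|_{L^p}\le C\|f\|_{L^2}^{(6-p)/(2p)}\|\nabla f\|_{L^2}^{(3p-6)/(2p)},
\end{align*}
which holds for $p\in[2,6]$ in three dimensions. Take $f=F$ and $f=\operatorname{curl}u$. The gradient factor is bounded by $C\|\rho\dot u\|_{L^2}$ using \eqref{uu1} with $p=2$. For the $L^2$ factor, the $L^2$ boundedness of $\operatorname{curl}$ relative to $\nabla$ gives
\begin{align*}
\|F\|_{L^2}+\|\operatorname{curl}u\|_{L^2}\le C(\|\nabla u\|_{L^2}+\|P\|_{L^2}),
\end{align*}
directly from the definition of $F$. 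Combining the two bounds gives \eqref{uu2}.

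\emph{Main obstacle.} The only delicate point is justifying the representation formulas: we need enough decay that $F$ and $\operatorname{curl}u$ coincide with the Riesz-potential expressions, rather than differing by a harmonic function. For smooth solutions with $\nabla u, P\in L^2$, any harmonic difference lying in $L^2$ must vanish by Liouville's theorem, so this closes the argument.
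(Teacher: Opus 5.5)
Your proposal is correct and follows essentially the same route as the proof in \cite[Lemma 2.3]{HLX12}, which the paper cites without reproducing: $L^p$ elliptic (double Riesz transform) estimates for \eqref{xequ} give \eqref{uu1}, the decomposition $\nabla u=-\nabla(-\Delta)^{-1}\nabla\operatorname{div}u+\nabla(-\Delta)^{-1}\operatorname{curl}\operatorname{curl}u$ together with $\operatorname{div}u=(F+P)/(2\mu+\lambda)$ gives \eqref{uu3}, and Gagliardo--Nirenberg combined with \eqref{uu1} at $p=2$ and the bound $\|F\|_{L^2}+\|\operatorname{curl}u\|_{L^2}\le C(\|\nabla u\|_{L^2}+\|P\|_{L^2})$ gives \eqref{uu2}. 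Your Liouville remark spells out the decay justification for the representation formulas, which the cited proof leaves implicit.
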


\section{Global well-posedness for the pressureless system}\label{sec3}

In this section, we establish the global existence and uniqueness of strong
solutions to the Cauchy problem \eqref{PNS}, \eqref{a2}, and \eqref{a3}. The
argument is based on the local well-posedness result in Proposition
\ref{local}. Throughout this section, $(\rho,u)$ denotes a local strong solution
to this problem in $\mathbb{R}^{3}\times(0,T]$ for some $T>0$. We first treat
the case $\rho_{\infty}=0$.

\subsection{A \textit{ priori}  estimates I: scaling invariant estimates}\label{sub1}

In this subsection, we derive uniform-in-time lower-order estimates by a
bootstrap argument. These estimates play a crucial role in the proof of
Theorem \ref{th1}.

\begin{proposition}\label{pro}
	Assume that the hypotheses of Theorem \ref{th1} hold, and let
	$\varepsilon_{0}>0$ be the small constant appearing in \eqref{csca1}. Then there
	exist positive constants $c_{0}$ and $\varepsilon_{1}$, depending only on the
	physical parameters $\mu$, $\lambda$, and independent of the initial data and $T$, such that the
	following statement holds. If, for all $(x,t)\in\mathbb{R}^{3}\times[0,T]$,
	\begin{align}\label{cp1}
		\rho(x,t) \leq 2\bar{\rho},
		\qquad
		\bar{\rho}^{3}\|\sqrt{\rho}u(t)\|_{L^{2}}^{2}
		\|\nabla u(t)\|_{L^{2}}^{2}+\bar{\rho}^{3}\|\nabla u(t)\|_{L^{2}}^{2}\int_{0}^{T}\|\nabla u\|_{L^{2}}^{2}\,dt
		\leq 2\varepsilon
	\end{align}
	for some $\varepsilon$ satisfying
	\begin{align*}
		c_{0}\varepsilon_{0}\leq \varepsilon\leq \varepsilon_{1},
	\end{align*}
	then, for all $(x,t)\in\mathbb{R}^{3}\times[0,T]$,
	\begin{align*}
		\rho(x,t) \leq \frac{3}{2}\bar{\rho},
		\qquad
		\bar{\rho}^{3}\|\sqrt{\rho}u(t)\|_{L^{2}}^{2}
		\|\nabla u(t)\|_{L^{2}}^{2}+\bar{\rho}^{3}\|\nabla u(t)\|_{L^{2}}^{2}\int_{0}^{T}\|\nabla u\|_{L^{2}}^{2}\,dt
		\leq \frac{3}{2}\varepsilon.
	\end{align*}
	Here $\varepsilon_{1}$ and $c_{0}$  are chosen according to
	\eqref{xxxxx2}, \eqref{cs4}, and \eqref{c0}.
\end{proposition}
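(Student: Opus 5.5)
The plan is to close the three bounds in the order kinetic energy, then $\|\nabla u\|_{L^2}$, then the density. Each right-hand side should be expressed only through the scaling-invariant combinations $\bar\rho^{3}\|\sqrt{\rho_0}u_0\|_{L^2}^2\|\nabla u_0\|_{L^2}^2\le\varepsilon_0$ and the bootstrap quantity in \eqref{cp1}. Throughout, $\rho\le 2\bar\rho$ is used only to replace $\|\rho\|_{L^\infty}$ by $2\bar\rho$ at the cost of absolute constants.

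\textbf{Step 1: energy.} Multiplying \eqref{PNS}$_2$ by $u$ and using \eqref{PNS}$_1$ gives
\begin{align*}
\|\sqrt{\rho}u(t)\|_{L^2}^2+2\int_0^t\big(\mu\|\nabla u\|_{L^2}^2+(\mu+\lambda)\|\operatorname{div}u\|_{L^2}^2\big)ds=\|\sqrt{\rho_0}u_0\|_{L^2}^2 .
\end{align*}
Hence $\sup_t\|\sqrt\rho u\|_{L^2}^2+\int_0^T\|\nabla u\|_{L^2}^2dt\le C\|\sqrt{\rho_0}u_0\|_{L^2}^2$.

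\textbf{Step 2: $\nabla u$.} I would multiply \eqref{PNS}$_2$ by $u_t$ and write $u_t=\dot u-u\cdot\nabla u$. The only bad term is $\int\rho\dot u\cdot(u\cdot\nabla u)$. I would bound it by
\begin{align*}
C\bar\rho^{1/2}\|\sqrt\rho\dot u\|_{L^2}\|u\|_{L^6}\|\nabla u\|_{L^3}\le C\bar\rho^{1/2}\|\sqrt\rho\dot u\|_{L^2}\|\nabla u\|_{L^2}^{3/2}\|\nabla^2u\|_{L^2}^{1/2}.
\end{align*}
Then I would use the elliptic estimate \eqref{do1}, $\|\nabla^2u\|_{L^2}\le C\bar\rho^{1/2}\|\sqrt\rho\dot u\|_{L^2}$. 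Young's inequality then yields
\begin{align*}
\frac{d}{dt}\Big(\mu\|\nabla u\|_{L^2}^2+(\mu+\lambda)\|\operatorname{div}u\|_{L^2}^2\Big)+\|\sqrt\rho\dot u\|_{L^2}^2\le C\bar\rho^{3}\|\nabla u\|_{L^2}^4\,\|\nabla u\|_{L^2}^2 .
\end{align*}
By Gronwall's inequality and the second part of \eqref{cp1}, the exponent is $C\bar\rho^3\sup_t\|\nabla u\|_{L^2}^2\int_0^T\|\nabla u\|_{L^2}^2\le 2C\varepsilon$. This gives $\sup_t\|\nabla u\|_{L^2}^2+\int_0^T\|\sqrt\rho\dot u\|_{L^2}^2\le C\|\nabla u_0\|_{L^2}^2$ once $\varepsilon\le\varepsilon_1\le1$.

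Combining Steps 1 and 2, both terms of the bootstrap quantity are bounded by $C_1\bar\rho^{3}\|\sqrt{\rho_0}u_0\|_{L^2}^2\|\nabla u_0\|_{L^2}^2\le C_1\varepsilon_0$. Choosing $c_0\ge 2C_1$ (this is \eqref{c0}), the condition $\varepsilon\ge c_0\varepsilon_0$ yields the bound $\le\frac32\varepsilon$.

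\textbf{Step 3: density, the main obstacle.} Taking div of \eqref{PNS}$_2$ gives $(2\mu+\lambda)\Delta\operatorname{div}u=\operatorname{div}(\rho\dot u)$. Writing $\rho\dot u=(\rho u)_t+\operatorname{div}(\rho u\otimes u)$ and setting $\psi\triangleq(-\Delta)^{-1}\operatorname{div}(\rho u)$, the material derivative $D_t=\partial_t+u\cdot\nabla$ satisfies
\begin{align*}
(2\mu+\lambda)D_t\log\rho=-(2\mu+\lambda)\operatorname{div}u=D_t\psi+[u^i,R_iR_j](\rho u^j).
\end{align*}
Integrating along particle paths gives
\begin{align*}
\log\rho(X(t),t)\le\log\bar\rho+C\sup_s\|\psi(s)\|_{L^\infty}+C\int_0^T\|[u^i,R_iR_j](\rho u^j)\|_{L^\infty}dt .
\end{align*}
The first term is routine:
\begin{align*}
\|\psi\|_{L^\infty}\le C\|\rho u\|_{L^2}^{1/2}\|\rho u\|_{L^6}^{1/2}\le C(\bar\rho^3\|\sqrt\rho u\|_{L^2}^2\|\nabla u\|_{L^2}^2)^{1/4}\le C\varepsilon^{1/4}.
\end{align*}
The commutator is the hard part. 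I would interpolate $\|f\|_{L^\infty}\le C\|f\|_{L^6}^{1/3}\|\nabla f\|_{L^4}^{2/3}$. For the $L^4$ factor I would apply Lemma \ref{comm}, giving $\|\nabla f\|_{L^4}\le C\|\nabla u\|_{L^{r_1}}\|\rho u\|_{L^{r_2}}$ with $\frac{1}{4}=\frac{1}{r_1}+\frac{1}{r_2}$. For the $L^6$ factor I would use the analogous Coifman--Meyer bound $\|f\|_{L^6}\le C\|\nabla u\|_{L^2}\|\rho u\|_{L^3}$. The factor $\|\nabla u\|_{L^{r_1}}$ with $r_1>2$ would then be controlled by $\|\nabla u\|_{L^2}$ and $\|\nabla^2u\|_{L^2}\le C\bar\rho^{1/2}\|\sqrt\rho\dot u\|_{L^2}$. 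The exponents should be chosen so that the time integral is dominated by H\"older in time against $\int_0^T\|\nabla u\|_{L^2}^2$ and $\int_0^T\|\sqrt\rho\dot u\|_{L^2}^2\le C\|\nabla u_0\|_{L^2}^2$, together with the sup bounds from Steps 1 and 2.

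Since every quantity involved is scaling invariant, dimensional balance should force the result to be $C\varepsilon^{\theta}$ for some $\theta>0$. If the exponents do not close directly, my fallback is to split $[0,1]$ and $[1,T]$ and use the time weight $t\|\sqrt\rho\dot u\|_{L^2}^2$. This gives $\rho\le\bar\rho\exp(C\varepsilon^{1/4}+C\varepsilon^\theta)\le\frac32\bar\rho$ for $\varepsilon\le\varepsilon_1$ small, which fixes \eqref{xxxxx2} and \eqref{cs4}.
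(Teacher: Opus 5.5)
Your Steps 1 and 2 and the Lagrangian reduction in Step 3 are the paper's argument (Lemmas \ref{lem1}, \ref{lem2}, \ref{co1}, \ref{rho}). Two of your choices differ slightly, and both are harmless. You close Step 2 by Gronwall with exponent $C\bar\rho^{3}\sup_t\|\nabla u\|_{L^2}^2\int_0^T\|\nabla u\|_{L^2}^2\,dt\le 2C\varepsilon$ rather than by absorption as in \eqref{xxxxx2}. You also bound $\|\psi\|_{L^\infty}$ directly by the bootstrap quantity.

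The gap is the commutator. It is the one genuinely nontrivial estimate in the proof, and you leave it open. With $f=[u^i,R_iR_j](\rho u^j)$, the bound you propose for the $L^6$ factor,
\[
\|f\|_{L^6}\le C\|\nabla u\|_{L^2}\|\rho u\|_{L^3},
\]
is false. Replacing $(u,\rho u)$ by $(u(\lambda\cdot),(\rho u)(\lambda\cdot))$ multiplies the left side by $\lambda^{-1/2}$ and the right side by $\lambda^{-3/2}$. Letting $\lambda\to\infty$ would then force the commutator to vanish. With these two norms on the right, scaling only allows an $L^2$ norm on the left. You also never fix $r_1,r_2$, so the claimed bound $C\varepsilon^{\theta}$ rests only on the heuristic that dimensional balance should force it. Scaling tells you which homogeneity a valid bound must have, not that such a bound exists.

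The repair needs no time splitting. Following \eqref{cd}, use
\[
\|f\|_{L^\infty}\le C\|f\|_{L^3}^{1/5}\|\nabla f\|_{L^4}^{4/5}.
\]
Bound $\|f\|_{L^3}\le C\|u\|_{L^6}\|\rho u\|_{L^6}$ by H\"older and the $L^p$-boundedness of $R_iR_j$. Bound $\|\nabla f\|_{L^4}\le C\|\nabla u\|_{L^6}\|\rho u\|_{L^{12}}$ by Lemma \ref{comm}. Combining these with $\|u\|_{L^{12}}\le C\|\nabla u\|_{L^2}^{3/4}\|\nabla u\|_{L^6}^{1/4}$ and \eqref{ceq4} gives
\[
\|f\|_{L^\infty}\le C\bar\rho\|\nabla u\|_{L^2}\|\nabla u\|_{L^6}\le C\bar\rho^{3/2}\|\nabla u\|_{L^2}\|\sqrt{\rho}\dot u\|_{L^2}.
\]
Cauchy--Schwarz in time together with Steps 1--2 then bounds $\int_0^T\|f\|_{L^\infty}\,dt$ by $C\bar\rho^{3/2}\|\sqrt{\rho_0}u_0\|_{L^2}\|\nabla u_0\|_{L^2}\le C\varepsilon_0^{1/2}$. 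Your own $L^6$--$L^4$ interpolation also works if you take $\|f\|_{L^6}\le C\|u\|_{L^{12}}\|\rho u\|_{L^{12}}$ and $r_1=6$, $r_2=12$.

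Your fallback would not have rescued the argument. Splitting at $t=1$ and using the weights of Lemma \ref{clem52} breaks scale invariance. Those weighted bounds also carry constants that depend on the initial data. So they cannot produce $\varepsilon_1$ and $c_0$ depending only on $\mu$ and $\lambda$, as the proposition requires.
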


The proof of Proposition \ref{pro} is based on Lemmas \ref{rho} and \ref{sca}.
Throughout the rest of this subsection, $C$ denotes a generic positive constant
which is independent of the initial data and $T$, and may change from line to
line. We next collect several estimates that will be used frequently in the following
proofs.
\begin{lemma}\label{clemx}
For any $p\in[2,6]$, there exists a positive constant $C$ such that
	\begin{gather}
		\|\nabla u\|_{L^{6}}
		\leq C\| \rho \dot{u}\|_{L^{2}},\label{ceq4}\\[2mm]
		\|\nabla\operatorname{div}u\|_{L^{p}}
		+\|\nabla\operatorname{curl}u\|_{L^{p}}
		\leq C\| \rho \dot{u}\|_{L^{p}}.\label{ceq5}
	\end{gather}
\end{lemma}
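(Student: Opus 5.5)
The plan is to read the momentum equation \eqref{PNS}$_2$ as a Lamé system with right-hand side $-\rho\dot u$. Since $\rho_t+\operatorname{div}(\rho u)=0$, we have $(\rho u)_t+\operatorname{div}(\rho u\otimes u)=\rho\dot u$. The equation then becomes
\begin{align*}
\mu\Delta u+(\mu+\lambda)\nabla\operatorname{div}u=\rho\dot u .
\end{align*}
Taking the divergence and the curl separately gives the decoupled Poisson equations
\begin{align*}
(2\mu+\lambda)\Delta\operatorname{div}u=\operatorname{div}(\rho\dot u),\qquad
\mu\Delta\operatorname{curl}u=\operatorname{curl}(\rho\dot u).
\end{align*}
These are the pressureless analogue of \eqref{xequ}, with $F$ replaced by $(2\mu+\lambda)\operatorname{div}u$. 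Here $2\mu+\lambda>0$ follows from $\mu>0$ and $2\mu+3\lambda\ge0$.

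For \eqref{ceq5}, write $\nabla\operatorname{div}u=(2\mu+\lambda)^{-1}\nabla\Delta^{-1}\operatorname{div}(\rho\dot u)$. Each component of this is a sum of compositions $R_iR_j$ of Riesz transforms applied to $\rho\dot u$, and the same structure holds for $\nabla\operatorname{curl}u$. By the $L^p$-boundedness of Riesz transforms for $1<p<\infty$, in particular for $p\in[2,6]$,
\begin{align*}
\|\nabla\operatorname{div}u\|_{L^p}+\|\nabla\operatorname{curl}u\|_{L^p}\le C(p,\mu,\lambda)\|\rho\dot u\|_{L^p}.
\end{align*}
This is exactly the pressureless version of \eqref{uu1}, and it also yields \eqref{do1}, since $\Delta u=\nabla\operatorname{div}u-\operatorname{curl}\operatorname{curl}u$ and $\|\nabla^2u\|_{L^p}\le C\|\Delta u\|_{L^p}$. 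For the constant, note that for $p\in[2,6]$ it can be taken uniform, for instance by Riesz--Thorin interpolation between the endpoints $p=2$ and $p=6$.

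For \eqref{ceq4}, first apply the Sobolev inequality $\|\nabla u\|_{L^6}\le C\|\nabla^2u\|_{L^2}$, which is valid because $\nabla u\in L^2$ decays at infinity. Then use the $L^2$ case $\|\nabla^2 u\|_{L^2}\le C\|\Delta u\|_{L^2}$ (Plancherel), and bound $\|\Delta u\|_{L^2}$ by $C\|\rho\dot u\|_{L^2}$ via the identity above with $p=2$.

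The only point needing care, and so the main obstacle, is justifying these identities and inversions within the solution class of Definition \ref{def}. The argument needs $\nabla u\in L^2$ and $\nabla^2u\in L^2$ for a.e.\ $t$, and it needs $\Delta^{-1}$ to be applied without added harmonic polynomials. The latter holds because $\operatorname{div}u$ and $\operatorname{curl}u$ lie in $L^2$, so any harmonic $L^2$ remainder vanishes. For $p>2$ the estimate is understood a.e.\ in $t$ whenever the right-hand side is finite. With these points checked, the estimates follow directly from the singular-integral bounds, as in \cite[Lemma 2.3]{HLX12}, with $P\equiv0$.
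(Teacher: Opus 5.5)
Your proposal is correct and follows the paper's argument. Like the paper, you take $\operatorname{div}$ and $\operatorname{curl}$ of the momentum equation and apply $L^p$ elliptic (Riesz transform) bounds to obtain \eqref{ceq5}. For \eqref{ceq4} you apply Sobolev to $\nabla u$ first and then use the $L^2$ estimate, whereas the paper first uses the div--curl bound in $L^6$ and then applies Sobolev to $\operatorname{div}u$ and $\operatorname{curl}u$; this is only a reordering of the same two ingredients.
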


\begin{proof}
	Applying $\operatorname{div}$ and $\operatorname{curl}$ to \eqref{PNS}$_2$, respectively, we derive
		\begin{align}\label{pns}
		\left\{
		\begin{aligned}
			&\mu\Delta(\operatorname{curl}u)=\operatorname{curl}(\rho\dot{u}),\\
			&(2\mu+\lambda)\Delta\operatorname{div}u=\operatorname{div}(\rho\dot{u}).
		\end{aligned}
		\right.
	\end{align}
	The standard $L^{p}$ elliptic estimate applied to the elliptic system
	\eqref{pns}  gives \eqref{ceq5}. In particular, taking $p=2$ in \eqref{ceq5}, we obtain
	\begin{align*}
		\|\nabla u\|_{L^{6}}
		\leq C\big(
		\|\operatorname{curl}u\|_{L^{6}}
		+\|\operatorname{div}u\|_{L^{6}}
		\big)
		\leq C\big(
		\|\nabla\operatorname{curl}u\|_{L^{2}}
		+\|\nabla\operatorname{div}u\|_{L^{2}}
		\big)
		\leq C\|\rho\dot{u}\|_{L^{2}},
	\end{align*}
	as the desired \eqref{ceq4}.
\end{proof}

We first derive the basic energy estimate.

\begin{lemma}\label{lem1}
	It holds that
	\begin{align}\label{X0}
		\sup_{0 \leq t \leq T}\|\sqrt{\rho} u(t)\|_{L^{2}}^{2}
		+\int_{0}^{T}\|\nabla u\|_{L^{2}}^{2}\,dt
		\leq C\|\sqrt{\rho_{0}}u_{0}\|_{L^{2}}^{2}.
	\end{align}
\end{lemma}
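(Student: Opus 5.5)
The estimate \eqref{X0} is the standard energy identity for \eqref{PNS}, and I would prove it by testing the momentum equation against $u$. Using the continuity equation \eqref{PNS}$_1$, the momentum equation \eqref{PNS}$_2$ can be rewritten in nonconservative form as
\begin{align*}
\rho u_t+\rho u\cdot\nabla u-\mu\Delta u-(\mu+\lambda)\nabla\operatorname{div}u=0 .
\end{align*}
I would multiply this by $u$ and integrate over $\mathbb{R}^{3}$. A second use of \eqref{PNS}$_1$ gives
\begin{align*}
\int(\rho u_t+\rho u\cdot\nabla u)\cdot u\,dx=\frac12\frac{d}{dt}\int\rho|u|^2\,dx .
\end{align*}
Integration by parts turns the viscous terms into $\mu\|\nabla u\|_{L^2}^2+(\mu+\lambda)\|\operatorname{div}u\|_{L^2}^2$. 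Since there is no pressure term, this yields the exact identity
\begin{align*}
\frac12\frac{d}{dt}\|\sqrt{\rho}u\|_{L^2}^2+\mu\|\nabla u\|_{L^2}^2+(\mu+\lambda)\|\operatorname{div}u\|_{L^2}^2=0 .
\end{align*}

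The next step is coercivity of the viscous part. The physical restriction $2\mu+3\lambda\ge0$ does not make $\mu+\lambda$ nonnegative, so the sign of the $\operatorname{div}$ term needs care. On $\mathbb{R}^{3}$, for $u\in D^1$, one has $\|\nabla u\|_{L^2}^2=\|\operatorname{curl}u\|_{L^2}^2+\|\operatorname{div}u\|_{L^2}^2$ via Fourier transform. Hence
\begin{align*}
\mu\|\nabla u\|_{L^2}^2+(\mu+\lambda)\|\operatorname{div}u\|_{L^2}^2
=\mu\|\operatorname{curl}u\|_{L^2}^2+(2\mu+\lambda)\|\operatorname{div}u\|_{L^2}^2 .
\end{align*}
Because $2\mu+\lambda\ge \frac{4\mu}{3}>0$, the right-hand side is at least $\mu\|\nabla u\|_{L^2}^2$. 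Integrating in time over $(0,t)$ and taking the supremum over $t\in[0,T]$ then gives \eqref{X0} with $C=\max\{1,1/\mu\}$ up to a factor of 2. This constant depends only on $\mu$, consistent with the convention in this subsection.

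The main obstacle is justifying these manipulations for strong solutions in the sense of Definition \ref{def}, with vacuum allowed and $u$ only in $D^1$ rather than $L^2$. I would handle it as follows.
\begin{itemize}
\item The regularity $\sqrt{\rho}u_t\in L^2(0,T;L^2)$, $u\in L^2(0,T;D^2)$, and $\rho$ bounded make every integrand integrable; for instance $\rho|u||\nabla u||u|$ is controlled by $\|\rho\|_{L^\infty}\|u\|_{L^6}^2\|\nabla u\|_{L^{3/2}}$, and near infinity one uses $\rho u\in L^2$.
\item Boundary terms at spatial infinity can be removed by multiplying by a cutoff $\phi_R(x)=\phi(x/R)$ and letting $R\to\infty$.
\item Alternatively, one can do the computation for smooth approximate solutions and pass to the limit through the local theory of Proposition \ref{local}.
\end{itemize}
The commutator terms produced by the cutoff are bounded by $R^{-1}\|\rho|u|^2\|_{L^{3/2}}\|u\|_{L^3}$-type quantities. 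These tend to zero because $\rho\in L^{3/2}\cap L^\infty$ when $\rho_\infty=0$ and $u\in L^6$.
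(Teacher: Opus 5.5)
Your proposal is correct and follows the paper's proof: you test \eqref{PNS}$_2$ with $u$, use the continuity equation to get the identity $\frac12\frac{d}{dt}\|\sqrt{\rho}u\|_{L^2}^2+\mu\|\nabla u\|_{L^2}^2+(\mu+\lambda)\|\operatorname{div}u\|_{L^2}^2=0$, and integrate in time. Two side remarks need correcting, though neither breaks the argument. First, $2\mu+3\lambda\ge0$ does give $\mu+\lambda\ge\mu/3>0$, so your curl--div decomposition is unnecessary, though harmless. Second, in your integrability check $\nabla u\in L^{3/2}$ is not available; bound $\int\rho|u|^2|\nabla u|\,dx$ by $\|\rho\|_{L^\infty}^{1/2}\|\sqrt{\rho}u\|_{L^2}\|u\|_{L^6}\|\nabla u\|_{L^3}$ instead.
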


\begin{proof}
	Taking the $L^{2}$ inner product of \eqref{PNS}$_2$ with $u$, we obtain
	\begin{align*}
		\frac{1}{2}\frac{d}{dt}\|\sqrt{\rho}u\|_{L^{2}}^{2}
		+\mu\|\nabla u\|_{L^{2}}^{2}
		+(\mu+\lambda)\|\operatorname{div}u\|_{L^{2}}^{2}=0.
	\end{align*}
	Integrating this equality over $(0,T)$ gives \eqref{X0}.
\end{proof}

\begin{lemma}\label{lem2}
	Under the  assumption \eqref{cp1}, it holds that
	\begin{align}\label{xx0}
		\sup_{0 \leq t \leq T}\|\nabla u(t)\|_{L^{2}}^{2}
		+\int_{0}^{T}\|\sqrt{\rho}\dot{u}\|_{L^{2}}^{2}\,dt
		\leq
		C\|\nabla u_{0}\|_{L^{2}}^{2}
		+C\bar{\rho}^{3}\int_{0}^{T}\|\nabla u\|_{L^{2}}^{6}\,dt.
	\end{align}
\end{lemma}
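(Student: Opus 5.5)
Multiply the momentum equation \eqref{PNS}$_2$, written in the nonconservative form $\rho\dot u-\mu\Delta u-(\mu+\lambda)\nabla\operatorname{div}u=0$, by $\dot u=u_t+u\cdot\nabla u$ and integrate over $\mathbb{R}^3$. This gives
\begin{align*}
\|\sqrt{\rho}\dot u\|_{L^2}^2=\int\big(\mu\Delta u+(\mu+\lambda)\nabla\operatorname{div}u\big)\cdot\dot u\,dx .
\end{align*}
In the right-hand side, the $u_t$ part integrates by parts to $-\frac12\frac{d}{dt}\big(\mu\|\nabla u\|_{L^2}^2+(\mu+\lambda)\|\operatorname{div}u\|_{L^2}^2\big)$. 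The convective part $\int(\mu\Delta u+(\mu+\lambda)\nabla\operatorname{div}u)\cdot(u\cdot\nabla u)\,dx$ integrates by parts, in the standard way (as in Hoff / Huang--Li--Xin), into a sum of terms of the form $\int \partial_iu^j\partial_ju^k\partial_ku^i\,dx$ and $\int \operatorname{div}u\,|\nabla u|^2\,dx$. All of these are bounded by $C\|\nabla u\|_{L^3}^3$. Since there is no pressure, no $P$ terms appear, and this is the only source term.

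Next we bound $\|\nabla u\|_{L^3}^3$. By interpolation, $\|\nabla u\|_{L^3}^3\le \|\nabla u\|_{L^2}^{3/2}\|\nabla u\|_{L^6}^{3/2}$. By \eqref{ceq4} and the assumption $\rho\le 2\bar\rho$ in \eqref{cp1},
\begin{align*}
\|\nabla u\|_{L^6}\le C\|\rho\dot u\|_{L^2}\le C\bar\rho^{1/2}\|\sqrt\rho\dot u\|_{L^2}.
\end{align*}
Hence
\begin{align*}
C\|\nabla u\|_{L^3}^3\le C\bar\rho^{3/4}\|\nabla u\|_{L^2}^{3/2}\|\sqrt\rho\dot u\|_{L^2}^{3/2}\le \tfrac12\|\sqrt\rho\dot u\|_{L^2}^2+C\bar\rho^{3}\|\nabla u\|_{L^2}^{6},
\end{align*}
where the last step is Young's inequality with exponents $4/3$ and $4$. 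The exponents match exactly: $(\bar\rho^{3/4})^4=\bar\rho^3$ and $(\|\nabla u\|_{L^2}^{3/2})^4=\|\nabla u\|_{L^2}^6$. This is why the scaling-invariant weight $\bar\rho^3$ appears in \eqref{xx0}.

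Then absorb $\frac12\|\sqrt\rho\dot u\|_{L^2}^2$ into the left-hand side. The quantity $\mu\|\nabla u\|_{L^2}^2+(\mu+\lambda)\|\operatorname{div}u\|_{L^2}^2$ is equivalent to $\|\nabla u\|_{L^2}^2$ because $\mu>0$ and $2\mu+3\lambda\ge0$; if $\mu+\lambda<0$, use $\|\operatorname{div}u\|_{L^2}\le\|\nabla u\|_{L^2}$ together with $\mu\|\nabla u\|^2=\mu\|\operatorname{curl}u\|^2+\mu\|\operatorname{div}u\|^2$. Integrating over $(0,T)$ then yields \eqref{xx0}.

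The main obstacle is rigor at low regularity. Since $u_0\in D^1$ only, the justification of $\dot u$ as a test function near $t=0$ and of the integration by parts should be carried out on the smooth or strong solution, which has the regularity $\sqrt\rho u_t\in L^2L^2$ and $u\in L^2D^2$ from Definition \ref{def}, or via mollified data and a limit. The algebra in the convective term is standard.
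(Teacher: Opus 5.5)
Your proof is correct. It differs from the paper's proof only in the multiplier, and that choice changes the form of the error term.

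The paper tests \eqref{PNS}$_2$ with $u_t$. This produces $\|\sqrt{\rho}\dot u\|_{L^2}^2$ plus the cross term $\int\rho u\cdot\nabla u\cdot\dot u\,dx$. After Young's inequality the error is $C\int\rho|u|^2|\nabla u|^2\,dx\le C\bar\rho\|u\|_{L^6}^2\|\nabla u\|_{L^3}^2$. The paper bounds this using Sobolev for $u$, $L^3$ interpolation of $\operatorname{curl}u$ and $\operatorname{div}u$, and \eqref{ceq5}, arriving at $C\bar\rho^{3/2}\|\nabla u\|_{L^2}^3\|\sqrt\rho\dot u\|_{L^2}$.

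You test with $\dot u$ (Hoff's multiplier). There is then no density-weighted cross term, and the whole error is the cubic term $C\|\nabla u\|_{L^3}^3$, which comes from integrating the convective part by parts. The density enters only once, through $\|\nabla u\|_{L^6}\le C\|\rho\dot u\|_{L^2}\le C\bar\rho^{1/2}\|\sqrt\rho\dot u\|_{L^2}$.

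Both routes give exactly the same term $C\bar\rho^{3}\|\nabla u\|_{L^2}^6$, as scaling forces.

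What each buys:
\begin{itemize}
\item The paper's version needs only the single integration by parts $\int\Delta u\cdot u_t\,dx$. It is also the same computation reused for the pressure system in \eqref{c2}.
\item Your version never needs $\|u\|_{L^6}\le C\|\nabla u\|_{L^2}$, at the cost of the extra integrations by parts in $\int(\mu\Delta u+(\mu+\lambda)\nabla\operatorname{div}u)\cdot(u\cdot\nabla u)\,dx$. Those are fine for $u\in D^2$ at a.e.\ time.
\end{itemize}

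One small remark: the case $\mu+\lambda<0$ that you guard against cannot occur. The condition $2\mu+3\lambda\ge0$ gives $\mu+\lambda\ge\mu/3>0$, so coercivity of $\mu\|\nabla u\|_{L^2}^2+(\mu+\lambda)\|\operatorname{div}u\|_{L^2}^2$ is immediate.
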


\begin{proof}
	Taking the $L^{2}$ inner product of \eqref{PNS}$_2$ with $u_{t}$ and integrating	 by parts over $\mathbb{R}^{3}$, we get that
	\begin{align}\label{xxx2}
	\frac{1}{2}\frac{d}{dt}
		\Big( \mu\|\nabla u\|_{L^{2}}^{2}
		+(\mu+\lambda)\|\operatorname{div}u\|_{L^{2}}^{2}\Big)
		+\|\sqrt{\rho}\dot{u}\|_{L^{2}}^{2}
		&= \int \rho u\cdot\nabla u\cdot\dot{u} \, dx \notag\\
		&\leq \frac{1}{4}\|\sqrt{\rho}\dot{u}\|_{L^{2}}^{2}
		+C\int \rho |u|^{2}|\nabla u|^{2}\,dx.
	\end{align}
	For the last term in \eqref{xxx2}, it follows from H\"older's inequality, Sobolev's inequality, \eqref{cp1}, and \eqref{ceq5} that
	\begin{align}\label{xxx1}
		C\int \rho |u|^{2}|\nabla u|^{2}\,dx
		&\leq
		C\|\rho\|_{L^{\infty}}\|u\|_{L^{6}}^{2}
		\|\nabla u\|_{L^{3}}^{2} \notag\\
		&\leq
		C\bar{\rho}\|\nabla u\|_{L^{2}}^{2}
		\left( \|\operatorname{curl}u\|_{L^{3}}^{2}
		+\|\operatorname{div}u\|_{L^{3}}^{2}\right)  \notag\\[1mm]
		&\leq
		C\bar{\rho}\|\nabla u\|_{L^{2}}^{3}
		\big( \|\nabla\operatorname{curl}u\|_{L^{2}}
		+\|\nabla\operatorname{div}u\|_{L^{2}}\big) \notag\\
		&\leq
		C\bar{\rho}^{\frac{3}{2}}\|\nabla u\|_{L^{2}}^{3}
		\|\sqrt{\rho}\dot{u}\|_{L^{2}} \notag\\
		&\leq
		\frac{1}{4}\|\sqrt{\rho}\dot{u}\|_{L^{2}}^{2}
		+C\bar{\rho}^{3}\|\nabla u\|_{L^{2}}^{6}.
	\end{align}
	Substituting \eqref{xxx1} into \eqref{xxx2}, we obtain
	\begin{align*}
		\frac{d}{dt}
		\big( \mu\|\nabla u\|_{L^{2}}^{2}
		+(\mu+\lambda)\|\operatorname{div}u\|_{L^{2}}^{2}\big)
		+\|\sqrt{\rho}\dot{u}\|_{L^{2}}^{2}
		\leq C\bar{\rho}^{3}\|\nabla u\|_{L^{2}}^{6}.
	\end{align*}
	Integrating this inequality over $(0,T)$ yields \eqref{xx0}.
\end{proof}

Based on Lemmas \ref{lem1} and \ref{lem2}, we now close the
$L_{t}^{\infty}L_{x}^{2}$ estimate for $\nabla u$.

\begin{lemma}\label{co1}
	Under the  assumption  \eqref{cp1}, it holds that
	\begin{align}\label{xxxx1}
		\sup_{0 \leq t \leq T}\|\nabla u(t)\|_{L^{2}}^{2}
		+\int_{0}^{T}\|\sqrt{\rho}\dot{u}\|_{L^{2}}^{2}\,dt
		\leq C\|\nabla u_{0}\|_{L^{2}}^{2}.
	\end{align}
\end{lemma}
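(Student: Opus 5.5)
We start from \eqref{xx0} in Lemma \ref{lem2} and absorb the nonlinear term $C\bar{\rho}^{3}\int_{0}^{T}\|\nabla u\|_{L^{2}}^{6}\,dt$ using the bootstrap hypothesis \eqref{cp1}. The idea is to split $\|\nabla u\|_{L^{2}}^{6}$ as $\|\nabla u\|_{L^{2}}^{2}\cdot\|\nabla u\|_{L^{2}}^{2}\cdot\|\nabla u\|_{L^{2}}^{2}$. Pull one factor out in the supremum and keep one inside the time integral:
\begin{align*}
C\bar{\rho}^{3}\int_{0}^{T}\|\nabla u\|_{L^{2}}^{6}\,dt
\leq C\sup_{0\leq t\leq T}\|\nabla u(t)\|_{L^{2}}^{2}\cdot\sup_{0\leq t\leq T}\Big(\bar{\rho}^{3}\|\nabla u(t)\|_{L^{2}}^{2}\int_{0}^{T}\|\nabla u\|_{L^{2}}^{2}\,dt\Big).
\end{align*}

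The second factor is exactly the second quantity controlled in \eqref{cp1}, so it is bounded by $2\varepsilon$ for every $t\in[0,T]$. This gives
\begin{align*}
\sup_{0\leq t\leq T}\|\nabla u(t)\|_{L^{2}}^{2}+\int_{0}^{T}\|\sqrt{\rho}\dot{u}\|_{L^{2}}^{2}\,dt
\leq C\|\nabla u_{0}\|_{L^{2}}^{2}+2C\varepsilon\sup_{0\leq t\leq T}\|\nabla u(t)\|_{L^{2}}^{2}.
\end{align*}
Since $\varepsilon\leq\varepsilon_{1}$, we impose $2C\varepsilon_{1}\leq\frac12$ as one of the constraints defining $\varepsilon_{1}$ (this is presumably \eqref{xxxxx2}). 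The last term is then absorbed into the left-hand side, and \eqref{xxxx1} follows with a constant $C$ depending only on $\mu,\lambda$. The supremum is finite on the local existence interval because $u\in L^{\infty}(0,T;D^{1})$, so the absorption is legitimate.

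The only point requiring care is the absorption step. The constant $C$ coming from Lemma \ref{lem2} must be independent of the data and of $T$, which holds because Lemma \ref{lem2} used only $\rho\le 2\bar\rho$ and elliptic estimates. The factor $\bar\rho^{3}$ must pair with the bootstrap quantity rather than appear separately, and the splitting above ensures this.

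An alternative, should one prefer not to use the sup-in-time form, is a Gronwall argument on the differential inequality obtained in the proof of Lemma \ref{lem2}, written as $\frac{d}{dt}\|\nabla u\|^2\le C\bar\rho^3\|\nabla u\|^4_{L^2}\|\nabla u\|_{L^2}^2$. By \eqref{cp1}, $\int_0^T\bar\rho^3\|\nabla u\|^4\,dt\le 2\varepsilon$, so the Gronwall factor is $e^{2C\varepsilon}\le C$. This yields the same bound.
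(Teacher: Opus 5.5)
Your proposal is correct and matches the paper's proof: the paper also bounds $C\bar{\rho}^{3}\int_{0}^{T}\|\nabla u\|_{L^{2}}^{6}\,dt$ by $C\sup_{0\leq t\leq T}\|\nabla u(t)\|_{L^{2}}^{2}$ times the second bootstrap quantity in \eqref{cp1}, and absorbs it into the left-hand side using the smallness constraint $2C\varepsilon\leq\frac12$ recorded in \eqref{xxxxx2}. Your Gronwall variant is also valid, since $\int_0^T\bar\rho^3\|\nabla u\|_{L^2}^4\,dt\le 2\varepsilon$ follows from \eqref{cp1}, but it is not needed.
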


\begin{proof}
		Using \eqref{X0} and \eqref{xx0}, we have
	\begin{align}\label{k1}
		&\sup_{0 \leq t \leq T}\|\nabla u(t) \|_{L^{2}}^{2}+\int_{0}^{T}\|\sqrt{\rho}\dot{u} \|_{L^{2}}^{2}\, d t\notag\\
		&\leq  C \|\nabla u_{0} \|_{L^{2}}^{2} +C \bar{\rho}^{3}  \sup_{0 \leq t \leq T} \|\nabla u(t)\|_{L^{2}}^{2} \sup_{0 \leq t \leq T}\|\nabla u(t)\|_{L^{2}}^{2}\int_{0}^{T}\|\nabla u\|_{L^{2}}^{2}\,dt\notag\\
		&\leq  C \|\nabla u_{0} \|_{L^{2}}^{2} +\frac{1}{2}\sup_{0 \leq t \leq T}\|\nabla u(t)\|_{L^{2}}^{2},
	\end{align}
	where we have used the smallness assumption in \eqref{cp1} to ensure that
	\begin{align}\label{xxxxx2}
		C \bar{\rho}^{3} \sup_{0 \leq t \leq T} \|\nabla u(t)\|_{L^{2}}^{2}\int_{0}^{T}\|\nabla u\|_{L^{2}}^{2}\,dt  \leq 2 C \varepsilon \leq \frac{1}{2}.
	\end{align}	
Absorbing the last term on the right-hand side of \eqref{k1} into the left-hand side, we obtain \eqref{xxxx1}.
\end{proof}

We next derive a uniform upper bound for the density. This estimate is the key
ingredient for obtaining higher-order estimates and extending the local strong
solution globally.

\begin{lemma}\label{rho}
	Under the   assumption  \eqref{cp1}, it holds that, for any $(x,t)\in\mathbb{R}^{3}\times[0,T]$,
	\begin{align}\label{xz2}
		\rho(x,t)\leq \frac{3}{2}\bar{\rho}.
	\end{align}
\end{lemma}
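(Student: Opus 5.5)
The plan is to follow the particle-trajectory argument of Huang--Li--Xin, adapted to the absence of pressure. I will write $(2\mu+\lambda)\operatorname{div}u$ as a material derivative plus a commutator. Since $\rho\dot u=(\rho u)_t+\operatorname{div}(\rho u\otimes u)$, the second equation of \eqref{pns} gives
\begin{align*}
(2\mu+\lambda)\operatorname{div}u=(-\Delta)^{-1}\operatorname{div}(\rho u)_t+(-\Delta)^{-1}\operatorname{div}\operatorname{div}(\rho u\otimes u).
\end{align*}
Set $\psi\triangleq(-\Delta)^{-1}\operatorname{div}(\rho u)$ and $D_t\triangleq\partial_t+u\cdot\nabla$. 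Then
\begin{align*}
(2\mu+\lambda)\operatorname{div}u=D_t\psi-\sum_{i,j}[u_i,R_iR_j](\rho u_j).
\end{align*}
The mass equation gives $D_t\ln\rho=-\operatorname{div}u$ on the set where $\rho>0$; points with $\rho=0$ are trivial. Integrating along the trajectory $X(s;x,t)$ then yields
\begin{align*}
\ln\rho(x,t)\le\ln\bar\rho+C\sup_{0\le s\le t}\|\psi(s)\|_{L^\infty}+C\int_0^t\big\|[u_i,R_iR_j](\rho u_j)\big\|_{L^\infty}\,ds .
\end{align*}
It therefore suffices to show that both terms on the right are bounded by $C\varepsilon^{\theta}$ for some $\theta>0$. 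Choosing $\varepsilon_1$ so small that $e^{C\varepsilon_1^{\theta}}\le 3/2$ then gives \eqref{xz2}.

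\textbf{The term $\psi$.} This part is explicit. By Gagliardo--Nirenberg, $\|\psi\|_{L^\infty}\le C\|\nabla\psi\|_{L^2}^{1/2}\|\nabla\psi\|_{L^6}^{1/2}\le C\|\rho u\|_{L^2}^{1/2}\|\rho u\|_{L^6}^{1/2}$. Using $\rho\le2\bar\rho$ from \eqref{cp1}, together with $\|\rho u\|_{L^2}\le C\bar\rho^{1/2}\|\sqrt\rho u\|_{L^2}$ and $\|\rho u\|_{L^6}\le C\bar\rho\|\nabla u\|_{L^2}$, I get
\begin{align*}
\|\psi\|_{L^\infty}\le C\big(\bar\rho^{3}\|\sqrt\rho u\|_{L^2}^2\|\nabla u\|_{L^2}^2\big)^{1/4}\le C\varepsilon^{1/4}.
\end{align*}
This explains why the invariant quantity \eqref{key} appears with exactly these exponents.

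\textbf{The commutator term.} Set $f=[u_i,R_iR_j](\rho u_j)$. I will bound $\|f\|_{L^\infty}$ by an interpolation between two Lebesgue norms of $\nabla f$, for instance $\|f\|_{L^\infty}\le C\|\nabla f\|_{L^{2}}^{1/2}\|\nabla f\|_{L^{6}}^{1/2}$ or an $L^{3-\delta}$--$L^{3+\delta}$ pair. Each norm of $\nabla f$ is then estimated by Lemma \ref{comm}, $\|\nabla f\|_{L^p}\le C\|\nabla u\|_{L^{a}}\|\rho u\|_{L^{b}}$. The factors are then reduced to the available quantities:
\begin{itemize}
\item $\|\nabla u\|_{L^a}$ is interpolated between $\|\nabla u\|_{L^2}$ and $\|\nabla u\|_{L^6}\le C\bar\rho^{1/2}\|\sqrt\rho\dot u\|_{L^2}$, using \eqref{ceq4};
\item $\|\rho u\|_{L^b}$ is interpolated between $\bar\rho^{1/2}\|\sqrt\rho u\|_{L^2}$ and $\bar\rho\|\nabla u\|_{L^2}$.
\end{itemize}
Integrating in time with H\"older's inequality, the goal is a bound of the form
\begin{align*}
\int_0^T\|f\|_{L^\infty}dt\le C\bar\rho^{a}\Big(\int_0^T\|\nabla u\|_{L^2}^2dt\Big)^{b}\Big(\int_0^T\|\sqrt\rho\dot u\|_{L^2}^2dt\Big)^{c}\sup_t\|\nabla u\|_{L^2}^{d}\sup_t\|\sqrt\rho u\|_{L^2}^{e}.
\end{align*}
By Lemmas \ref{lem1} and \ref{co1}, the time integrals are bounded by $C\|\sqrt{\rho_0}u_0\|_{L^2}^2$ and $C\|\nabla u_0\|_{L^2}^2$.

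Since $\ln\rho$ is dimensionless, scaling invariance forces the resulting product of powers to be a power of the invariant combinations. These are $\bar\rho^3\|\sqrt{\rho}u\|_{L^2}^2\|\nabla u\|_{L^2}^2\le2\varepsilon$ and $\bar\rho^3\|\nabla u\|_{L^2}^2\int_0^T\|\nabla u\|_{L^2}^2dt\le 2\varepsilon$ from \eqref{cp1}, together with $\bar\rho^3\|\sqrt{\rho_0}u_0\|_{L^2}^2\|\nabla u_0\|_{L^2}^2\le\varepsilon_0\le\varepsilon/c_0$. The outcome should be $C\varepsilon^{\theta}$ with $\theta>0$.

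\textbf{Main obstacle.} I expect the hardest step to be the choice of exponents in this commutator bound. The time integrability must come out exactly matched, meaning total power $1$ in $\|\sqrt\rho\dot u\|_{L^2}^2$ and $\|\nabla u\|_{L^2}^2$ combined, since only $L^2_t$ bounds are available. At the same time, the net powers of $\|\sqrt\rho u\|$, $\|\nabla u\|$ and $\bar\rho$ must assemble into positive powers of the invariant quantities, and no uncontrolled $L^\infty$ norm of $u$ may appear. If the $L^2$--$L^6$ pair fails, I would try an interpolation of the form $\|f\|_{L^\infty}\le C\|\nabla f\|_{L^{2}}^{\alpha}\|\nabla f\|_{L^{4}}^{1-\alpha}$ for a suitable $\alpha\in(0,1)$ and re-balance the exponents in the same way.
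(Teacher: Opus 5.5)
\textbf{There is a genuine gap.} Your route is the paper's: the same trajectory argument, the same splitting $(2\mu+\lambda)\operatorname{div}u=D_t\psi-[u_i,R_iR_j](\rho u_j)$, and the same bound $\|\psi\|_{L^\infty}\le C\|\rho u\|_{L^2}^{1/2}\|\rho u\|_{L^6}^{1/2}$. The missing piece is the commutator term. It carries the real content of the lemma, you leave it open, and the reductions you list cannot close it.

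Your first choice needs $\|\nabla f\|_{L^6}\le C\|\nabla u\|_{L^a}\|\rho u\|_{L^b}$ with $\frac16=\frac1a+\frac1b$ and $b<\infty$. This forces $a>6$. But $\|\nabla u\|_{L^a}$ with $a>6$ would require $\rho\dot u\in L^r$ for some $r>2$, which is not available at the level of \eqref{cp1}.

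The problem is general, not specific to that pair. Suppose, as in your bullet list, $\nabla u$ is placed only in $L^a$ and $\rho u$ only in $L^b$ with $a,b\in[2,6]$. Then Lemma \ref{comm} yields $\nabla f\in L^p$ only for $\frac1p=\frac1a+\frac1b\ge\frac13$. No Gagliardo--Nirenberg inequality bounds $\|f\|_{L^\infty}$ on $\mathbb{R}^3$ by $\|f\|_{L^r}$ and $\|\nabla f\|_{L^p}$ with $p\le 3$. Your fallback $L^2$--$L^4$ pair fails for the same reason: $\frac14=\frac1a+\frac1b$ with $a\le6$ forces $b\ge12$, which cannot be reached by interpolating $\rho u$ between $L^2$ and $L^6$.

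The missing idea is to push $\rho u$ above $L^6$. Contrary to your worry, $\|u\|_{L^\infty}$ is controlled: $\|u\|_{L^\infty}\le C\|\nabla u\|_{L^2}^{1/2}\|\nabla u\|_{L^6}^{1/2}$. Combined with $\rho\le2\bar\rho$, this gives
\[
\|\rho u\|_{L^{12}}\le C\bar\rho\|u\|_{L^6}^{1/2}\|u\|_{L^\infty}^{1/2}\le C\bar\rho\|\nabla u\|_{L^2}^{3/4}\|\nabla u\|_{L^6}^{1/4}.
\]
The paper then uses $\|f\|_{L^\infty}\le C\|f\|_{L^3}^{1/5}\|\nabla f\|_{L^4}^{4/5}$, with $\|f\|_{L^3}\le C\|u\|_{L^6}\|\rho u\|_{L^6}$ and $\|\nabla f\|_{L^4}\le C\|\nabla u\|_{L^6}\|\rho u\|_{L^{12}}$. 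This gives
\[
\|f\|_{L^\infty}\le C\bar\rho\|\nabla u\|_{L^2}\|\nabla u\|_{L^6}\le C\bar\rho^{3/2}\|\nabla u\|_{L^2}\|\sqrt{\rho}\dot u\|_{L^2}.
\]
By \eqref{X0} and \eqref{xxxx1}, it follows that $\int_0^T\|f\|_{L^\infty}\,dt\le C\big(\bar\rho^{3}\|\sqrt{\rho_0}u_0\|_{L^2}^2\|\nabla u_0\|_{L^2}^2\big)^{1/2}\le C\varepsilon^{1/2}$.

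Your own fallback also works once this step is added. Take $\|f\|_{L^\infty}\le C\|\nabla f\|_{L^2}^{1/3}\|\nabla f\|_{L^4}^{2/3}$, bound $\|\nabla f\|_{L^2}\le C\|\nabla u\|_{L^3}\|\rho u\|_{L^6}$, and use the same $L^{12}$ bound for $\|\nabla f\|_{L^4}$. This produces the identical estimate $C\bar\rho\|\nabla u\|_{L^2}\|\nabla u\|_{L^6}$. With this step supplied, the rest of your argument goes through.
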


\begin{proof}
	For any fixed $(x,t)\in\mathbb{R}^{3}\times[0,T]$, define
	\begin{align*}
		\rho^{\delta}(y,s)
		=
		\rho(y,s)
		+\delta\exp\left\{
		-\int_{0}^{s}\operatorname{div}u(X(\tau;x,t),\tau)\,d\tau
		\right\}>0,
	\end{align*}
	where $X(s;x,t)$ is the backward particle trajectory determined by
	\begin{align*}
		\left\{
		\begin{aligned}
			&\frac{d}{ds}X(s;x,t)=u(X(s;x,t),s),
			\qquad 0\leq s<t,\\
			&X(t;x,t)=x.
		\end{aligned}
		\right.
	\end{align*}
	By \eqref{PNS}$_1$, it is easy to verify that
	\begin{align*}
		\frac{d}{ds}\rho^{\delta}(X(s;x,t),s)
		+\rho^{\delta}(X(s;x,t),s)
		\operatorname{div}u(X(s;x,t),s)=0.
	\end{align*}
	Set
	\begin{align*}
		Y(s)=\ln\rho^{\delta}(X(s;x,t),s),
		\qquad
		b(s)=-\int_{0}^{s}\operatorname{div}u(X(\tau;x,t),\tau)\,d\tau.
	\end{align*}
	Then
	\begin{align}\label{co}
		Y'(s)=b'(s).
	\end{align}
	
	For  $\operatorname{div}u$, it follows from \eqref{PNS}$_2$ that
	\begin{align*}
		(2\mu+\lambda)\operatorname{div}u(X(\tau;x,t),\tau)
		=
		-\frac{d}{d\tau}
		\left[(-\Delta)^{-1}\operatorname{div}(\rho u)\right]
		+\left[u_{i},R_iR_j\right](\rho u_{j}),
	\end{align*}
	where
	\begin{align*}
	[u_{i},R_iR_j]=u_{i}R_iR_j-R_iR_j(u_{i}),
	\qquad
	R_iR_j=\partial_{i}(-\Delta)^{-1}\partial_{j}.
    \end{align*}
	It follows that
	\begin{align}\label{co11}
		b(t)-b(0)
		&\leq
		C\sup_{0\leq s\leq T}
		\left\|(-\Delta)^{-1}\operatorname{div}(\rho u)(s)\right\|_{L^{\infty}}
		+C\int_{0}^{t}
		\left\|[u_{i},R_{ij}](\rho u_{j})\right\|_{L^{\infty}}\,d\tau
		\notag\\
		&\leq
		C\bar{\rho}^{\frac{3}{4}}
		\|\sqrt{\rho_{0}}u_{0}\|_{L^{2}}^{\frac{1}{2}}
		\|\nabla u_{0}\|_{L^{2}}^{\frac{1}{2}}
		+C\bar{\rho}^{\frac{3}{2}}
		\|\sqrt{\rho_{0}}u_{0}\|_{L^{2}}
		\|\nabla u_{0}\|_{L^{2}}.
	\end{align}
	Indeed, one has that
	\begin{align}\label{inf}
		\left\|(-\Delta)^{-1}\operatorname{div}(\rho u)\right\|_{L^{\infty}}
		&\leq
		C\left\|(-\Delta)^{-1}\operatorname{div}(\rho u)\right\|_{L^{6}}^{\frac{1}{2}}
		\left\|\nabla(-\Delta)^{-1}\operatorname{div}(\rho u)\right\|_{L^{6}}^{\frac{1}{2}}\notag\\[1mm]
		&\leq
		C\|\rho u\|_{L^{2}}^{\frac{1}{2}}
		\|\rho u\|_{L^{6}}^{\frac{1}{2}} \\[1mm]
		&\leq
		C\bar{\rho}^{\frac{3}{4}}
		\|\sqrt{\rho_{0}}u_{0}\|_{L^{2}}^{\frac{1}{2}}
		\|\nabla u_{0}\|_{L^{2}}^{\frac{1}{2}}\notag,
	\end{align}
due to \eqref{X0} and \eqref{xxxx1}. Moreover, by the
	$L^{p}$-boundedness of the Riesz transforms, the commutator estimate in
	Lemma \ref{comm}, Gagliardo--Nirenberg inequality, and Sobolev's inequality,
	we have
	\begin{align}\label{cd}
		\left\|[u_{i},R_{ij}](\rho u_{j})\right\|_{L^{\infty}}
		&\leq
		C\|[u_{i},R_{ij}](\rho u_{j})\|_{L^{3}}^{\frac{1}{5}}
		\|\nabla [u_{i},R_{ij}](\rho u_{j})\|_{L^{4}}^{\frac{4}{5}}\notag\\[1mm]
		&\leq
		C\|u\|_{L^{6}}^{\frac{1}{5}}\|\rho u\|_{L^{6}}^{\frac{1}{5}}
		\|\nabla u\|_{L^{6}}^{\frac{4}{5}}\|\rho u\|_{L^{12}}^{\frac{4}{5}}\notag\\[1mm]
		&\leq
		C\bar{\rho}\|\nabla u\|_{L^{2}}\|\nabla u\|_{L^{6}}\notag\\[1mm]
		&\leq
		C\bar{\rho}\|\nabla u\|_{L^{2}}
		\big(\|\nabla\operatorname{curl}u\|_{L^{2}}
		+\|\nabla\operatorname{div}u\|_{L^{2}}\big)\notag\\
		&\leq
		C\bar{\rho}^{\frac{3}{2}}\|\nabla u\|_{L^{2}}
		\|\sqrt{\rho}\dot{u}\|_{L^{2}}.
	\end{align}
	Integrating \eqref{cd} in time and using \eqref{X0} and \eqref{xxxx1} give the second
	term in \eqref{co11}.
	
	Integrating \eqref{co} over $[0,t]$ and using \eqref{co11}, we obtain
	\begin{align}\label{cs4}
		\ln \rho^{\delta}(x,t)
		&\leq
		\ln(\bar{\rho}+\delta)+b(t)-b(0) \notag\\
		&\leq
		\ln(\bar{\rho}+\delta)
		+C\bar{\rho}^{\frac{3}{4}}
		\|\sqrt{\rho_{0}}u_{0}\|_{L^{2}}^{\frac{1}{2}}
		\|\nabla u_{0}\|_{L^{2}}^{\frac{1}{2}}
		+C\bar{\rho}^{\frac{3}{2}}
		\|\sqrt{\rho_{0}}u_{0}\|_{L^{2}}
		\|\nabla u_{0}\|_{L^{2}} \notag\\
		&\leq
		\ln(\bar{\rho}+\delta)+\ln\frac{3}{2},
	\end{align}
	where the smallness of $\varepsilon$ in \eqref{cp1} has been used in the last step. Letting $\delta\to0$ in \eqref{cs4} yields \eqref{xz2}.
\end{proof}

\begin{lemma}\label{sca}
	Under the assumption \eqref{cp1}, it holds that, for any $(x,t)\in\mathbb{R}^{3}\times[0,T]$,
	\begin{align*}
		\bar{\rho}^{3}	\|\sqrt{\rho}u(t)\|_{L^{2}}^{2}
		\|\nabla u(t)\|_{L^{2}}^{2}
		\leq \frac{3}{2}\varepsilon.
	\end{align*}
\end{lemma}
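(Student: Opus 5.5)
The bound follows by combining the two time-uniform estimates already available under \eqref{cp1}. By Lemma \ref{lem1}, \eqref{X0} gives, for every $t\in[0,T]$,
\begin{align*}
\|\sqrt{\rho}u(t)\|_{L^{2}}^{2}\leq C\|\sqrt{\rho_{0}}u_{0}\|_{L^{2}}^{2}.
\end{align*}
By Lemma \ref{co1}, \eqref{xxxx1} gives
\begin{align*}
\|\nabla u(t)\|_{L^{2}}^{2}\leq C\|\nabla u_{0}\|_{L^{2}}^{2}.
\end{align*}
The constants in both lemmas depend only on $\mu$ and $\lambda$. This is clear for the energy identity. For Lemma \ref{co1}, the constant is fixed once $\varepsilon\le\varepsilon_1$ guarantees \eqref{xxxxx2}.

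Multiplying the two bounds by $\bar{\rho}^{3}$ yields
\begin{align*}
\bar{\rho}^{3}\|\sqrt{\rho}u(t)\|_{L^{2}}^{2}\|\nabla u(t)\|_{L^{2}}^{2}
\leq C_{1}\,\bar{\rho}^{3}\|\sqrt{\rho_{0}}u_{0}\|_{L^{2}}^{2}\|\nabla u_{0}\|_{L^{2}}^{2}
\leq C_{1}\varepsilon_{0},
\end{align*}
where the last step uses the scaling-invariant smallness hypothesis \eqref{csca1}. It then suffices to choose $c_{0}$ in Proposition \ref{pro} so large that $C_{1}\varepsilon_{0}\le \frac{3}{2}c_{0}\varepsilon_{0}\le\frac32\varepsilon$, i.e.\ $c_0\ge \tfrac{2}{3}C_1$; this gives the claim.

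The same computation also controls the second term in \eqref{cp1}, since
\begin{align*}
\bar{\rho}^{3}\|\nabla u(t)\|_{L^2}^2\int_0^T\|\nabla u\|_{L^2}^2\,dt\le C\bar{\rho}^{3}\|\nabla u_0\|_{L^2}^2\|\sqrt{\rho_0}u_0\|_{L^2}^2\le C\varepsilon_0.
\end{align*}
One can therefore fix $c_0$ to absorb both constants at once, which is consistent with the reference to \eqref{c0}.

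The only point needing care is bookkeeping, not analysis. We must check that $C_1$ is genuinely independent of $\varepsilon$, $T$ and the data, so that the upper threshold $\varepsilon_1$ (from \eqref{xxxxx2} and \eqref{cs4}) and the lower threshold $c_0\varepsilon_0$ are compatible. This requires $\varepsilon_0\le \varepsilon_1/c_0$, which is where $\varepsilon_0$ is finally fixed. No further estimates are required.
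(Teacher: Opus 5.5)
Your proposal is correct and matches the paper's proof. You bound $\|\sqrt{\rho}u(t)\|_{L^2}^2$ via \eqref{X0} and $\|\nabla u(t)\|_{L^2}^2$ via \eqref{xxxx1}, multiply by $\bar{\rho}^{3}$, invoke \eqref{csca1}, and take $c_0\ge \frac{2}{3}C$ as in \eqref{c0}; the paper likewise bounds both terms of \eqref{cp1} in one display, and your remark that the admissible range needs $c_0\varepsilon_0\le\varepsilon_1$ makes explicit what the paper leaves implicit.
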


\begin{proof}
Combining \eqref{X0} multiplied by $\bar{\rho}^{3}$ with \eqref{xxxx1}, we have
	\begin{align*}
	 \bar{\rho}^{3}\|\sqrt{\rho}u(t)\|_{L^{2}}^{2}
	 \|\nabla u(t)\|_{L^{2}}^{2}+\bar{\rho}^{3}\|\nabla u(t)\|_{L^{2}}^{2}\int_{0}^{T}\|\nabla u\|_{L^{2}}^{2}\,dt \leq
		C\bar{\rho}^{3}
		\|\sqrt{\rho_{0}}u_{0}\|_{L^{2}}^{2}
		\|\nabla u_{0}\|_{L^{2}}^{2}
		\leq C\varepsilon_{0}.
	\end{align*}
	Choose
	\begin{align}\label{c0}
		c_{0}\triangleq \max\left\{\frac{1}{2},\frac{2C}{3}\right\}.
	\end{align}
	Since $c_{0}\varepsilon_{0}\leq \varepsilon$, we have
	$C\varepsilon_{0}\leq \frac{3}{2}\varepsilon$. This proves the desired estimate.
\end{proof}

\textbf{Proof of Proposition \ref{pro}.}
The density estimate in Lemma \ref{rho} and the scaling invariant estimate in
Lemma \ref{sca} close the bootstrap argument. Therefore, Proposition \ref{pro} follows.

\subsection{A \textit{ priori}  estimates II: time weighted estimates}\label{sub2}

Throughout this subsection, we assume that $(\rho,u)$ is a strong solution to the problem \eqref{PNS}, \eqref{a2}, and \eqref{a3} in $\mathbb{R}^{3}\times(0,T]$ for some $T>0$, and the bootstrap assumptions
\eqref{cp1} hold. Here, $C$ denotes a generic positive constant depending on the initial data and independent of $T$. We will derive higher-order estimates which are needed to establish the time continuity of the solution.

As a direct consequence of the estimates obtained in Subsection \ref{sub1}, we
have the following corollary.

\begin{corollary}\label{cor-lower}
	It holds that
	\begin{align}\label{czui1}
\sup_{0 \leq t \leq T}	\left( \|\sqrt{\rho}u(t)\|_{L^{2}}^{2}+\|\nabla u(t)\|_{L^{2}}^{2}
		+\|\rho(t)\|_{L^{\infty}}\right)  +\int_{0}^{T}\left( \|\nabla u\|_{L^{2}}^{2}+\|\sqrt{\rho}\dot{u}\|_{L^{2}}^{2}\right) \,dt	\leq C.
	\end{align}
\end{corollary}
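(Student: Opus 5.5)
This corollary is a direct assembly of the estimates from Subsection \ref{sub1}, keeping in mind that $C$ now may depend on the initial data. Under the bootstrap assumption \eqref{cp1}, which holds throughout this subsection, three earlier results give everything we need.

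First, the basic energy estimate \eqref{X0} of Lemma \ref{lem1} gives
\begin{align*}
\sup_{0\le t\le T}\|\sqrt{\rho}u(t)\|_{L^{2}}^{2}+\int_{0}^{T}\|\nabla u\|_{L^{2}}^{2}\,dt\le C\|\sqrt{\rho_{0}}u_{0}\|_{L^{2}}^{2}.
\end{align*}
The right-hand side is finite: by \eqref{xz} and Sobolev's inequality, $\rho_0\in L^{3/2}\cap L^\infty$ and $u_0\in L^6$, so $\|\sqrt{\rho_0}u_0\|_{L^2}^2\le \|\rho_0\|_{L^{3/2}}\|u_0\|_{L^6}^2<\infty$.

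Second, Lemma \ref{co1} gives the bound \eqref{xxxx1},
\begin{align*}
\sup_{0\le t\le T}\|\nabla u(t)\|_{L^{2}}^{2}+\int_{0}^{T}\|\sqrt{\rho}\dot u\|_{L^{2}}^{2}\,dt\le C\|\nabla u_{0}\|_{L^{2}}^{2},
\end{align*}
which is finite since $u_0\in D^1$.

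Third, Lemma \ref{rho} yields $\|\rho(t)\|_{L^\infty}\le \frac32\bar\rho$ for all $t\in[0,T]$. The quantity $\bar\rho=\|\rho_0\|_{L^\infty}$ is finite because $W^{1,q}\hookrightarrow L^\infty$ for $q>3$.

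Summing these three bounds gives \eqref{czui1}, with $C$ depending only on $\mu$, $\lambda$, $\|\sqrt{\rho_0}u_0\|_{L^2}$, $\|\nabla u_0\|_{L^2}$ and $\bar\rho$, and independent of $T$. There is no real obstacle here. The only point needing care is that every constant inherited from Subsection \ref{sub1} is uniform in $T$, which holds because those constants depend only on $\mu,\lambda$ and the bootstrap constant $\varepsilon$.
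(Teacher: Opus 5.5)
Your proposal is correct and matches the paper, which states the corollary without proof as a direct consequence of Subsection 3.1: you combine \eqref{X0}, \eqref{xxxx1} and \eqref{xz2} under the bootstrap assumption \eqref{cp1}, so $C$ depends on the initial data but not on $T$. Note that your finiteness check $\|\sqrt{\rho_0}u_0\|_{L^2}^2\le\|\rho_0\|_{L^{3/2}}\|u_0\|_{L^6}^2$ uses $\rho_0\in L^{3/2}$, so it covers only the case $\rho_\infty=0$, which is the case this subsection treats.
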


Now, we establish the following time-weighted estimates for the solution.
\begin{lemma}\label{clem51}
Under the condition \eqref{czui1}, it holds that
	\begin{align}\label{cuu0}
		\sup _{0 \leq t \leq T}\left( t   \|\nabla u(t)\|_{L^{2}}^{2} \right)  +\int_{0}^{T} t  \|\sqrt{\rho}\dot{u}\|_{L^{2}}^{2}  \, d t \leq C.
	\end{align}
\end{lemma}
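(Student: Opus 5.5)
The plan is to repeat the energy estimate of Lemma \ref{lem2} with the weight $t$. In the proof of Lemma \ref{lem2}, the inequality obtained after substituting \eqref{xxx1} into \eqref{xxx2} reads
\begin{align*}
\frac{d}{dt}\Big(\mu\|\nabla u\|_{L^{2}}^{2}+(\mu+\lambda)\|\operatorname{div}u\|_{L^{2}}^{2}\Big)+\|\sqrt{\rho}\dot{u}\|_{L^{2}}^{2}\leq C\bar{\rho}^{3}\|\nabla u\|_{L^{2}}^{6}.
\end{align*}
This inequality used only the density bound from \eqref{cp1} and the elliptic estimate \eqref{ceq5}, so it remains valid under \eqref{czui1}. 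I will multiply it by $t$ and use
\begin{align*}
\frac{d}{dt}\big(t\,\Phi(t)\big)=t\,\Phi'(t)+\Phi(t),\qquad \Phi\triangleq\mu\|\nabla u\|_{L^{2}}^{2}+(\mu+\lambda)\|\operatorname{div}u\|_{L^{2}}^{2}.
\end{align*}
This gives
\begin{align*}
\frac{d}{dt}\big(t\,\Phi\big)+t\|\sqrt{\rho}\dot{u}\|_{L^{2}}^{2}\leq \Phi+C\bar{\rho}^{3}\,t\,\|\nabla u\|_{L^{2}}^{6}.
\end{align*}
The constraint $2\mu+3\lambda\ge0$ ensures $\Phi\ge c\|\nabla u\|_{L^2}^2$ for some $c>0$. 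This follows from the decomposition $\|\nabla u\|_{L^2}^2=\|\operatorname{curl}u\|_{L^2}^2+\|\operatorname{div}u\|_{L^2}^2$. Hence the bound
\begin{align*}
\Phi\le C\|\nabla u\|_{L^2}^2
\end{align*}
also holds.

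Next I integrate over $(0,T)$. The boundary term at $t=0$ vanishes because of the factor $t$. The term $\int_0^T\Phi\,dt\le C\int_0^T\|\nabla u\|_{L^2}^2dt$ is bounded by \eqref{czui1}, or by \eqref{X0}. The remaining term is handled by
\begin{align*}
C\bar\rho^3\int_0^T t\|\nabla u\|_{L^2}^6\,dt\le C\Big(\sup_{0\le t\le T}t\|\nabla u\|_{L^2}^2\Big)\sup_{0\le t\le T}\|\nabla u\|_{L^2}^2\int_0^T\|\nabla u\|_{L^2}^2\,dt.
\end{align*}
There are two ways to close this. The first uses the scaling-invariant smallness in \eqref{cp1}: the product $\bar\rho^3\sup\|\nabla u\|_{L^2}^2\int_0^T\|\nabla u\|_{L^2}^2dt\le 2\varepsilon$ is small, so this term is at most $\tfrac{c}{2}\sup_t t\|\nabla u\|_{L^2}^2$ and can be absorbed into the left-hand side. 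The second avoids smallness altogether: bound the term by $C\int_0^T\|\nabla u\|_{L^2}^2\cdot(t\Phi)\,dt$ and apply Gronwall's inequality. Since $\int_0^T\|\nabla u\|_{L^2}^2dt\le C$, the resulting constant is independent of $T$. Either route yields \eqref{cuu0}.

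The only delicate point is making sure that the constant is independent of $T$. This is exactly why the term $t\|\nabla u\|^6_{L^2}$ must be handled through the time-integrable factor $\|\nabla u\|_{L^2}^2$, or through the absorption argument, and not bounded crudely by $Ct$. The estimate is otherwise routine. Strictly speaking, the differential inequality should be derived for the smooth approximations or with sufficient regularity of the strong solution to justify multiplying \eqref{PNS}$_2$ by $u_t$. I will take this as given, as in Lemma \ref{lem2}.
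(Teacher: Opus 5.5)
Your proposal is correct, and your second (Gronwall) route is essentially the paper's proof. The paper uses \eqref{czui1} to reduce the right-hand side coming from \eqref{xxx2}--\eqref{xxx1} to $C\|\nabla u\|_{L^{2}}^{4}$, multiplies by $t$, and closes with Gronwall's inequality via the time-integrable factor $\|\nabla u\|_{L^{2}}^{2}$, exactly as you describe. Your absorption route also works, but it needs the smallness in \eqref{cp1}, not \eqref{czui1} alone.
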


\begin{proof}
Using \eqref{xxx2}, \eqref{xxx1}, and \eqref{czui1}, one has that
	\begin{align}\label{cuu8}
	\frac{d}{d t}  \left(  \mu \|\nabla u\|_{L^{2}}^{2}+( \mu+\lambda)\|\operatorname{div} u\|_{L^{2}}^{2}\right) + \|\sqrt{\rho}\dot{u}\|_{L^{2}}^{2} \leq  C   \|\nabla u\|_{L^{2}}^{4}.
	\end{align}
	Then, multiplying \eqref{cuu8} by  $t$ and applying Gronwall's inequality, we obtain \eqref{cuu0}.
\end{proof}

\begin{lemma}\label{clem52}
	Under the condition \eqref{czui1}, it holds that
	\begin{align}\label{cll0}
		\sup _{0 \leq t \leq T} \left( t^{i} \| \sqrt{\rho} \dot{u}(t) \|_{L^{2}}^{2}\right) +\int_{0}^{T} t^{i} \| \nabla \dot{u} \|_{L^{2}}^{2} \, d t \leq C, \quad i=1,2.
	\end{align}
\end{lemma}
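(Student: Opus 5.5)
The plan is the standard Hoff-type material-derivative estimate for the pressureless system. Apply $\dot u^{j}\big[\partial_t+\operatorname{div}(u\,\cdot)\big]$ to the $j$-th component of \eqref{PNS}$_2$, written as $\rho\dot u=\mu\Delta u+(\mu+\lambda)\nabla\operatorname{div}u$, and integrate over $\mathbb{R}^3$. The continuity equation gives
\begin{align*}
\frac12\frac{d}{dt}\|\sqrt{\rho}\dot u\|_{L^2}^2=\int \dot u^{j}\Big[\mu\big(\partial_t\Delta u^{j}+\operatorname{div}(u\,\Delta u^{j})\big)+(\mu+\lambda)\big(\partial_t\partial_j\operatorname{div}u+\operatorname{div}(u\,\partial_j\operatorname{div}u)\big)\Big]dx .
\end{align*}
There is no pressure contribution, so only the two viscous terms need handling. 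Integrating by parts and commuting $\partial_t$ with $u\cdot\nabla$ should give
\begin{align*}
\frac{d}{dt}\|\sqrt{\rho}\dot u\|_{L^2}^2+\mu\|\nabla\dot u\|_{L^2}^2+(\mu+\lambda)\|\operatorname{div}\dot u\|_{L^2}^2\le C\int|\nabla u|^{2}|\nabla\dot u|\,dx\le \frac{\mu}{2}\|\nabla\dot u\|_{L^2}^2+C\|\nabla u\|_{L^4}^4 .
\end{align*}

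To control the right-hand side, use Gagliardo--Nirenberg together with \eqref{ceq4}:
\begin{align*}
\|\nabla u\|_{L^4}^4\le C\|\nabla u\|_{L^2}\|\nabla u\|_{L^6}^3\le C\|\nabla u\|_{L^2}\|\rho\dot u\|_{L^2}^3 .
\end{align*}
Since $\rho\le C$ by \eqref{czui1}, we have $\|\rho\dot u\|_{L^2}\le C\|\sqrt\rho\dot u\|_{L^2}$. Using also $\sup_t\|\nabla u\|_{L^2}\le C$, this yields
\begin{align*}
\frac{d}{dt}\|\sqrt{\rho}\dot u\|_{L^2}^2+\|\nabla\dot u\|_{L^2}^2\le C\|\sqrt{\rho}\dot u\|_{L^2}^2\cdot\|\sqrt{\rho}\dot u\|_{L^2}^2 .
\end{align*}

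Next multiply by $t^{i}$. The extra term $i t^{i-1}\|\sqrt\rho\dot u\|_{L^2}^2$ is integrable in time: for $i=1$ this follows from \eqref{czui1}, and for $i=2$ from \eqref{cuu0}. The weight also makes $t^i\|\sqrt\rho\dot u\|_{L^2}^2$ vanish at $t=0$, so no compatibility condition is needed. The Gronwall coefficient $\|\sqrt\rho\dot u\|_{L^2}^2$ lies in $L^1(0,T)$ with a $T$-independent bound by \eqref{czui1}. Gronwall's inequality then gives \eqref{cll0} uniformly in $T$.

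The main obstacle is the commutator computation in the first step. One must check that every term produced by $\partial_t$ and $\operatorname{div}(u\,\cdot)$ acting on the Lamé operator reduces, after integration by parts, to a bound of the form $\int|\nabla u|^2|\nabla\dot u|\,dx$. This is the same structure as in Hoff's argument and in \cite{HLX12} with $P\equiv0$. A secondary technical point is justifying the computation at the regularity level of Definition \ref{def}. For this I would mollify the initial data, derive the estimate for smooth solutions, and pass to the limit; the bounds are uniform since they depend only on \eqref{czui1}.
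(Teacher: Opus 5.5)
Your route is the same as the paper's: the material-derivative identity with only the two viscous terms, the bound $C\|\nabla u\|_{L^4}^4$, interpolation with \eqref{ceq4}, then $t^i$-weights and Gronwall. However, one step is wrong as written. From $C\|\nabla u\|_{L^2}\|\sqrt{\rho}\dot u\|_{L^2}^3$ you drop $\|\nabla u\|_{L^2}$ using its sup bound and then claim the bound $C\|\sqrt{\rho}\dot u\|_{L^2}^2\cdot\|\sqrt{\rho}\dot u\|_{L^2}^2$. But $a^3\le Ca^4$ fails whenever $a=\|\sqrt{\rho}\dot u\|_{L^2}$ is small, so the differential inequality you feed into Gronwall is not justified.

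The naive repair $a^3\le a^2+a^4$ produces an extra term $Ct^{i}\|\sqrt{\rho}\dot u\|_{L^2}^2$. For $i=1$ this is harmless by \eqref{cuu0}. For $i=2$, however, you would need $\int_0^T t^{2}\|\sqrt{\rho}\dot u\|_{L^2}^2\,dt\le C$ with $C$ independent of $T$, and neither \eqref{czui1} nor \eqref{cuu0} provides this. Putting that term into the Gronwall coefficient instead gives a factor $e^{CT}$. Either way, the $T$-uniformity you claim would be lost.

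The correct step, which is what the paper does, keeps the time-integrable factor $\|\nabla u\|_{L^2}$ and applies Young's inequality $\|\nabla u\|_{L^2}a\le\frac12\big(\|\nabla u\|_{L^2}^2+a^2\big)$:
\begin{align*}
C\|\nabla u\|_{L^2}\|\sqrt{\rho}\dot u\|_{L^2}^3\le C\|\sqrt{\rho}\dot u\|_{L^2}^2\big(\|\nabla u\|_{L^2}^2+\|\sqrt{\rho}\dot u\|_{L^2}^2\big).
\end{align*}
The Gronwall coefficient is then $\|\nabla u\|_{L^2}^2+\|\sqrt{\rho}\dot u\|_{L^2}^2$, which is bounded in $L^1(0,T)$ independently of $T$ by \eqref{czui1}. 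With this correction, the rest of your argument goes through exactly as in the paper: the sources are $\|\sqrt{\rho}\dot u\|_{L^2}^2$ for $i=1$ and $t\|\sqrt{\rho}\dot u\|_{L^2}^2$ for $i=2$, controlled by \eqref{czui1} and \eqref{cuu0} respectively.
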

\begin{proof}
Applying the operator  $\partial_{t}+\operatorname{div}(u \cdot)$  to the  $j$-th component of  \eqref{PNS}$_{2}$  and multiplying the resultant equation by  $\dot{u}^{j}$, we obtain, after a direct calculation, that
	\begin{align}\label{cll5}
		&\frac{1}{2} \frac{d}{d t} \| \sqrt{\rho}\dot{u}\|_{L^{2}}^{2} \notag\\
		&=\mu \int \dot{u}^{j}\Big(\partial_{t} \Delta u^{j}+\operatorname{div}\big(u \Delta u^{j}\big)\Big) \, d x+(\mu+\lambda) \int \dot{u}^{j}\Big(\partial_{t} \partial_{j}(\operatorname{div} u)+\operatorname{div}\big(u \partial_{j}(\operatorname{div} u)\big)\Big) \, d x   \triangleq \sum_{i=1}^{2} K_{i}.
	\end{align}

	By a standard calculation, we first have
	\begin{align*}
		K_{1}
		&=-\mu \int\Big(\partial_{i} \dot{u}^{j} \partial_{t} \partial_{i} u^{j}
		+\Delta u^{j} u \cdot \nabla \dot{u}^{j}\Big) \, dx  \\
		&=-\mu \int\Big(
		|\nabla \dot{u}|^{2}
		-\partial_{i}\dot{u}^{j}u^{k}\partial_{k}\partial_{i}u^{j}
		-\partial_{i}\dot{u}^{j}\partial_{i}u^{k}\partial_{k}u^{j}
		+\Delta u^{j}u\cdot\nabla\dot{u}^{j}
		\Big)\,dx  \\
		&=-\mu \int\Big(
		|\nabla \dot{u}|^{2}
		+\partial_{i}\dot{u}^{j}\partial_{k}u^{k}\partial_{i}u^{j}
		-\partial_{i}\dot{u}^{j}\partial_{i}u^{k}\partial_{k}u^{j}
		-\partial_{i}u^{j}\partial_{i}u^{k}\partial_{k}\dot{u}^{j}
		\Big)\,dx.
	\end{align*}
	Hence, by H\"older's inequality and Young's inequality, one obtains
	\begin{align}\label{cll-k1}
		K_{1}
		\leq
		-\mu\|\nabla \dot{u}\|_{L^{2}}^{2}
		+C\int |\nabla \dot{u}|\,|\nabla u|^{2}\,dx
		\leq
		-\frac{7\mu}{8}\|\nabla \dot{u}\|_{L^{2}}^{2}
		+C\|\nabla u\|_{L^{4}}^{4}.
	\end{align}	
	For $K_{2}$, we use the identity
	\begin{align*}
		\operatorname{div}\dot{u}
	 =\partial_{j}\big(u_{t}^{j}+u^{k}\partial_{k}u^{j}\big)   =(\operatorname{div}u)_{t}
		+u^{k}\partial_{k}(\operatorname{div}u)
		+\partial_{j}u^{k}\partial_{k}u^{j}.
	\end{align*}
	Then
	\begin{align*}
		K_{2}
		&=(\mu+\lambda)\int \dot{u}^{j}
		\Big(
		\partial_{t}\partial_{j}(\operatorname{div}u)
		+\operatorname{div}\big(u\partial_{j}(\operatorname{div}u)\big)
		\Big)\,dx  \\
		&=-(\mu+\lambda)\int
		\Big(
		\partial_{j}\dot{u}^{j}(\operatorname{div}u)_{t}
		+u^{k}\partial_{k}\dot{u}^{j}\partial_{j}(\operatorname{div}u)
		\Big)\,dx  \\
		&=-(\mu+\lambda)\int |\operatorname{div}\dot{u}|^{2}\,dx
		+(\mu+\lambda)\int
		\operatorname{div}\dot{u}\,
		u^{k}\partial_{k}(\operatorname{div}u)\,dx  \\
		&\quad
		+(\mu+\lambda)\int
		\operatorname{div}\dot{u}\,
		\partial_{i}u^{k}\partial_{k}u^{i}\,dx
		-(\mu+\lambda)\int
		u^{k}\partial_{k}\dot{u}^{j}\partial_{j}(\operatorname{div}u)\,dx.
	\end{align*}
	For the two terms containing $u^{k}\partial_{k}(\operatorname{div}u)$ and
	$u^{k}\partial_{k}\dot{u}^{j}\partial_{j}(\operatorname{div}u)$, integration by parts gives
	\begin{align*}
		\int
		\operatorname{div}\dot{u}\,
		u^{k}\partial_{k}(\operatorname{div}u)\,dx
		-\int
		u^{k}\partial_{k}\dot{u}^{j}\partial_{j}(\operatorname{div}u)\,dx
		 =
		-\int (\operatorname{div}u)^{2}\operatorname{div}\dot{u}\,dx
		+\int
		\operatorname{div}u\,
		\partial_{j}u^{k}\partial_{k}\dot{u}^{j}\,dx.
	\end{align*}
	Therefore, one deduces from H\"older's inequality and Young's inequality that
	\begin{align}\label{cll-k2}
		K_{2}
		&=-(\mu+\lambda)\|\operatorname{div}\dot{u}\|_{L^{2}}^{2}
		+(\mu+\lambda)\int
		\operatorname{div}\dot{u}
		\Big(
		\partial_{i}u^{k}\partial_{k}u^{i}
		-(\operatorname{div}u)^{2}
		\Big)\,dx  \notag\\
		&\quad
		+(\mu+\lambda)\int
		\operatorname{div}u\,
		\partial_{j}u^{k}\partial_{k}\dot{u}^{j}\,dx\notag\\
		&\leq
		-(\mu+\lambda)\|\operatorname{div}\dot{u}\|_{L^{2}}^{2}
		+C\int |\operatorname{div}\dot{u}|\,|\nabla u|^{2}\,dx
		+C\int |\nabla\dot{u}|\,|\nabla u|^{2}\,dx   \notag\\
		&\leq
		-\frac{\mu+\lambda}{2}\|\operatorname{div}\dot{u}\|_{L^{2}}^{2}
		+\frac{\mu}{8}\|\nabla\dot{u}\|_{L^{2}}^{2}
		+C\|\nabla u\|_{L^{4}}^{4}.
	\end{align}	
	Substituting \eqref{cll-k1} and \eqref{cll-k2} into \eqref{cll5}, and using \eqref{czui1} and \eqref{ceq5}, we arrive at
	\begin{align}\label{cll6}
		\frac{d}{d t}\|\sqrt{\rho} \dot{u}\|_{L^{2}}^{2}
		+\|\nabla \dot{u}\|_{L^{2}}^{2}
		&\leq C\|\nabla u\|_{L^{4}}^{4} \notag\\
		&\leq C \|\nabla u \|_{L^{2}} \|\nabla u \|_{L^{6}}^{3} \notag\\[1mm]
		&\leq C \|\nabla u \|_{L^{2}}  \|\sqrt{\rho}\dot{u}\|_{L^{2}}^{3} \notag\\[1mm]
		&\leq C \|\sqrt{\rho}\dot{u}\|_{L^{2}}^{2}
		\left(
		\|\nabla u \|_{L^{2}}^{2}
		+\|\sqrt{\rho}\dot{u}\|_{L^{2}}^{2}
		\right).
	\end{align}

Multiplying \eqref{cll6} by $t$ and integrating the resultant over $(0,T)$, we obtain \eqref{cll0} with $i=1$ by Gronwall's inequality  and  \eqref{czui1}. Multiplying \eqref{cll6} by $t^{2}$,  we obtain
	\begin{align*}
		\frac{d}{d t} \left( t^{2}\|\sqrt{\rho} \dot{u}\|_{L^{2}}^{2}\right) +t^{2} \| \nabla \dot{u}  \|_{L^{2}}^{2}  &\leq Ct\|\sqrt{\rho} \dot{u}\|_{L^{2}}^{2}+  C t^{2}\|\sqrt{\rho}\dot{u}\|_{L^{2}}^{2}\left( \|\nabla u \|_{L^{2}}^{2} + \|\sqrt{\rho}\dot{u}\|_{L^{2}}^{2}\right).
	\end{align*}	
Using \eqref{czui1}, \eqref{cuu0}, and Gronwall's inequality, we obtain \eqref{cll0} with $i=2$.  This completes the proof.	
\end{proof}

\begin{lemma}\label{clem53}
	Under the condition \eqref{czui1}, it holds that
	\begin{gather}
		\sup _{0 \leq t \leq T} \left( t \| \nabla^{2} u(t)\|_{L^{2}}^{2}+\|  \rho_{t} (t) \|_{L^{2}}^{2}\right) +\int_{0}^{T} t\| \nabla u_{t}  \|_{L^{2}}^{2}\, d t \leq C,\label{clll1}	\\
		\sup _{0 \leq t \leq T}\left( \|\nabla \rho(t)\|_{L^{2}}+\|\nabla \rho(t)\|_{L^{q}}\right) +\int_{0}^{T}\left( \|\nabla^{2} u \|_{L^{q}}+ \|\nabla^{2} u \|_{L^{2}}+\|\nabla u\|_{L^{\infty}} \right)\, d t \leq C.\label{clll0}
	\end{gather}
\end{lemma}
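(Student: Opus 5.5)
We first obtain the time-weighted bound on $\nabla^{2}u$ in \eqref{clll1}. By \eqref{ceq5} with $p=2$ (equivalently, the elliptic estimate \eqref{do1}), we have $\|\nabla^{2}u\|_{L^{2}}\le C\|\rho\dot u\|_{L^{2}}\le C\|\sqrt{\rho}\dot u\|_{L^{2}}$, since $\rho$ is bounded by \eqref{czui1}. Combined with \eqref{cll0} for $i=1$, this gives $\sup_t t\|\nabla^{2}u\|_{L^{2}}^{2}\le C$. For $\nabla u_{t}$ we use $\nabla u_{t}=\nabla\dot u-\nabla(u\cdot\nabla u)$ and the bound
$$\|\nabla(u\cdot\nabla u)\|_{L^{2}}\le \|u\|_{L^{6}}\|\nabla^{2}u\|_{L^{3}}+\|\nabla u\|_{L^{4}}^{2}\le C\|\nabla u\|_{L^{2}}\|\nabla^{2}u\|_{L^{2}}^{1/2}\|\nabla^{2}u\|_{L^{6}}^{1/2}+C\|\nabla u\|_{L^{2}}^{1/2}\|\nabla u\|_{L^{6}}^{3/2}.$$
Here \eqref{ceq5} with $p=6$ gives $\|\nabla^{2}u\|_{L^{6}}\le C\|\rho\dot u\|_{L^{6}}\le C\|\nabla\dot u\|_{L^{2}}$. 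Multiplying by $t$ and integrating in time, Young's inequality together with \eqref{czui1}, \eqref{cuu0} and \eqref{cll0} bounds $\int_0^T t\|\nabla u_{t}\|_{L^{2}}^{2}\,dt$. The estimate for $\rho_{t}$ is postponed: from $\rho_{t}=-u\cdot\nabla\rho-\rho\operatorname{div}u$ we get $\|\rho_{t}\|_{L^{2}}\le C\|\nabla u\|_{L^{2}}\|\nabla\rho\|_{L^{3}}+C\|\nabla u\|_{L^{2}}$, so it follows once $\nabla\rho\in L^{2}\cap L^{q}$ is controlled.

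The core of the proof is \eqref{clll0}. We differentiate the mass equation, multiply by $|\nabla\rho|^{r-2}\nabla\rho$ for $r\in\{2,q\}$, and obtain
$$\frac{d}{dt}\|\nabla\rho\|_{L^{r}}\le C\|\nabla u\|_{L^{\infty}}\|\nabla\rho\|_{L^{r}}+C\|\nabla^{2}u\|_{L^{r}}.$$
By \eqref{ceq5}, $\|\nabla^{2}u\|_{L^{r}}\le C\|\rho\dot u\|_{L^{r}}$, so no $\nabla\rho$ appears in the forcing; this is where the pressureless case is simpler. To control $\|\nabla u\|_{L^{\infty}}$ we use the logarithmic inequality \eqref{cinf0}:
$$\|\nabla u\|_{L^{\infty}}\le C\ln(e+\|\nabla\rho\|_{L^{q}})\bigl(\|\operatorname{curl}u\|_{L^{\infty}}+\|\operatorname{div}u\|_{L^{\infty}}\bigr)+C\|\rho\dot u\|_{L^{2}}+C\|\rho\dot u\|_{L^{q}}.$$
Gagliardo--Nirenberg and \eqref{ceq5} give
$$\|\operatorname{curl}u\|_{L^{\infty}}+\|\operatorname{div}u\|_{L^{\infty}}\le C\|\nabla u\|_{L^{6}}^{\theta}\|\rho\dot u\|_{L^{q}}^{1-\theta},$$
and interpolation gives $\|\rho\dot u\|_{L^{q}}\le C\|\sqrt\rho\dot u\|_{L^{2}}^{(6-q)/(2q)}\|\nabla\dot u\|_{L^{2}}^{(3q-6)/(2q)}$. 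Set $f(t)\triangleq\|\rho\dot u\|_{L^{2}}+\|\rho\dot u\|_{L^{q}}$.

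The main obstacle is to show $f\in L^{1}(0,T)$ uniformly in $T$, using only the weighted bounds \eqref{cll0} with $i=1,2$. The difficulty is that $\sqrt\rho\dot u$ is not assumed bounded at $t=0$, because $u_0\in D^1$ only. We split the time interval. On $(0,1)$ we use the weight $t$: $\int_0^1\|\rho\dot u\|_{L^{q}}\,dt\le C\int_0^1 t^{-1/2}(t\|\sqrt\rho\dot u\|_{L^{2}}^{2})^{\frac{6-q}{4q}}(t\|\nabla\dot u\|_{L^{2}}^{2})^{\frac{3q-6}{4q}}dt$, and this is finite by Hölder since $q<6$ makes the power of $t^{-1}$ integrable. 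On $(1,T)$ we use the weight $t^{2}$ from \eqref{cll0} with $i=2$: the factor $t^{-1}$ raised to a power, combined with Hölder against $\int t^{2}\|\nabla\dot u\|^{2}$, gives a $T$-independent bound provided the total time exponent exceeds $1$. If this falls short, we would interpolate once more with the unweighted $L^{2}_tL^{2}$ bound on $\sqrt\rho\dot u$ from \eqref{czui1}. Once $f\in L^{1}$, we set $Z(t)=e+\|\nabla\rho\|_{L^{2}}+\|\nabla\rho\|_{L^{q}}$. The differential inequality becomes $\frac{d}{dt}\ln Z\le Cf(t)(1+\ln Z)$, which is a log-Gronwall inequality and yields $\sup_t Z\le C$. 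Going back, this gives $\int_0^T\|\nabla u\|_{L^{\infty}}\,dt\le C$, and \eqref{ceq5} gives the $L^{1}_t$ bounds on $\|\nabla^{2}u\|_{L^{q}}$ and $\|\nabla^{2}u\|_{L^{2}}$. Finally, the estimate for $\rho_{t}$ in \eqref{clll1} follows as noted above.
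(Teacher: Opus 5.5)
Your route is the paper's: the transport estimate for $\nabla\rho$ in $L^2\cap L^q$, the logarithmic inequality \eqref{cinf0}, splitting time at $1$, and then $\rho_t$ and $\nabla u_t$. Your handling of $\int_0^T\|\rho\dot u\|_{L^q}\,dt$ and of $\int_0^T t\|\nabla u_t\|_{L^2}^2\,dt$ is correct.

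The gap is the other half of $f$. You never show that $\int_1^T\|\rho\dot u\|_{L^2}\,dt$ is bounded independently of $T$, and this term cannot be avoided. It controls the forcing $\|\nabla^2u\|_{L^2}$ in your $r=2$ transport estimate and the low-order term $\|\nabla u\|_{L^6}$ in \eqref{cinf0}. Moreover, since $\rho\dot u=\mu\Delta u+(\mu+\lambda)\nabla\operatorname{div}u$, the bound on $\int_0^T\|\nabla^2u\|_{L^2}\,dt$ required in \eqref{clll0} is equivalent to it.

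Your H\"older device against $\int t^2\|\nabla\dot u\|_{L^2}^2\,dt$ does not apply here, because $\|\rho\dot u\|_{L^2}$ carries no power of $\|\nabla\dot u\|_{L^2}$. The bounds you invoke (\eqref{czui1}, \eqref{cuu0}, \eqref{cll0}) only give $\int_1^T\|\sqrt\rho\dot u\|_{L^2}\,dt\le C(\ln T)^{1/2}$, and this cannot be improved from them alone. For example, $a(t)=1/(t\ln(e+t))$ satisfies $\sup_{t\ge 1}t^2a^2+\int_1^\infty(1+t)a^2\,dt<\infty$, while $\int_1^\infty a\,dt=\infty$. Interpolating once more with the unweighted $L^2_tL^2_x$ bound does not change this.

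The paper handles this in \eqref{clll6} by trading $\|\sqrt\rho\dot u\|_{L^2}$ for $\|\nabla\dot u\|_{L^2}$. That amounts to $\|\sqrt\rho\dot u\|_{L^2}\le\|\rho\|_{L^{3/2}}^{1/2}\|\dot u\|_{L^6}$, so it needs $\sup_t\|\rho\|_{L^{3/2}}<\infty$. This is where the hypothesis $\rho_0\in L^{3/2}$ enters; in the paper the bound is only recorded later, in \eqref{ccon3}, and it is derived from \eqref{clll10}.

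You can supply this bound non-circularly with your own interpolation idea.
\begin{itemize}
\item By Gagliardo--Nirenberg and \eqref{ceq5}, $\|\operatorname{div}u\|_{L^\infty}\le C\|\operatorname{div}u\|_{L^6}^{\theta}\|\nabla\operatorname{div}u\|_{L^q}^{1-\theta}\le C\|\sqrt\rho\dot u\|_{L^2}^{1-s}\|\nabla\dot u\|_{L^2}^{s}$, with $\theta=\frac{2(q-3)}{3(q-2)}$ and $s=(1-\theta)\frac{3q-6}{2q}\in(0,1)$.
\item Because a positive power of $\|\nabla\dot u\|_{L^2}$ appears, your H\"older arguments apply. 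On $(1,T)$ the integrand is at most $Ct^{-1}(t^2\|\nabla\dot u\|_{L^2}^2)^{s/2}$, so $\int_0^T\|\operatorname{div}u\|_{L^\infty}\,dt\le C$ uniformly in $T$.
\item The transport equation for $\rho$ then gives $\sup_t\|\rho\|_{L^{3/2}}\le C$. Hence $\|\rho\dot u\|_{L^2}\le C\|\nabla\dot u\|_{L^2}$, and $\int_1^T\|\rho\dot u\|_{L^2}\,dt\le C(\int_1^T t^2\|\nabla\dot u\|_{L^2}^2\,dt)^{1/2}$.
\item Your log-Gronwall argument then closes.
\end{itemize}

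If you only want global existence, $\int_0^T\|\rho\dot u\|_{L^2}\,dt\le CT^{1/2}$ suffices with $C=C(T)$. That loses the $T$-independence claimed in this lemma, which is used later for \eqref{cexp}.
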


\begin{proof}
Applying the spatial derivative $\nabla$ to the mass equation \eqref{PNS}$_1$, we obtain
	\begin{align*}
		0 =\partial_{t} \nabla \rho+u \cdot \nabla^{2} \rho+\nabla u \cdot \nabla \rho+\operatorname{div} u \nabla \rho+\rho \nabla \operatorname{div} u.
	\end{align*}
For $q\in(3,6)$, multiplying the above equality by $q|\nabla \rho|^{q-2}\nabla \rho$ and using \eqref{ceq5}, we get
	\begin{align*}
	 \frac{d}{d t}\|\nabla \rho\|_{L^{q}}^{q}& \leq C \int|\nabla u \| \nabla \rho|^{q} \, d x+C \int |\nabla \operatorname{div} u| |\nabla \rho|^{q-1} \, d x \\
		& \leq C\|\nabla u\|_{L^{\infty}}\|\nabla \rho\|_{L^{q}}^{q}+C\| \nabla \operatorname{div} u \|_{L^{q}}\|\nabla \rho \|_{L^{q}}^{q-1}\\[1mm]
			& \leq C\|\nabla u\|_{L^{\infty}}\|\nabla \rho\|_{L^{q}}^{q}+C\| \sqrt{\rho} \dot{u} \|_{L^{q}}\|\nabla \rho \|_{L^{q}}^{q-1},
	\end{align*}
which implies that
	\begin{align}\label{clll3}
		\frac{d}{d t}\|\nabla \rho\|_{L^{q}} \leq C\| \sqrt{\rho} \dot{u} \|_{L^{q}}+C\|\nabla u\|_{L^{\infty}}\|\nabla \rho\|_{L^{q}}.
	\end{align}
	By Gagliardo--Nirenberg inequality and \eqref{ceq5}, we have
	\begin{align*}
		\|\operatorname{div} u\|_{L^{\infty}} +\|\operatorname{curl} u\|_{L^{\infty}} & \leq C\left( \|\nabla \operatorname{div} u\|_{L^{2}}+\|\nabla \operatorname{curl} u\|_{L^{2}}+\|\nabla \operatorname{div} u\|_{L^{q}}+\| \nabla \operatorname{curl} u\|_{L^{q}}  \right)  \\
		& \leq C\left( \|\sqrt{\rho} \dot{u}\|_{L^{2}}+\| \sqrt{\rho} \dot{u} \|_{L^{q}}\right).
	\end{align*}
	Therefore, by \eqref{cinf0}, \eqref{ceq4}, and the elementary fact $\ln(e+\|\nabla \rho\|_{L^{q}})\geq 1$, we derive
	\begin{align}\label{clll4}
		\|\nabla u\|_{L^{\infty}} \leq C \ln\left( e+\|\nabla \rho\|_{L^{q}}\right) \left( \|\sqrt{\rho} \dot{u}\|_{L^{2}}+\| \sqrt{\rho} \dot{u} \|_{L^{q}}\right).
	\end{align}
	Substituting \eqref{clll4} into \eqref{clll3}, we find that, for any $q\in(3,6)$,
	\begin{align}\label{clll5}
		\frac{d}{d t} \ln \left( e+\|\nabla \rho\|_{L^{q}}\right)  \leq C \ln \left( e+\|\nabla \rho\|_{L^{q}}\right) \left( \|\sqrt{\rho} \dot{u}\|_{L^{2}}+\| \sqrt{\rho} \dot{u} \|_{L^{q}}\right).
	\end{align}

	To estimate the right-hand side of \eqref{clll5}, we set $\sigma(T)\triangleq \min\{1,T\}$. It infers from \eqref{czui1} and \eqref{cll0} that
	\begin{align}\label{clll6}
		\int_{0}^{T}\|\sqrt{\rho} \dot{u}\|_{L^{2}} \, d t&= \int_{0}^{\sigma(T)}\|\sqrt{\rho} \dot{u}\|_{L^{2}} \, d t + \int_{\sigma(T)}^{T} \|\sqrt{\rho} \dot{u}\|_{L^{2}} \, d t\notag\\
		&\leq C\left(\int_{0}^{\sigma(T)}\|\sqrt{\rho} \dot{u}\|_{L^{2}}^{2} \, d t\right)^{\frac{1}{2}}+C\left(\int_{\sigma(T)}^{T} t^{2}\|\nabla \dot{u}\|_{L^{2}}^{2}\, d t\right)^{\frac{1}{2}}\left(\int_{\sigma(T)}^{T} t^{-2}\, d t\right)^{\frac{1}{2}} \notag\\[2mm]
		&\leq C,
	\end{align}
and
\begin{align}\label{clll7}
	\int_{0}^{T}\|\sqrt{\rho}  \dot{u} \|_{L^{q}}\, d t&= \int_{0}^{\sigma(T)}\|\sqrt{\rho}  \dot{u} \|_{L^{q}}\, d t + \int_{\sigma(T)}^{T} \|\sqrt{\rho}  \dot{u} \|_{L^{q}} \, d t \notag\\
	&\leq C\int_{0}^{\sigma(T)}\| \sqrt{\rho}  \dot{u} \|_{L^{2}}^{\frac{6-q}{2q}}\| \sqrt{\rho}  \dot{u} \|_{L^{6}}^{\frac{3q-6}{2q}} \, d t + C\int_{\sigma(T)}^{T} \| \sqrt{\rho}  \dot{u} \|_{L^{2}}^{\frac{6-q}{2q}}\| \sqrt{\rho}  \dot{u} \|_{L^{6}}^{\frac{3q-6}{2q}} \, d t \notag\\
	&\leq C\left(\int_{0}^{\sigma(T)}\| \sqrt{\rho} \dot{u} \|_{L^{2}}^{2}\, d t\right)^{\frac{6-q}{4 q}}\left(\int_{0}^{\sigma(T)} t \| \nabla \dot{u} \|_{L^{2}}^{2} \, d t\right)^{\frac{3 q-6}{4 q}}\left(\int_{0}^{\sigma(T)} t^{-\frac{3 q-6}{2 q}}\, d t\right)^{\frac{1}{2}} \notag\\
	&\quad+C\left(\int_{\sigma(T)}^{T}t \| \sqrt{\rho} \dot{u} \|_{L^{2}}^{2} \, d t\right)^{\frac{6-q}{4 q}}\left(\int_{\sigma(T)}^{T} t^{2}\left\| \nabla \dot{u} \right\|_{L^{2}}^{2} \, d t\right)^{\frac{3 q-6}{4 q}}\left(\int_{\sigma(T)}^{T} t^{-\frac{5q-6}{2q}}\, d t\right)^{\frac{1}{2}}\notag\\[2mm]
	&\leq C.
\end{align}
Thus, one deduces from \eqref{clll5}, Gronwall's inequality, \eqref{clll6}, and \eqref{clll7} that
	\begin{align}\label{clll9}
		\sup _{0 \leq t \leq T}\|\nabla \rho\|_{L^{q}} \leq C.
	\end{align}
	This combined with \eqref{clll4}, \eqref{clll6},  and \eqref{clll7} yields 
	\begin{align}\label{clll10}
		\int_{0}^{T}\|\nabla u\|_{L^{\infty}}\, d t \leq C.
	\end{align}
Similarly, one can show that
	\begin{align}\label{clll11}
		\sup _{0 \leq t \leq T}\|\nabla \rho\|_{L^{2}} \leq C.
	\end{align}
	
	Next, from \eqref{PNS}$_1$, \eqref{do1}, \eqref{czui1}, \eqref{clll9}, \eqref{clll11}, and Gagliardo--Nirenberg inequality, we have
	\begin{align}
	 \|\rho_{t} \|_{L^{2}}  \leq C\big( \|\nabla \rho\|_{L^{3}}\|u\|_{L^{6}}+ \|\rho\|_{L^{\infty}}\|\nabla u\|_{L^{2}}\big)  \leq C\|\nabla u\|_{L^{2}},\label{do}
		\end{align}
		\begin{equation*}
		  \|  \nabla^{2} u \|_{L^{2}} +\|\nabla^{2} u \|_{L^{q}}  \leq C\left( \| \sqrt{\rho} \dot{u}\|_{L^{2}}+\| \sqrt{\rho} \dot{u} \|_{L^{q}}\right),
		\end{equation*}
		  \begin{align*}
		  \|\nabla u_{t} \|_{L^{2}}  &\leq C\left( \|\nabla \dot{u}\|_{L^{2}}+ \|u\|_{L^{\infty}} \|\nabla^{2} u \|_{L^{2}}+ \|\nabla u\|_{L^{4}}^{2}\right)  \\
		    &  \leq C\left( \|\nabla \dot{u}\|_{L^{2}}+ \|\nabla u\|_{L^{2}}^{\frac{1}{2}}\left\| \sqrt{\rho} \dot{u} \right\|_{L^{2}}^{\frac{3}{2}}\right),
	    \end{align*}
and
	\begin{align*}
	\int_{0}^{T}	t \|\nabla u_{t} \|_{L^{2}}^{2}\, d t  \leq C\int_{0}^{T}	t\|\nabla \dot{u}\|_{L^{2}}^{2}\, d t+C\sup _{0 \leq t \leq T}\left( t \| \sqrt{\rho} \dot{u}  \|_{L^{2}}^{2}\right)  \int_{0}^{T}  \| \sqrt{\rho} \dot{u}  \|_{L^{2}}\, d t.
	\end{align*}
	Combining the above estimates with \eqref{czui1}, \eqref{cll0}, \eqref{clll6}, and \eqref{clll7}, we obtain \eqref{clll1} and \eqref{clll0}.
\end{proof}

\subsection{Proof of Theorem \ref{th1}}\label{sub3}

Using the estimates obtained in Subsections \ref{sub1} and \ref{sub2}, we now prove Theorem \ref{th1} for the case $\rho_{\infty}=0$.

\textbf{Global well-posedness.} By Proposition \ref{local}, there exists
$T_{*}>0$ such that the Cauchy problem \eqref{PNS}, \eqref{a2}, and \eqref{a3}
admits a unique strong solution $(\rho,u)$ on
$\mathbb{R}^{3}\times(0,T_{*}]$. We shall extend this local strong solution
globally in time.

First, it follows from \eqref{xz}, \eqref{csca1}, and the definition of
$c_{0}$ in \eqref{c0} that the bootstrap assumptions \eqref{cp1} hold at the
initial time. Hence, by the time continuity of the local strong solution, there
exists $T_{1}\in(0,T_{*}]$ such that \eqref{cp1} holds on $[0,T_{1}]$.

Next, we define
\begin{align*}
	T^{*}\triangleq \sup \left\{ T>0 \,\middle|\,
	(\rho, u) \text{ is a strong solution of } \eqref{PNS},\eqref{a2},\eqref{a3}
	\text{ on } \mathbb{R}^{3}\times[0,T] \text{ satisfying } \eqref{cp1}
	\right\}.
\end{align*}
Then $T^{*}\geq T_{1}>0$. For any $0<\tau<T\leq T^{*}$ with $T<\infty$, it
follows from \eqref{czui1} and \eqref{clll1} that
\begin{align}\label{ccon1}
	u \in C\left([\tau,T];D^{1}\right),
\end{align}
 where we have used the standard embedding
\begin{align*}
	L^{\infty}\left(\tau, T ; H^{1}\right) \cap H^{1}\left(\tau, T ; H^{-1}\right) \hookrightarrow C\left([\tau, T] ; L^{s}\right), \quad \text { for any } s \in[2,6).
\end{align*}

Moreover, by \eqref{czui1} and \eqref{cuu0}, we have
\begin{align*}
	\rho u_{t}
	=\sqrt{\rho}\,\sqrt{\rho}u_{t}
	\in L^{2}\left(0,T;L^{2}\right).
\end{align*}
From \eqref{czui1}, \eqref{clll1},  \eqref{clll0}, and Sobolev's inequality, we
also obtain
\begin{align*}
	\rho_{t}u\in L^{2}\left(0,T;L^{2}\right).
\end{align*}
Therefore,
\begin{align}\label{pu1}
	(\rho u)_{t}
	=\rho u_{t}+\rho_{t}u
	\in L^{2}\left(0,T;L^{2}\right).
\end{align}
Furthermore, it follows from \eqref{czui1} that
\begin{align*}
	 \rho u =\sqrt{\rho}\,\sqrt{\rho}u \in L^{\infty}\left(0,T;L^{2}\right).
	 \end{align*}
	 This together with \eqref{pu1} yields
	 \begin{align}\label{ccon2}
	 	\rho u\in C\left([0,T];L^{2}\right).
	 	\end{align}

Next,  it follows from \eqref{clll10} and the classical result in
\cite[Chapter 3, Lemma 1.4]{Tem24}, we have
\begin{align*}
	\rho
	\in C\left([0,T];L^{2}\cap L^{q}\right)
	\cap C\left([0,T];H^{1}\cap W^{1,q}\text{-weak}\right).
\end{align*}
It remains to prove the strong continuity of $\rho$ in $H^{1}\cap W^{1,q}$.
From \eqref{PNS}$_1$, we know that, for $r=2$ or $r=q$, $|\nabla\rho|^r$ satisfies
\begin{align*}
\frac{d}{dt}\|\nabla\rho\|_{L^r}^r
\leq
C\|\nabla u\|_{L^\infty}\|\nabla\rho\|_{L^r}^r
+
C\|\nabla^2u\|_{L^r}
\|\nabla\rho\|_{L^r}^{r-1}.
\end{align*}
Therefore, we obtain
\begin{align*}
\frac{d}{dt}\|\nabla\rho \|_{L^r}
\leq
C\|\nabla u \|_{L^\infty}\|\nabla\rho \|_{L^r}
+
C\|\nabla^2u \|_{L^r}.
\end{align*}
By Gronwall's inequality, we have
\begin{align*}
	\|\nabla\rho(t)\|_{L^{r}}
	&\leq
	\left(
	\|\nabla\rho_{0}\|_{L^{r}}
	+C\int_{0}^{t}\|\nabla^{2}u(s)\|_{L^{r}}\,ds
	\right)
	\exp\left(
	C\int_{0}^{t}\|\nabla u(s)\|_{L^{\infty}}\,ds
	\right).
\end{align*}
In particular,
\begin{align*}
	\limsup_{t\rightarrow 0+}\|\nabla\rho(t)\|_{L^{r}}
	\leq \|\nabla\rho_{0}\|_{L^{r}}.
\end{align*}
Together with the weak continuity stated above, this implies that $\rho$ is
right-continuous in $W^{1,r}$ at $t=0$. Since \eqref{PNS}$_1$ is invariant under
time translations, the same argument applies at any time $t\in[0,T]$. Hence
\begin{align}\label{on2}
	\rho\in C\left([0,T];H^{1}\cap W^{1,q}\right).
\end{align}

Finally, suppose by contradiction that $T^{*}<\infty$. It follows from
\eqref{ccon1}, \eqref{ccon2},  and \eqref{on2} that the limit
\begin{align*}
	(\rho,u)(T^{*})
	=\lim_{t\rightarrow T^{*}}(\rho,u)(t)
\end{align*}
exists in the corresponding strong solution spaces and satisfies the local well-posedness initial
condition \eqref{sou} at $t=T^{*}$. Taking $(\rho,u)(T^{*})$ as new initial data,
Proposition \ref{local} allows us to extend the strong solution beyond $T^{*}$. This
contradicts the definition of $T^{*}$. Therefore, the strong
solution exists globally in time.

\textbf{Exponential stability.} Combining  Lemma \ref{lem1} with \eqref{cuu8}, we get
\begin{align}\label{cuu90}
	&\frac{d}{d t} \Big( \|\sqrt{\rho} u\|_{L^{2}}^{2}+\mu\|\nabla u \|_{L^{2}}^{2}+(\mu+\lambda)\|\operatorname{div} u \|_{L^{2}}^{2}\Big)  +\|\sqrt{\rho}\dot{u}\|_{L^{2}}^{2} + \|\nabla u \|_{L^{2}}^{2}\notag\\[2mm]
	&\leq   C   \|\nabla u\|_{L^{2}}^{2} \left( \|\nabla u\|_{L^{2}}^{2} + \|\sqrt{\rho} u\|_{L^{2}}^{2} \right).
\end{align}

Next, multiplying \eqref{PNS}$_1$ by
$\frac{3}{2}\rho^{\frac{1}{2}}$ and integrating the resultant over $\mathbb{R}^{3}$, we obtain
\begin{align*}
\frac{d}{dt}\|\rho \|_{L^{\frac{3}{2}}}^{\frac{3}{2}}
\leq C\|\operatorname{div}u \|_{L^\infty}
\|\rho \|_{L^{\frac{3}{2}}}^{\frac{3}{2}},
\end{align*}
which together with Gronwall's inequality leads to
\begin{align*}
\|\rho(t)\|_{L^{\frac{3}{2}}}
\leq C\|\rho_0\|_{L^{\frac{3}{2}}}
\exp\left(
C\int_0^t\|\operatorname{div}u(s)\|_{L^\infty}\,ds
\right).
\end{align*}
This along with \eqref{clll10} gives that
\begin{align}\label{ccon3}
	\rho\in L^{\infty}\left(0,T;L^{\frac{3}{2}}\right).
\end{align}

By \eqref{clll10}  and \eqref{ccon3}, we have
\begin{align}\label{mm1}
	\|\sqrt{\rho} u\|_{L^{2}}^{2}+\mu\|\nabla u \|_{L^{2}}^{2}+(\mu+\lambda)\|\operatorname{div} u \|_{L^{2}}^{2}\leq C_{1}    \|\nabla u\|_{L^{2}}^{2}.
\end{align}
Therefore, by \eqref{czui1}, one has that
\begin{align*}
	\int_{0}^{T}  \Big( \|\sqrt{\rho} u\|_{L^{2}}^{2}+\mu\|\nabla u \|_{L^{2}}^{2}+(\mu+\lambda)\|\operatorname{div} u \|_{L^{2}}^{2}\Big)   \, dt  \leq C.
\end{align*}
Choose $\alpha>0$ such that
\begin{align*}
	\alpha  \leq  \frac{1}{2 C_{1}}.
\end{align*}
Multiplying \eqref{cuu90} by  $e^{\alpha t}$ yields
\begin{align*}
	&\frac{d}{d t} \Big[ e^{\alpha  t}\Big(  \|\sqrt{\rho} u\|_{L^{2}}^{2}+\mu\|\nabla u \|_{L^{2}}^{2}+(\mu+\lambda)\|\operatorname{div} u \|_{L^{2}}^{2}\Big)\Big]   +e^{\alpha t}\left( \|\sqrt{\rho}\dot{u}\|_{L^{2}}^{2} + \|\nabla u \|_{L^{2}}^{2}\right)  \\[2mm]
	&\leq \alpha e^{\alpha   t}\Big(  \|\sqrt{\rho} u\|_{L^{2}}^{2}+\mu\|\nabla u \|_{L^{2}}^{2}+(\mu+\lambda)\|\operatorname{div} u \|_{L^{2}}^{2}\Big)+  C   e^{\alpha   t}\|\nabla u\|_{L^{2}}^{2} \left( \|\nabla u\|_{L^{2}}^{2} + \|\sqrt{\rho} u\|_{L^{2}}^{2} \right)\\[2mm]
	&\leq \frac{1}{2}e^{\alpha   t} \|\nabla u \|_{L^{2}}^{2} +  C   e^{\alpha   t}\|\nabla u\|_{L^{2}}^{2} \left( \|\nabla u\|_{L^{2}}^{2} + \|\sqrt{\rho} u\|_{L^{2}}^{2} \right).
\end{align*}	
Thus, by Gronwall's inequality and \eqref{czui1}, we obtain
\begin{align}\label{ce1}
	\sup _{0 \leq t \leq T}\Big[e^{\alpha t}\big(\|\sqrt{\rho} u(t)\|_{L^{2}}^{2} +\|\nabla u(t)\|_{L^{2}}^{2}\big)\Big] +\int_{0}^{T} e^{\alpha t}\big( \|\sqrt{\rho}\dot{u}\|_{L^{2}}^{2}+\|\nabla u\|_{L^{2}}^{2} \big)\, d t \leq C.
\end{align}

It follows from \eqref{cll0} that
\begin{align*}
	\limsup _{t \rightarrow 1} \|\sqrt{\rho} \dot{u}\|_{L^{2}}^{2} \leq C.
\end{align*}	
For  $t \geq 1$, multiplying \eqref{cll6} by  $e^{\alpha t}$, we get
\begin{align*}
	\frac{d}{d t}\left( e^{\alpha t}\|\sqrt{\rho} \dot{u}\|_{L^{2}}^{2}\right) +   e^{\alpha t}\|\nabla \dot{u}\|_{L^{2}}^{2}  &\leq \alpha e^{\alpha t}\|\sqrt{\rho} \dot{u}\|_{L^{2}}^{2}+Ce^{\alpha t}\|\sqrt{\rho}\dot{u}\|_{L^{2}}^{2}\left( \|\nabla u \|_{L^{2}}^{2} + \|\sqrt{\rho}\dot{u}\|_{L^{2}}^{2}\right).
\end{align*}
Then, it follows from Gronwall's inequality, \eqref{czui1}, and \eqref{ce1} that, for $t\geq 1$,
\begin{align}\label{ce2}
	\sup _{1 \leq t \leq T}\left( e^{\alpha t}\| \sqrt{\rho} \dot{u}(t)  \|_{L^{2}}^{2}\right)  +\int_{1}^{T} e^{\alpha t}\|  \nabla \dot{u}  \|_{L^{2}}^{2}\, d t \leq C.
\end{align}
Finally, the exponential stability stated in \eqref{cexp} follows directly
from \eqref{do1}, \eqref{do}, \eqref{ce1},  and \eqref{ce2}.

\textbf{Proof of Theorem \ref{th1} for $\rho_{\infty}>0$.} We now consider the case $\rho_{\infty}>0$. Since
\begin{align*}
\rho_{\infty}u_{0}=(\rho_{\infty}-\rho_{0})u_{0}+\rho_{0}u_{0},
\end{align*}
we have
\begin{align*}
	\rho_{\infty}^{2}\|u_{0}\|_{L^{2}}^{2}
	&\leq
	C\|(\rho_{\infty}-\rho_{0})u_{0}\|_{L^{2}}^{2}
	+C\|\rho_{0}u_{0}\|_{L^{2}}^{2} \\
	&\leq
	C\|\rho_{\infty}-\rho_{0}\|_{L^{3}}^{2}
	\|u_{0}\|_{L^{6}}^{2}
	+C\|\rho_{0}u_{0}\|_{L^{2}}^{2} \\
	&\leq
	C\|\rho_{\infty}-\rho_{0}\|_{L^{3}}^{2}
	\|\nabla u_{0}\|_{L^{2}}^{2}
	+C\|\rho_{0}u_{0}\|_{L^{2}}^{2}.
\end{align*}
Moreover, since
$\rho_{0}-\rho_{\infty}\in H^{1}\cap W^{1,q}$ with $q>3$, we have
$\rho_{0}\in L^{\infty}$ and $u_{0}\in L^{2}$. Hence
\begin{align*}
\|\sqrt{\rho_{0}}u_{0}\|_{L^{2}}^{2}
\leq
\|\rho_{0}\|_{L^{\infty}}\|u_{0}\|_{L^{2}}^{2}
\leq C.
\end{align*}
Meanwhile, the estimates obtained in Subsections \ref{sub1} and \ref{sub2}
remain valid in the present case.

Set
\begin{align*}
a\triangleq\rho-\rho_{\infty}.
\end{align*}
Then $a$ satisfies
\begin{align*}
a_t+u\cdot\nabla a+(a+\rho_{\infty})\operatorname{div}u=0.
\end{align*}
We now derive the general $L^p$-estimate of $a$ for $2\leq p\leq q$.
Since
\begin{align*}
a_t+u\cdot\nabla a+a\operatorname{div}u
=-\rho_\infty\operatorname{div}u.
\end{align*}
Multiplying the above equation by $|a|^{p-2}a$ and integrating over
$\mathbb R^3$, we obtain
\begin{align*}
	\frac{d}{dt}\|a\|_{L^p}
	\leq
	C \|\nabla u\|_{L^\infty}\|a\|_{L^p}
	+
	C  \|\nabla u\|_{L^p}.
\end{align*}
By Gronwall's inequality, one has that
\begin{align*}
	\sup_{0\leq t\leq T}\|a(t)\|_{L^{p}}
	&\leq
	\left(
	\|a_{0}\|_{L^{p}}
	+C T^{\frac{1}{2}}
	\left(\int_{0}^{T}\|\nabla u(s)\|_{L^{2}}^{2}\,ds\right)^{\frac{1}{2}}
	+C
	\int_{0}^{T}\|\nabla^{2}u(s)\|_{L^{2}}\,ds
	\right)\\
	&\quad\times
	\exp\left(
	 \int_{0}^{T}\|\nabla u(s)\|_{L^{\infty}}\,ds
	\right)
	\leq C(T).
\end{align*}

Then, by the standard transport equation argument and the estimates
\eqref{czui1}, \eqref{clll1}, and \eqref{clll0}, we have
\begin{align*}
a\in C\left([0,T];L^{2}\cap L^{q}\right)
\cap C\left([0,T];H^{1}\cap W^{1,q}\text{-weak}\right).
\end{align*}
The strong continuity of $a$ in $H^{1}\cap W^{1,q}$ can be proved in the same
way as in the case $\rho_{\infty}=0$. We therefore omit the details and conclude
that
\begin{align}\label{aaa}
	a=\rho-\rho_\infty
	\in C\left([0,T];H^{1}\cap W^{1,q}\right).
\end{align}

We now finish the continuation argument in the case $\rho_{\infty}>0$. Suppose,
by contradiction, that the maximal time $T^{*}$ is finite. From  \eqref{ccon1} and \eqref{aaa}, we have
$(\rho,u)(T^{*})$ satisfies the assumptions \eqref{sou} of the local well-posedness
result. Taking $(\rho,u)(T^{*})$ as new initial data, Proposition \ref{local} gives a
local strong solution beyond $T^{*}$. This contradicts the definition of $T^{*}$. Therefore, the strong solution exists globally in time in the case $\rho_{\infty}>0$.

\section{Global well-posedness for pressure system}\label{sec4}
In this section, we establish the global existence and uniqueness of strong
solutions to system \eqref{NS}. The argument is based on the local
well-posedness result stated in Proposition \ref{local}. In what follows, $(\rho,u )$ denotes a local strong solution to \eqref{NS}, \eqref{ca2}, and \eqref{ca3} in $\mathbb{R}^{3}\times(0,T]$ for some $T>0$.  First, we derive some a {\it priori} estimates for  $(\rho,u )$ and write $C(T)$  to emphasize that $C$ depends on $T$.
\begin{proposition}\label{wen}
Under the assumption \eqref{sca1}, there exists a positive constant $C$,
depending on the initial data but independent of $T$, such that
	\begin{gather}
		\sup_{0 \leq t \leq T}\Big(\| \sqrt{\rho} u(t)\|_{L^{2}}^{2}+\| \nabla u (t)\|_{L^{2}}^{2}\Big)+\int_{0}^{T}\left(\|\nabla u\|_{L^{2}}^{2} +\| \sqrt{\rho}\dot{u}\|_{L^{2}}^{2} \right) \, d t \leq C,\label{wen1}\\
		\sup_{0 \leq t \leq T}\Big(\|P(t) \|_{L^{1}} +\|P(t) \|_{L^{2}}^{2}+\|\rho(t) \|_{L^{\infty}}\Big)+\int_{0}^{T}\left( \|P \|_{L^{2}}^{2}+ \|P \|_{L^{3}}^{3}\right) \, d t \leq C.\label{wen2}
	\end{gather}
\end{proposition}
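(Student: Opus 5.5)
Proposition \ref{wen} will be proved by importing the scaling-invariant bootstrap of Wen \cite{W25}, adapted to the low-regularity data \eqref{ini}, and then adding the pressure integrability bounds. Set $E_0=\frac12\|\sqrt{\rho_0}u_0\|_{L^2}^2+\frac1{\gamma-1}\|P_0\|_{L^1}$. The basic energy identity
\begin{align*}
\frac{d}{dt}\Big(\frac12\|\sqrt\rho u\|_{L^2}^2+\frac1{\gamma-1}\|P\|_{L^1}\Big)+\mu\|\nabla u\|_{L^2}^2+(\mu+\lambda)\|\operatorname{div}u\|_{L^2}^2=0
\end{align*}
gives $\sup_t(\|\sqrt\rho u\|_{L^2}^2+\|P\|_{L^1})+\int_0^T\|\nabla u\|_{L^2}^2\,dt\le CE_0$. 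As in Proposition \ref{pro}, I will assume a priori that $\rho\le 2\bar\rho$ on $[0,T]$, together with a smallness hypothesis on the invariant product $\bar\rho^3E(t)(\|\nabla u\|_{L^2}^2+\|P\|_{L^2}^2)$ and its time-integrated analogue. I will then show that these bounds improve, so a continuity argument removes the a priori assumption.

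\textbf{Step 1: the $\nabla u$ estimate.} Multiply \eqref{NS}$_2$ by $\dot u$. Write $\nabla P\cdot\dot u$ via $P_t+u\cdot\nabla P+\gamma P\operatorname{div}u=0$ as $-\frac{d}{dt}\int P\operatorname{div}u$ plus terms like $\int P|\nabla u|^2$. The cubic convective terms are then bounded through \eqref{uu2} and \eqref{uu3}: $\int\rho|u|^2|\nabla u|^2\le C\bar\rho^{3}(\|\nabla u\|_{L^2}^2+\|P\|_{L^2}^2)^3+\frac14\|\sqrt\rho\dot u\|_{L^2}^2$, exactly as in Lemma \ref{lem2}, with $\|P\|_{L^2}$ as an extra source. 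This yields
\begin{align*}
\sup_t\|\nabla u\|_{L^2}^2+\int_0^T\|\sqrt\rho\dot u\|_{L^2}^2dt\le C(\|\nabla u_0\|_{L^2}^2+\|P_0\|_{L^2}^2)+C\bar\rho^3\!\int_0^T(\|\nabla u\|_{L^2}^2+\|P\|_{L^2}^2)^3dt+\text{pressure terms}.
\end{align*}

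\textbf{Step 2: the pressure estimates.} Using the identity $P=(-\Delta)^{-1}\operatorname{div}(\rho u)_t+(-\Delta)^{-1}\operatorname{div}\operatorname{div}(\rho u\otimes u)+(2\mu+\lambda)\operatorname{div}u$, I multiply by $P$ and integrate in space-time. The time-derivative term is moved onto $P_t$ and handled by the transport equation, giving $\int_0^T\|P\|_{L^2}^2$ controlled by $E_0$, $\bar\rho$, and the Step 1 quantities. Combined with the transport equation for $P$ and $F$, this gives the stated inequality $\frac{d}{dt}\|P\|_{L^p}^p+c\|P\|_{L^{p+1}}^{p+1}\le C\|F\|_{L^{p+1}}^{p+1}$ for $p=2$. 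The term $\|F\|_{L^3}^3$ is bounded by \eqref{uu2}, which yields $\sup\|P\|_{L^2}^2+\int\|P\|_{L^3}^3$. The bound on $\|P\|_{L^1}$ comes from energy.

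\textbf{Step 3: density upper bound.} I follow Lemma \ref{rho}, writing $(2\mu+\lambda)\operatorname{div}u=P-\frac{d}{dt}(-\Delta)^{-1}\operatorname{div}(\rho u)+[u_i,R_iR_j](\rho u_j)$ along trajectories. The $P$ term has a favorable sign, since it only decreases $\ln\rho$. The remaining two terms are bounded as in \eqref{inf} and \eqref{cd} by invariant products $\bar\rho^{3/4}E^{1/4}(\cdots)^{1/4}$ and $\bar\rho^{3/2}E^{1/2}(\cdots)^{1/2}$, where the factor $1+\bar\rho^{3+\gamma}E^2$ enters through $\|P\|$ contributions in $\|\nabla u\|_{L^6}$. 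Smallness \eqref{sca1} then gives $\rho\le\frac32\bar\rho$.

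\textbf{Step 4: closing.} Multiplying the energy bound by $\bar\rho^3$ and the Step 1–2 bounds closes the invariant product, exactly as in Lemma \ref{sca}. The main obstacle is Step 2: getting $\int\|P\|_{L^2}^2$ and the $P$-terms in Step 1 bounded by exactly the scaling-invariant combination in \eqref{sca1}. The bracket $[1+\bar\rho^{3+\gamma}E_0^2]$ suggests the bound $\|P\|_{L^\infty}\le(2\bar\rho)^\gamma$ must be interpolated against $\|P\|_{L^1}$. I would first try to track exponents by checking each bound against the scaling \eqref{pressure-scaling}, and lean on Wen's computations \cite{W25}, which need only $\nabla u_0\in L^2$ at this level.
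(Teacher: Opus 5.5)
Your overall plan matches the paper's. The paper proves only the bound on $\int_0^T\|P\|_{L^2}^2\,dt$, and it does so exactly as in the first half of your Step~2: test $P=(-\Delta)^{-1}\operatorname{div}(\rho u)_t+(-\Delta)^{-1}\operatorname{div}\operatorname{div}(\rho u\otimes u)+(2\mu+\lambda)\operatorname{div}u$ against $P$, move $\partial_t$ onto $P$, and use \eqref{P}. Everything else in \eqref{wen1}--\eqref{wen2}, including the bound on $\|\rho\|_{L^\infty}$, is quoted from \cite{W25}. The gap is in the part you reconstruct yourself: Step~3 as written does not give a $T$-independent density bound. For \eqref{NS} one only has $\|\nabla u\|_{L^6}\le C(\|\rho\dot u\|_{L^2}+\|P\|_{L^6})$, so running \eqref{cd} produces the extra term $\bar\rho\int_0^T\|\nabla u\|_{L^2}\|P\|_{L^6}\,dt$. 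The available bounds ($\nabla u\in L^2_tL^2_x$ and $P\in L^\infty_t(L^1\cap L^\infty)\cap L^2_tL^2_x\cap L^3_tL^3_x$) do not put $\|P\|_{L^6}$ in $L^2(0,\infty)$. Interpolating against $L^\infty$ gives only $\|P\|_{L^6}^2\le C\|P\|_{L^p}^{p/3}$, whereas only $\|P\|_{L^p}^{p}$, $p=2,3$, is integrable in time. So this integral is controlled only by a quantity growing with $T$ (e.g.\ $CT^{1/3}$). Scaling invariance of the integrand does not give time integrability, and discarding $-P/(2\mu+\lambda)$ just because of its sign throws away exactly what is needed.

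The missing idea is to use the pressure damping quantitatively, e.g.\ via Zlotnik's lemma for $y=\ln\rho$ along particle paths, $y'=g(y)+b'(t)$ with $g(y)=-e^{\gamma y}/(2\mu+\lambda)$. Split $C\bar\rho\|\nabla u\|_{L^2}\|P\|_{L^6}\le N_1+CN_1^{-1}\bar\rho^{2}\|\nabla u\|_{L^2}^2\|P\|_{L^6}^2$ with $N_1=\bar\rho^{\gamma}/(2\mu+\lambda)$, so the linear growth $N_1(t_2-t_1)$ is absorbed once $\rho\ge\bar\rho$. By $\|P\|_{L^6}^2\le\|P\|_{L^2}^{2/3}\|P\|_{L^\infty}^{4/3}$, the remainder is at most $C\bar\rho^{2+\gamma/3}E_0\sup_t\|P\|_{L^2}^{2/3}$. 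This is scaling invariant. Once $\sup_t\|P\|_{L^2}^2\le C(\|\nabla u_0\|_{L^2}^2+\|P_0\|_{L^2}^2)$, it is, up to a constant, the cube root of $\bar\rho^{3}E_0(\|\nabla u_0\|_{L^2}^2+\|P_0\|_{L^2}^2)\cdot\bar\rho^{3+\gamma}E_0^2$. That product is small under \eqref{sca1} even though $\bar\rho^{3+\gamma}E_0^2$ alone need not be, which is a natural source of the bracket in \eqref{sca1}.

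The same care is needed in Step~1. Bounding $\int P|\nabla u|^2\,dx$ by $\|P\|_{L^\infty}\|\nabla u\|_{L^2}^2$ would leave $\bar\rho^{3+\gamma}E_0^2$ by itself in the bootstrap. Instead go through $\|P\|_{L^3}\|\nabla u\|_{L^3}^2$ and \eqref{x2} with $p=2$, or use the multiplier $u_t$ as in \eqref{c2}. Alternatively, simply quote \cite{W25} for these estimates, as the paper does.
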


\begin{proof}
Here we only prove the estimate for $\int_{0}^{T}\|P\|_{L^{2}}^{2}\,dt$. The other
estimates follow from \cite[Proposition 2.2, Lemma 2.1, and Corollary 2.7]{W25}.

By \eqref{NS}$_1$ and the pressure law $P=\rho^{\gamma}$,  $P$
satisfies
\begin{align}\label{P}
P_{t}+\operatorname{div}(P u)+(\gamma-1)P\operatorname{div}u=0.
\end{align}
On the other hand, \eqref{NS}$_2$ gives
\begin{align*}
P=(-\Delta)^{-1} \operatorname{div}(\rho u)_{t}+(-\Delta)^{-1} \operatorname{div} \operatorname{div}(\rho u \otimes u)+(2 \mu+\lambda) \operatorname{div} u.
\end{align*}
Combining this identity with \eqref{P}, we obtain
\begin{align}\label{PP}
	\|P\|_{L^{2}}^{2}= & \frac{d}{d t} \int(-\Delta)^{-1} \operatorname{div}(\rho u) P \, d x+(\gamma-1) \int(-\Delta)^{-1} \operatorname{div}(\rho u) P \operatorname{div} u \, d x \notag\\
	& -\int P u \cdot \nabla(-\Delta)^{-1} \operatorname{div}(\rho u) \, d x+\int(-\Delta)^{-1} \operatorname{div} \operatorname{div}(\rho u \otimes u) Pd x +(2 \mu+\lambda) \int P \operatorname{div} u d x \notag\\
	\leq & \frac{d}{d t} \int(-\Delta)^{-1} \operatorname{div}(\rho u) P \, d x+C\left\|(-\Delta)^{-1} \operatorname{div}(\rho u)\right\|_{L^{\infty}}\|P\|_{L^{2}}\|\nabla u\|_{L^{2}} \notag\\
	& +C\|P\|_{L^{\frac{3}{2}}}\|u\|_{L^{6}}^{2}+C\|P\|_{L^{2}}\|\nabla u\|_{L^{2}} \notag\\
	\leq & \frac{d}{d t} \int(-\Delta)^{-1} \operatorname{div}(\rho u) P \, d x+\frac{1}{2}\|P\|_{L^{2}}^{2}+C\|\nabla u\|_{L^{2}}^{2},
\end{align}
where we have used  \eqref{inf}, \eqref{wen1}, \eqref{wen2}, and
\begin{align*}
	\sup_{0 \leq t \leq T} \|(-\Delta)^{-1} \operatorname{div}(\rho u) (t)\|_{L^{\infty}} \leq C \sup_{0 \leq t \leq T}\Big( \|\sqrt{\rho} u (t)\|_{L^{2}}+\|\nabla u(t)\|_{L^{2}}\Big) \leq C.
\end{align*}
Integrating \eqref{PP} over $(0,T)$ yields
\begin{align*}
	\int_{0}^{T}\|P\|_{L^{2}}^{2}\, d t \leq  C\sup_{0 \leq t \leq T}\big(  \|(-\Delta)^{-1} \operatorname{div}(\rho u)(t) \|_{L^{\infty}}\|P(t)\|_{L^{1}}\big) +C\int_{0}^{T}\|\nabla u\|_{L^{2}}^{2}\, d t\leq C.
\end{align*}
This completes the proof.
\end{proof}

\subsection{A  \textit{ priori}  estimates}
Throughout this subsection, we derive several higher-order estimates which are
needed to establish the time continuity of the solution. We first prove the
following time-weighted estimate.
\begin{lemma}\label{lem51}
	Under the condition \eqref{sca1}, it holds that
	\begin{align}\label{uu0}
		\sup _{0 \leq t \leq T}\Big( t  \|\nabla u(t)\|_{L^{2}}^{2} +t\|P(t)\|_{L^{2}}^{2}\Big)  +\int_{0}^{T} t \left(  \|\sqrt{\rho} \dot{u} \|_{L^{2}}^{2}+\|P\|_{L^{3}}^{3}\right)   \, d t \leq C.
	\end{align}
\end{lemma}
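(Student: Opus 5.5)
We derive two energy-type differential inequalities, one for $\nabla u$ and one for $P$, multiply each by $t$, and close them using the time-integrated bounds of Proposition \ref{wen}. Throughout we use \eqref{wen1}, \eqref{wen2}, and Lemma \ref{F}.

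\emph{Step 1: the $\nabla u$ inequality.} Test \eqref{NS}$_2$ with $\dot u$. Writing $\nabla P\cdot\dot u$ through $F$ and using the transport equation \eqref{P} in the usual way (as in \cite{HLX12}) gives
\begin{align*}
\frac{d}{dt}\Big(\frac{\mu}{2}\|\nabla u\|_{L^2}^2+\frac{\mu+\lambda}{2}\|\operatorname{div}u\|_{L^2}^2-\int P\operatorname{div}u\,dx\Big)+\|\sqrt\rho\dot u\|_{L^2}^2
\le C\|\nabla u\|_{L^3}^3+C\int P|\nabla u|^2\,dx+C\|P\|_{L^2}^2+C\|\nabla u\|_{L^2}^2 .
\end{align*}
The cubic terms are bounded via \eqref{uu3}--\eqref{uu2} with $p=3$:
\[
\|\nabla u\|_{L^3}^3\le C\|\rho\dot u\|_{L^2}^{3/2}\big(\|\nabla u\|_{L^2}+\|P\|_{L^2}\big)^{3/2}+C\|P\|_{L^3}^3 .
\]
Young's inequality then gives a bound $\tfrac12\|\sqrt\rho\dot u\|_{L^2}^2+C(\|\nabla u\|_{L^2}^2+\|P\|_{L^2}^2)^3+C\|P\|_{L^3}^3$. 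By \eqref{wen1}--\eqref{wen2}, the first of these is $\le C(\|\nabla u\|_{L^2}^2+\|P\|_{L^2}^2)$, and $\int P|\nabla u|^2$ is handled the same way.

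\emph{Step 2: the $P$ inequality.} From \eqref{P} we get $\frac{d}{dt}\|P\|_{L^2}^2=-(2\gamma-1)\int P^2\operatorname{div}u$. Substituting $\operatorname{div}u=(F+P)/(2\mu+\lambda)$ gives
\[
\frac{d}{dt}\|P\|_{L^2}^2+\frac{2\gamma-1}{2\mu+\lambda}\|P\|_{L^3}^3\le C\int P^2|F|\le \tfrac{2\gamma-1}{2(2\mu+\lambda)}\|P\|_{L^3}^3+C\|F\|_{L^3}^3,
\]
where the last step is Young's inequality. By \eqref{uu2}, $\|F\|_{L^3}^3\le \eta\|\rho\dot u\|_{L^2}^2+C_\eta(\|\nabla u\|_{L^2}^2+\|P\|_{L^2}^2)^3$, and \eqref{wen} lets us treat the sixth power as a quadratic.

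\emph{Step 3: combining.} Add Step 1 to $K$ times Step 2, with $K$ large enough that the $\|P\|_{L^3}^3$ produced in Step 1 is absorbed by the dissipation from Step 2. Then choose $\eta$ small relative to $K$. Denote the resulting energy by
\[
E(t)=\frac{\mu}{2}\|\nabla u\|_{L^2}^2+\frac{\mu+\lambda}{2}\|\operatorname{div}u\|_{L^2}^2-\int P\operatorname{div}u\,dx+K\|P\|_{L^2}^2 .
\]
Since $\big|\int P\operatorname{div}u\big|\le\tfrac{\mu}{4}\|\nabla u\|_{L^2}^2+C\|P\|_{L^2}^2$ and $K$ is large, $E$ is equivalent to $\|\nabla u\|_{L^2}^2+\|P\|_{L^2}^2$. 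We obtain
\[
E'+c\big(\|\sqrt\rho\dot u\|_{L^2}^2+\|P\|_{L^3}^3\big)\le C\big(\|\nabla u\|_{L^2}^2+\|P\|_{L^2}^2\big).
\]
Multiplying by $t$ yields $(tE)'+ct(\cdots)\le E+Ct(\|\nabla u\|_{L^2}^2+\|P\|_{L^2}^2)$. Integrating, the right-hand side is bounded by $\int_0^T(\|\nabla u\|_{L^2}^2+\|P\|_{L^2}^2)\,dt\le C$ from \eqref{wen}, provided the coefficient of $t(\cdots)$ is time-integrable.

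\emph{Main obstacle.} The difficulty is ensuring that the superlinear terms such as $(\|\nabla u\|_{L^2}^2+\|P\|_{L^2}^2)^3$ really reduce to terms integrable in time. My plan is to write them as
\[
(\|\nabla u\|_{L^2}^2+\|P\|_{L^2}^2)^2\cdot(\|\nabla u\|_{L^2}^2+\|P\|_{L^2}^2).
\]
The first factor is bounded by the sup bounds in \eqref{wen}, and the second factor lies in $L^1_t$ by \eqref{wen}. The $t$-weight then enters only through a Gronwall factor $\exp\!\big(C\int_0^T(\|\nabla u\|_{L^2}^2+\|P\|_{L^2}^2)\,dt\big)\le C$. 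This is how the bound stays independent of $T$.
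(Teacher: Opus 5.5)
Your route is essentially the paper's. You combine a first-order energy inequality for $\nabla u$ with a large multiple of the $L^2$ pressure balance \eqref{x2} ($p=2$) to produce $\|P\|_{L^3}^3$ dissipation, use \eqref{uu2} for $\|F\|_{L^3}^3$, then apply the weight $t$ and Gronwall with the integrable coefficient $\|\nabla u\|_{L^2}^2+\|P\|_{L^2}^2$, as in \eqref{o1}. The only difference is cosmetic. You test with $\dot u$ as in \cite{HLX12}, which gives $\|\nabla u\|_{L^3}^3$ and $\int P|\nabla u|^2$. The paper tests with $u_t$ and splits $\int P_t\operatorname{div}u$ through $F$, which puts $\frac{1}{2\mu+\lambda}\|P\|_{L^2}^2$ into the energy and produces $\int Pu\cdot\nabla F$ and $\int P\operatorname{div}u\,F$ instead.

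As written, however, the argument does not close with $C$ independent of $T$. In Step 1 you put the linear terms $C\|P\|_{L^2}^2+C\|\nabla u\|_{L^2}^2$ on the right. In Steps 1--2 you also replace $(\|\nabla u\|_{L^2}^2+\|P\|_{L^2}^2)^3$ and the sixth power by linear quantities, so you arrive at $E'+c(\cdots)\le C(\|\nabla u\|_{L^2}^2+\|P\|_{L^2}^2)$. After multiplying by $t$, the term $Ct(\|\nabla u\|_{L^2}^2+\|P\|_{L^2}^2)\le C\,tE$ has a constant Gronwall coefficient. This gives only $tE(t)\le e^{Ct}\int_0^t E\,ds$. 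Nothing on the left controls $\|\nabla u\|_{L^2}^2$ or $\|P\|_{L^2}^2$, since there is no Poincar\'e inequality on $\mathbb{R}^3$ with vacuum.

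Your remedy in the last paragraph is correct for the superlinear terms: bound one factor of the cubic by the sup, keep one in $L^1_t$, and bound one by $CE$. It cannot help with the linear terms you inserted in Step 1. Fortunately those terms do not arise. The exact identity is
\[
-\int\nabla P\cdot\dot u\,dx=\frac{d}{dt}\int P\operatorname{div}u\,dx+(\gamma-1)\int P(\operatorname{div}u)^2\,dx+\int P\,\partial_iu^j\partial_ju^i\,dx,
\]
and the convective parts of the viscous terms contribute only $C\|\nabla u\|_{L^3}^3$. The $C\|\nabla u\|_{L^2}^2$ in \cite{HLX12} comes from $\int P|\nabla u|^2\le\|P\|_{L^\infty}\|\nabla u\|_{L^2}^2$. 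That is harmless there only because the weight is $\min\{1,t\}$.

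To fix the argument:
\begin{itemize}
\item bound $\int P|\nabla u|^2\le\|P\|_{L^3}\|\nabla u\|_{L^3}^2$, as you already suggest;
\item drop the linear terms from Step 1;
\item never linearize the higher powers.
\end{itemize}
You then obtain $E'+c(\|\sqrt\rho\dot u\|_{L^2}^2+\|P\|_{L^3}^3)\le C(\|\nabla u\|_{L^2}^2+\|P\|_{L^2}^2)\,E$. The $t$-weighted Gronwall argument then yields \eqref{uu0} with a $T$-independent constant.
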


\begin{proof}
 Multiplying \eqref{NS}$_2$ by $u_t$, using \eqref{P}, and integrating by parts
 over $\mathbb{R}^{3}$, we obtain
	\begin{align}\label{c2}
		& \frac{1}{2} \frac{d}{d t}  \big(\mu\|\nabla u\|_{L^{2}}^{2}+(\mu+\lambda)\|\operatorname{div} u\|_{L^{2}}^{2}\big)+\|\sqrt{\rho}\dot{u}\|_{L^{2}}^{2} \notag\\[2mm]
		&=  \frac{d}{d t} \int P \operatorname{div} u \, dx-\int P_{t} \operatorname{div} u \, dx +\int \rho u \cdot \nabla u \cdot \dot{u} \, dx\notag\\
		&=  \frac{d}{d t} \int P \operatorname{div} u \, dx-\frac{1}{2(2 \mu+\lambda)} \frac{d}{d t} \int P^{2} \, dx-\frac{1}{2 \mu+\lambda} \int P u \cdot \nabla F \, dx\notag\\
		& \quad +\frac{\gamma-1}{2 \mu+\lambda} \int P \operatorname{div} u F \, dx+ \int \rho u \cdot \nabla u \cdot \dot{u} \, dx\notag\\
		&\triangleq \frac{d}{d t} \left( \int P \operatorname{div} u \, dx-\frac{1}{2(2 \mu+\lambda)}  \|P \|_{L^{2}}^{2}\right)  +\sum_{i=1}^{3} I_{i}.
	\end{align}
By H\"older's inequality, \eqref{wen2}, and Lemma \ref{F}, we have
	\begin{align*}
	I_{1}+I_{2} & \leq   C\|u\|_{L^{6}} \|P \|_{L^{3}}\|\nabla F\|_{L^{2}}+C \|P \|_{L^{3}}\|\operatorname{div} u\|_{L^{2}}\|F\|_{L^{6}} \\[2mm]
	& \leq C\|\nabla u\|_{L^{2}} \|P \|_{L^{3}}  \|\sqrt{\rho} \dot{u}\|_{L^{2}}\\
	& \leq \frac{1}{3}\|\sqrt{\rho}\dot{u}\|_{L^{2}}^{2}+C\|\nabla u\|_{L^{2}}^{2} \|P \|_{L^{3}}^{2},\\[2mm]
		I_{3} & \leq \frac{1}{12}\|\sqrt{\rho}\dot{u}\|_{L^{2}}^{2}+ C \int \rho|u|^{2}|\nabla u|^{2}\, dx\\
		& \leq \frac{1}{12}\|\sqrt{\rho}\dot{u}\|_{L^{2}}^{2}+C \|u\|_{L^{6}}^{2}\|\nabla u\|_{L^{3}}^{2} \\
		& \leq \frac{1}{12}\|\sqrt{\rho}\dot{u}\|_{L^{2}}^{2}+C  \|\nabla u\|_{L^{2}}^{2}\left(\|\operatorname{curl} u\|_{L^{3}}^{2}+\|F\|_{L^{3}}^{2}+ \|P \|_{L^{3}}^{2}\right) \\
		& \leq \frac{1}{12}\|\sqrt{\rho}\dot{u}\|_{L^{2}}^{2}+C  \|\nabla u\|_{L^{2}}^{2}\big(\|\operatorname{curl} u\|_{L^{2}}\|\nabla \operatorname{curl} u\|_{L^{2}}+\|F\|_{L^{2}}\|\nabla F\|_{L^{2}}+\left\|P\right\|_{L^{3}}^{2}\big)\\
		&\leq \frac{1}{12}\|\sqrt{\rho}\dot{u}\|_{L^{2}}^{2}+C  \|\nabla u\|_{L^{2}}^{2}\left(\|\nabla u\|_{L^{2}}+ \|P\|_{L^{2}}\right)\|\sqrt{\rho} \dot{u}\|_{L^{2}}+C  \|\nabla u\|_{L^{2}}^{2} \|P \|_{L^{3}}^{2}\\
		&\leq \frac{1}{6}\|\sqrt{\rho}\dot{u}\|_{L^{2}}^{2}+C  \|\nabla u\|_{L^{2}}^{4}\left(\|\nabla u\|_{L^{2}}^{2}+ \|P\|_{L^{2}}^{2}\right) +C  \|\nabla u\|_{L^{2}}^{2} \|P \|_{L^{3}}^{2}.
	\end{align*}
Substituting these estimates $I_{i}$ into \eqref{c2}, we get
	\begin{align}\label{oq}
		 \frac{d}{d t}A(t)   +\| \sqrt{\rho}\dot{u}\|_{L^{2}}^{2}
		&\leq C  \|\nabla u\|_{L^{2}}^{4}\big(\|\nabla u\|_{L^{2}}^{2}+\|P\|_{L^{2}}^{2}\big)+C  \|\nabla u\|_{L^{2}}^{2}\|P\|_{L^{3}}^{2}\notag\\
		&\leq C  \|\nabla u\|_{L^{2}}^{4}\big(\|\nabla u\|_{L^{2}}^{2}+\|P\|_{L^{2}}^{2}\big)+ C_{2}\|P \|_{L^{3}}^{3},
	\end{align}
	where
	\begin{align*}
	A(t) \triangleq   \mu\|\nabla u\|_{L^{2}}^{2}+(\mu+\lambda)\|\operatorname{div} u\|_{L^{2}}^{2}-2\int   P \operatorname{div} u \, dx+\frac{1}{ 2 \mu+\lambda }  \|P \|_{L^{2}}^{2}.
    \end{align*}

    It follows from \eqref{P} that
    \begin{align}\label{x1}
    	P_t+u\cdot \nabla P+\gamma P \operatorname{div} u=0.
    \end{align}
    For $p \geq 2$, multiplying \eqref{x1} by $p P^{p-1}$ and integrating the resulting equality over $\mathbb{R}^{3}$ gives
    \begin{align*}
    \frac{d}{d t}\|P\|_{L^{p}}^{p} +\frac{p \gamma-1}{2 \mu+\lambda}\|P\|_{L^{p+1}}^{p+1}=-\frac{p \gamma-1}{2 \mu+\lambda} \int P^{p} F \, d x.
    \end{align*}
    By H\"older's inequality, this implies that
    \begin{align}\label{x2}
    	\frac{d}{d t}\|P\|_{L^{p}}^{p}+\frac{p \gamma-1}{2(2 \mu+\lambda)}\|P\|_{L^{p+1}}^{p+1} \leq C \|F\|_{L^{p+1}}^{p+1}.
    \end{align}
	
	For $A(t)$, we have
	\begin{align*}
	\|\nabla u\|_{L^{2}}^{2}-C\|P\|_{L^{2}}^{2} \leq A(t)  \leq   C\left( \|\nabla u\|_{L^{2}}^{2}+ \|P\|_{L^{2}}^{2}\right).
	\end{align*}
	Choose
	\begin{align*}
	C_{3}\geq \frac{2(2\mu+\lambda)(C_{2}+1)}{2\gamma-1}
\end{align*}
	large enough so that
	\begin{align}\label{oo1}
	 \|\nabla u\|_{L^{2}}^{2}+\|P\|_{L^{2}}^{2} \leq A(t)+C_{3}\|P\|_{L^{2}}^{2} \leq   C\left( \|\nabla u\|_{L^{2}}^{2}+ \|P\|_{L^{2}}^{2}\right).
\end{align}
Taking $p=2$ in \eqref{x2}, adding $C_{3}$ times the resulting inequality to
\eqref{oq}, and using \eqref{uu2} and Proposition \ref{wen}, we derive
	\begin{align*}
		&\frac{d}{d t}  \left(A(t)+C_{3}\|P\|_{L^{2}}^{2}\right)+\| \sqrt{\rho}\dot{u}\|_{L^{2}}^{2} +\| P\|_{L^{3}}^{3}\\
		&\leq C  \|\nabla u\|_{L^{2}}^{2}\left(\|\nabla u\|_{L^{2}}^{2}+ \|P \|_{L^{2}}^{2}\right) + C\|F \|_{L^{3}}^{3}\\
		&\leq   \frac{1}{2}\| \sqrt{\rho}\dot{u}\|_{L^{2}}^{2}+\|\nabla u\|_{L^{2}}^{2}\left(\|\nabla u\|_{L^{2}}^{2}+ \|P \|_{L^{2}}^{2}\right)+C  \left( \|\nabla u\|_{L^{2}}+\|P \|_{L^{2}}\right) ^{6}\\
		&\leq \frac{1}{2}\| \sqrt{\rho}\dot{u}\|_{L^{2}}^{2}+C  \left(\|\nabla u\|_{L^{2}}^{2}+ \|P \|_{L^{2}}^{2}\right)\left(\|\nabla u\|_{L^{2}}^{2}+ \|P \|_{L^{2}}^{2}\right).
	\end{align*}
		Since the first term on the right-hand side can be absorbed by the left, we have
		\begin{align}\label{o1}
		 \frac{d}{d t}  \left(A(t)+C_{3}\|P\|_{L^{2}}^{2}\right)+\| \sqrt{\rho}\dot{u}\|_{L^{2}}^{2} +\| P\|_{L^{3}}^{3} \leq C  \left(\|\nabla u\|_{L^{2}}^{2}+ \|P \|_{L^{2}}^{2}\right)\left(\|\nabla u\|_{L^{2}}^{2}+ \|P \|_{L^{2}}^{2}\right).
		\end{align}
Multiplying \eqref{o1} by  $t$ and noting that
\begin{align*}
	\int_{0}^{T}  \left( A(t)+C_{3}\|P\|_{L^{2}}^{2}\right)   \, dt \leq C,
\end{align*}
we conclude from Proposition \ref{wen}, \eqref{o1}, and
Gronwall's inequality that \eqref{uu0} holds.
\end{proof}

\begin{lemma}\label{lem52}
	Under condition \eqref{sca1}, it holds that
	\begin{align}\label{ll0}
		\sup _{0 \leq t \leq T}\left(  t^{i} \| \sqrt{\rho}\dot{u}(t)\|_{L^{2}}^{2}+t^{i}\| P(t)\|_{L^{3}}^{3}\right) +\int_{0}^{T} t^{i}\left(\|\nabla \dot{u}\|_{L^{2}}^{2}+\|P\|_{L^{4}}^{4}\right) \, d t \leq C, \quad i=1,2.
	\end{align}
\end{lemma}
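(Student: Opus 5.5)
We follow the scheme of Lemma \ref{clem52}, now carrying along the pressure and using the effective viscous flux $F$ in place of $\operatorname{div}u$. We apply $\partial_t+\operatorname{div}(u\,\cdot)$ to the $j$-th component of \eqref{NS}$_2$, written as $\rho\dot u^j=\partial_jF+\mu(\Delta u^j-\partial_j\operatorname{div}u)$, and test with $\dot u^j$. The viscous terms are handled exactly as $K_1,K_2$ in \eqref{cll-k1}--\eqref{cll-k2}, up to the replacement of $\operatorname{div}u$ by $F$. The new pressure term is
\begin{align*}
-\int \dot u^j\big(\partial_j P_t+\operatorname{div}(u\,\partial_jP)\big)\,dx .
\end{align*}
Using \eqref{x1} in the form $P_t+\operatorname{div}(Pu)=-(\gamma-1)P\operatorname{div}u$ and integrating by parts, it is bounded by $C\int|\nabla\dot u|\,P\,|\nabla u|\,dx$, in the standard way of \cite{HLX12}. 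Young's inequality then gives
\begin{align*}
\frac{d}{dt}\|\sqrt\rho\dot u\|_{L^2}^2+\|\nabla\dot u\|_{L^2}^2\le C\|\nabla u\|_{L^4}^4+C\|P\|_{L^4}^4 .
\end{align*}

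Next we estimate $\|\nabla u\|_{L^4}^4$ using \eqref{uu3}, \eqref{uu2} with $p=4$, and the bounds \eqref{wen1}--\eqref{wen2}:
\begin{align*}
\|\nabla u\|_{L^4}^4\le C\|\sqrt\rho\dot u\|_{L^2}^{3}\big(\|\nabla u\|_{L^2}+\|P\|_{L^2}\big)+C\|P\|_{L^4}^4 .
\end{align*}
The first term is at most $C\|\sqrt\rho\dot u\|_{L^2}^2\big(\|\sqrt\rho\dot u\|_{L^2}^2+\|\nabla u\|_{L^2}^2+\|P\|_{L^2}^2\big)$, which has the Gronwall structure of \eqref{cll6}. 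The $\|P\|_{L^4}^4$ term cannot be absorbed directly, so we add a large multiple $C_4$ of \eqref{x2} with $p=3$. This produces the dissipation $\|P\|_{L^4}^4$ on the left, at the cost of $C\|F\|_{L^4}^4$ on the right. By \eqref{uu2} with $p=4$, $\|F\|_{L^4}^4$ is controlled by the same Gronwall-type product. This yields
\begin{align*}
\frac{d}{dt}\big(\|\sqrt\rho\dot u\|_{L^2}^2+C_4\|P\|_{L^3}^3\big)+\|\nabla\dot u\|_{L^2}^2+\|P\|_{L^4}^4\le C\|\sqrt\rho\dot u\|_{L^2}^2\big(\|\sqrt\rho\dot u\|_{L^2}^2+\|\nabla u\|_{L^2}^2+\|P\|_{L^2}^2\big).
\end{align*}

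The main obstacle is the $P$--$\nabla\dot u$ cross term. We would need $C_4$ large enough that the dissipation $\|P\|_{L^4}^4$ coming from \eqref{x2} dominates every $\|P\|_{L^4}^4$ produced when we bound $\|\nabla u\|_{L^4}^4$. At the same time, the $\|F\|_{L^4}^4$ generated by \eqref{x2} must not reintroduce $\|P\|_{L^4}$ with a large constant. This is why we use \eqref{uu2}, which involves only $\|P\|_{L^2}$, rather than \eqref{uu3}. If this balance fails, we would first try replacing $\|P\|_{L^4}^4$ by the interpolation $\|P\|_{L^3}^{3}\|P\|_{L^\infty}$, which is bounded via $\|\rho\|_{L^\infty}\le C$.

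Finally, we multiply by $t$ and integrate. The resulting terms $\int_0^T(\|\sqrt\rho\dot u\|_{L^2}^2+\|P\|_{L^3}^3)\,dt$ are bounded by \eqref{wen1}, \eqref{wen2}, and the coefficient in the Gronwall factor is integrable by \eqref{wen1}, \eqref{wen2} and \eqref{uu0}. Gronwall's inequality then gives \eqref{ll0} for $i=1$. Multiplying by $t^2$ instead, the extra terms $2t\big(\|\sqrt\rho\dot u\|_{L^2}^2+C_4\|P\|_{L^3}^3\big)$ are integrable by \eqref{uu0}, and Gronwall's inequality yields the case $i=2$.
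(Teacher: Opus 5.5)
Your proposal is correct and follows essentially the same route as the paper. You derive $\frac{d}{dt}\|\sqrt{\rho}\dot u\|_{L^2}^2+\|\nabla\dot u\|_{L^2}^2\le C(\|\nabla u\|_{L^4}^4+\|P\|_{L^4}^4)$ and split $\|\nabla u\|_{L^4}^4$ by \eqref{uu3}. You then absorb the resulting $\|P\|_{L^4}^4$ by adding a large multiple of \eqref{x2} with $p=3$, and its $\|F\|_{L^4}^4$ cost is controlled through \eqref{uu2} without circularity. Finally you close with the weights $t$, $t^2$ and Gronwall, using Proposition \ref{wen} and \eqref{uu0}.

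The only blemish is bookkeeping. If you write the momentum equation via $\partial_j F$, the pressure is already contained in $F_t$. So either use that form, or treat $-\int \dot u^j(\partial_j P_t+\operatorname{div}(u\,\partial_j P))\,dx$ as a separate term as the paper does, but not both.
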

\begin{proof}
 Applying the operator $\partial_{t}+\operatorname{div}(u\,\cdot)$ to the
 $j$-th component of \eqref{NS}$_2$ and multiplying the resulting equation by
 $\dot{u}^{j}$, we obtain that
	\begin{align}\label{ll5}
	\frac{1}{2} \frac{d}{d t} \| \sqrt{\rho}\dot{u}\|_{L^{2}}^{2}
		&=\mu \int \dot{u}^{j}\Big(\partial_{t} \Delta u^{j}+\operatorname{div}\big(u \Delta u^{j}\big)\Big) \, d x+(\mu+\lambda) \int \dot{u}^{j}\Big(\partial_{t} \partial_{j}(\operatorname{div} u)+\operatorname{div}\big(u \partial_{j}(\operatorname{div} u)\big)\Big) \, d x \notag\\
		&\quad-\int \dot{u}^{j}\Big(\partial_{j} P_{t}+\operatorname{div} (u \partial_{j}P )\Big) \, d x \triangleq \sum_{i=1}^{3} K_{i}.
	\end{align}
The estimates for $K_{1}$ and $K_{2}$ are the same as those in Lemma
\ref{clem52}. It remains to estimate $K_{3}$. Using
\begin{gather*}
	P_t+\div(Pu)=-(\gamma-1)P\,\div u,\\[1mm]
	\partial_j P_t+\partial_j\div(Pu)
	=-(\gamma-1)\partial_j(P\,\div u),\\[1mm]
	\partial_j P_t+\div(\partial_j P\,u)
	=-(\gamma-1)\partial_j(P\,\div u)-\partial_k(P\partial_j u^k),
\end{gather*}
we have
	\begin{align*}
		K_{3} & =(\gamma-1)\int \dot u^j\,\partial_j(P\,\div u)\,dx
		+\int \dot u^j\,\partial_k(P\partial_j u^k)\,dx\\
		& =-(\gamma-1)\int P\,\div u\,\div\dot u\,dx
		-\int P\,\partial_j u^k\,\partial_k\dot u^j\,dx \\
		& \leq \frac{\mu}{8}\|\nabla \dot{u}\|_{L^{2}}^{2}+C \|\nabla u \|_{L^{4}}^{4}+C \|P\|_{L^{4}}^{4}.
	\end{align*}

 Substituting the estimates for $K_i$ into \eqref{ll5} yields that
	\begin{align}\label{ll6}
		\frac{d}{d t}\|\sqrt{\rho} \dot{u}\|_{L^{2}}^{2}+ \|\nabla \dot{u}\|_{L^{2}}^{2}  \leq C\left( \|P \|_{L^{4}}^{4}+\|\nabla u \|_{L^{4}}^{4}\right).
 	\end{align}	
 Multiplying \eqref{ll6} by $t$ and using \eqref{uu3}, we get
 	\begin{align}\label{cll7}
 	\frac{d}{d t} \left( t\|\sqrt{\rho} \dot{u}\|_{L^{2}}^{2}\right) +  t \|\nabla \dot{u}\|_{L^{2}}^{2}\leq \|\sqrt{\rho} \dot{u}\|_{L^{2}}^{2}+C t \left( \|F\|_{L^{4}}^{4}+  \|\operatorname{curl} u\|_{L^{4}}^{4}\right) +C_{4} t \|P\|_{L^{4}}^{4}.
 	\end{align}
 	
 Set
 \begin{align*}
 \kappa\triangleq \frac{2(2\mu+\lambda)(C_{4}+1)}{3\gamma-1}.
\end{align*}
 Taking $p=3$ in \eqref{x2} and adding $\kappa t$ times the resulting inequality
 to \eqref{cll7}, we obtain
 	\begin{align*}
 		&\frac{d}{d t}\left(t \|\sqrt{\rho} \dot{u}\|_{L^{2}}^{2}+\kappa t \|P\|_{L^{3}}^{3}\right) + t\|\nabla \dot{u}\|_{L^{2}}^{2}+t\|P\|_{L^{4}}^{4} \\
 		& \leq C \left(\|\sqrt{\rho} \dot{u}\|_{L^{2}}^{2}+\|P\|_{L^{3}}^{3} \right)+C t\left( \|F\|_{L^{4}}^{4}+ \|\operatorname{curl} u\|_{L^{4}}^{4}\right)  \\[1mm]
 		& \leq  C\left(\|\sqrt{\rho} \dot{u}\|_{L^{2}}^{2}+\|P\|_{L^{3}}^{3} \right)+C t\|\sqrt{\rho} \dot{u}\|_{L^{2}}^{3}\left(\|\nabla u\|_{L^{2}}+\|P\|_{L^{2}}\right) \\[1mm]
 		& \leq  C \left(\|\sqrt{\rho} \dot{u}\|_{L^{2}}^{2}+\|P\|_{L^{3}}^{3} \right)+C t\|\sqrt{\rho} \dot{u}\|_{L^{2}}^{2}\left(\|\sqrt{\rho} \dot{u}\|_{L^{2}}^{2}+\|\nabla u\|_{L^{2}}^{2}+\|P\|_{L^{2}}^{2}\right)\\[1mm]
 		& \leq  C \left(\|\sqrt{\rho} \dot{u}\|_{L^{2}}^{2}+\|P\|_{L^{3}}^{3} \right)+C t\left(\|\sqrt{\rho} \dot{u}\|_{L^{2}}^{2}+\|P\|_{L^{3}}^{3} \right)\left(\|\sqrt{\rho} \dot{u}\|_{L^{2}}^{2}+\|\nabla u\|_{L^{2}}^{2}+\|P\|_{L^{2}}^{2}\right),
 	\end{align*}
 where \eqref{uu2} has been used in the second inequality. Combining this
 estimate with Gronwall's inequality and Proposition \ref{wen}, we obtain
 \eqref{ll0} with $i=1$.

Similarly, multiplying \eqref{ll6} by $t^{2}$ and adding $\kappa t^{2}$ times
\eqref{x2} with $p=3$, we get
\begin{align*}
		&\frac{d}{d t}\left(t^{2} \|\sqrt{\rho} \dot{u}\|_{L^{2}}^{2}+\kappa t^{2}\|P\|_{L^{3}}^{3}\right) +  t^{2}\|\nabla \dot{u}\|_{L^{2}}^{2}+t^{2}\|P\|_{L^{4}}^{4} \\[1mm]
		& \leq C t \left(\|\sqrt{\rho} \dot{u}\|_{L^{2}}^{2}+\|P\|_{L^{3}}^{3} \right)+C t^{2}\left( \|F\|_{L^{4}}^{4}+ \|\operatorname{curl} u\|_{L^{4}}^{4}\right)  \\[1mm]
		& \leq  C t \left(\|\sqrt{\rho} \dot{u}\|_{L^{2}}^{2}+\|P\|_{L^{3}}^{3} \right)+C t^{2}\left(\|\sqrt{\rho} \dot{u}\|_{L^{2}}^{2}+\|P\|_{L^{3}}^{3} \right)\left(\|\sqrt{\rho} \dot{u}\|_{L^{2}}^{2}+\|\nabla u\|_{L^{2}}^{2}+\|P\|_{L^{2}}^{2}\right).
\end{align*}
Using  Proposition \ref{wen}, \eqref{uu0}, and
Gronwall's inequality, we obtain \eqref{ll0} with $i=2$.
\end{proof}

\begin{lemma}\label{lem53}
Under condition \eqref{sca1}, there exists a positive
constant $C(T)$ such that
	\begin{gather*}
	\sup _{0 \leq t \leq T} \left( t \| \nabla^{2} u(t)\|_{L^{2}}^{2}+\|  \rho_{t} (t) \|_{L^{2}}^{2}\right) +\int_{0}^{T} t\| \nabla u_{t}  \|_{L^{2}}^{2} d t \leq C(T),\\
	\sup _{0 \leq t \leq T}\left( \|\nabla \rho(t)\|_{L^{2}}+\|\nabla \rho(t)\|_{L^{q}}\right) +\int_{0}^{T}\left( \|\nabla^{2} u \|_{L^{q}}+ \|\nabla^{2} u \|_{L^{2}}+\|\nabla u\|_{L^{\infty}}\right) d t \leq C(T).
\end{gather*}
\end{lemma}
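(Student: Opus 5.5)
We follow the structure of Lemma \ref{clem53}, adapted to the presence of $P$. The first task, and the heart of the argument, is to bound $\sup_t\|\nabla\rho\|_{L^q}$ and $\int_0^T\|\nabla u\|_{L^\infty}dt$ simultaneously.

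Differentiating \eqref{x1}, or equivalently \eqref{NS}$_1$, and testing against $|\nabla P|^{q-2}\nabla P$ gives
\begin{align*}
\frac{d}{dt}\|\nabla P\|_{L^q}\le C\|\nabla u\|_{L^\infty}\|\nabla P\|_{L^q}+C\|\nabla^2u\|_{L^q}.
\end{align*}
Since $\rho\le C$ by \eqref{wen2}, an analogous inequality holds for $\nabla\rho$. We would not bound $\|\nabla^2u\|_{L^q}$ directly. Instead we write $(2\mu+\lambda)\nabla\operatorname{div}u=\nabla F+\nabla P$. Together with \eqref{uu1} and the standard elliptic estimate, this gives
\begin{align*}
\|\nabla^2u\|_{L^q}\le C\big(\|\rho\dot u\|_{L^q}+\|\nabla P\|_{L^q}\big).
\end{align*}
The pressure-gradient contribution is then of the same Gronwall type. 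For $\|\nabla u\|_{L^\infty}$ we use \eqref{cii}. We write $\operatorname{div}u=(F+P)/(2\mu+\lambda)$ and bound
\begin{align*}
\|F\|_{L^\infty}+\|\operatorname{curl}u\|_{L^\infty}\le C\big(\|\rho\dot u\|_{L^2}+\|\rho\dot u\|_{L^q}\big)
\end{align*}
via Gagliardo--Nirenberg and \eqref{uu1}. Also $\|P\|_{L^\infty}\le C$, and $\ln(e+\|\nabla^2u\|_{L^q})\le C\ln\big(e+\|\rho\dot u\|_{L^q}+\|\nabla P\|_{L^q}\big)$. Putting these together yields
\begin{align*}
\|\nabla u\|_{L^\infty}\le C\big(1+\|\rho\dot u\|_{L^2}+\|\rho\dot u\|_{L^q}\big)\ln\big(e+\|\rho\dot u\|_{L^q}\big)+C\big(1+\|\rho\dot u\|_{L^2}+\|\rho\dot u\|_{L^q}\big)\ln\big(e+\|\nabla P\|_{L^q}\big)+C\|\nabla u\|_{L^2}.
\end{align*}

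Substituting this into the $\nabla P$ inequality gives
\begin{align*}
\frac{d}{dt}f\le Cg\,f\ln f,\qquad f=e+\|\nabla P\|_{L^q}+\|\nabla\rho\|_{L^q},
\end{align*}
where $g=1+\|\rho\dot u\|_{L^2}+\|\rho\dot u\|_{L^q}+g_1$ and $g_1=\|\rho\dot u\|_{L^q}\ln(e+\|\rho\dot u\|_{L^q})$, absorbed via $g_1\le C(1+\|\rho\dot u\|_{L^q}^{1+\delta})$. This is the main obstacle: showing $g\in L^1(0,T)$, with a bound depending on $T$ only through the constant term. We would argue as in \eqref{clll6}--\eqref{clll7}, interpolating $\|\rho\dot u\|_{L^q}$ between $L^2$ and $L^6\subset \dot H^1$. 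We then use Proposition \ref{wen} on $(0,\sigma(T))$ and the weights $t\|\nabla\dot u\|^2_{L^2}$ and $t^2\|\nabla\dot u\|^2_{L^2}$ from Lemma \ref{lem52} on the rest. Since $q<6$, the exponent $\tfrac{3q-6}{2q}(1+\delta)<2$ for small $\delta$, so the singular weight $t^{-\frac{3q-6}{4q}(1+\delta)\cdot 2}$ stays integrable near $0$. Gronwall in the form $\frac{d}{dt}\ln\ln f\le Cg$ then bounds $\sup_t\|\nabla\rho\|_{L^q}$, and consequently $\int_0^T(\|\nabla u\|_{L^\infty}+\|\nabla^2u\|_{L^q})dt\le C(T)$. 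The $L^2$ bound for $\nabla\rho$ and $\nabla^2u$ follows identically with $q$ replaced by $2$, using $\|\nabla^2u\|_{L^2}\le C(\|\rho\dot u\|_{L^2}+\|\nabla P\|_{L^2})$.

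For the first inequality, we have $\rho_t=-u\cdot\nabla\rho-\rho\operatorname{div}u$, so $\|\rho_t\|_{L^2}\le C\|\nabla\rho\|_{L^3}\|\nabla u\|_{L^2}+C\|\nabla u\|_{L^2}$, which is bounded by \eqref{wen1}. Next, $t\|\nabla^2u\|_{L^2}^2\le Ct(\|\sqrt\rho\dot u\|_{L^2}^2+\|\nabla P\|_{L^2}^2)$, bounded by \eqref{ll0} and the previous step. Finally, $\nabla u_t=\nabla\dot u-\nabla(u\cdot\nabla u)$ with
\begin{align*}
\|\nabla(u\cdot\nabla u)\|_{L^2}\le C\|\nabla u\|_{L^2}^{1/2}\|\nabla^2u\|_{L^2}^{3/2}+C\|\nabla u\|_{L^4}^2,
\end{align*}
so $\int_0^T t\|\nabla u_t\|_{L^2}^2dt$ is controlled by \eqref{ll0}, \eqref{uu0}, and the bounds on $\sup_t t\|\nabla^2u\|_{L^2}^2$ and $\int_0^T\|\nabla^2u\|_{L^2}dt$.
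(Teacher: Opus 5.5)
Your proposal is correct and is essentially the paper's proof. It uses the $L^q$ transport estimate for $\nabla\rho$ closed via $\|\nabla^2u\|_{L^q}\le C(\|\rho\dot u\|_{L^q}+\|\nabla P\|_{L^q})$, the logarithmic inequality \eqref{cii}, a $\ln\ln$-type Gronwall argument fed by the time-weighted bounds of Lemmas \ref{lem51}--\ref{lem52}, and then routine bounds for $\rho_t$, $t\|\nabla^2u\|_{L^2}^2$ and $t\|\nabla u_t\|_{L^2}^2$.

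The only variation is how you handle $\ln(e+\|\rho\dot u\|_{L^q})$. The paper interpolates $\|F\|_{L^\infty}$ against the bounded $\|F\|_{L^2}$, giving the sublinear factor $\|\rho\dot u\|_{L^q}^{3q/(5q-6)}$; this absorbs the logarithm, so only $\int_0^T\|\rho\dot u\|_{L^q}\,dt$ is needed. You instead use $L^{1+\delta}$-integrability, which comes from the slack $q<6$. Two small bookkeeping fixes are needed on your route: the precise requirement near $t=0$ is $\frac{3q-6}{2q}\cdot\frac{1+\delta}{1-\delta}<1$, not ``$<2$''; and your $g$ should also include the harmless cross term $\|\rho\dot u\|_{L^2}\ln(e+\|\rho\dot u\|_{L^q})$.
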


\begin{proof}
Applying the spatial derivative $\nabla$ to the mass equation \eqref{NS}$_1$,
we obtain
	\begin{align*}
\partial_{t} \nabla \rho+u \cdot \nabla^{2} \rho+\nabla u \cdot \nabla \rho+\operatorname{div} u \nabla \rho+\rho \nabla \operatorname{div} u=0.
	\end{align*}
	For $q \in(3,6)$, multiplying the above equality by $q|\nabla \rho|^{q-2} \nabla \rho$  gives that
	\begin{align}\label{lll3}
		\frac{d}{d t}\|\nabla \rho\|_{L^{q}} \leq C\|\nabla^{2} u \|_{L^{q}}+C \|\nabla u\|_{L^{\infty}} \|\nabla \rho\|_{L^{q}}
		\leq C\|\sqrt{\rho}\dot{u} \|_{L^{q}}+C\left( \|\nabla u\|_{L^{\infty}}+1\right)\|\nabla \rho\|_{L^{q}},
	\end{align}
where we have used the elliptic estimate
\begin{align}\label{ell}
	\|\nabla^{2}u\|_{L^{q}}
	\leq
	C\left(
	\|\sqrt{\rho}\dot{u}\|_{L^{q}}
	+\|\nabla P\|_{L^{q}}
	\right),
\end{align}
which follows from the standard $L^{p}$ theory for the elliptic system
\begin{align*}
	\mu\Delta u+(\mu+\lambda)\nabla\operatorname{div}u
	=\rho\dot{u}+\nabla P,
	\qquad
	u\to0\quad\text{as } |x|\to\infty.
\end{align*}

Using \eqref{uu1} and Gagliardo--Nirenberg inequality, we have
	\begin{align*}
		\|\operatorname{div} u\|_{L^{\infty}} +\|\operatorname{curl} u\|_{L^{\infty}} & \leq C \big(\| F\|_{L^{\infty}}+\|  P\|_{L^{\infty}}+\| \operatorname{curl} u\|_{L^{\infty}}\big) \\
		& \leq C\Big( \|\nabla F\|_{L^{q}}^{\frac{3q}{5q-6}}+\| \nabla \operatorname{curl} u\|_{L^{q}}^{\frac{3q}{5q-6}}+\|P\|_{L^{\infty}} \Big) \\
		& \leq C\left( \|\sqrt{\rho} \dot{u}\|_{L^{q}}^{\frac{3q}{5q-6}}+1\right).
	\end{align*}	
This combined with \eqref{cii} implies that
	\begin{align}\label{lll4}
			\|\nabla u\|_{L^{\infty}} & \leq C\left(\|\operatorname{div} u\|_{L^{\infty}}+\|\operatorname{curl} u\|_{L^{\infty}}\right) \ln \left(e+ \|\nabla^{2} u \|_{L^{q}}\right)+C\|\nabla u\|_{L^{2}}+C \notag\\
			& \leq C\left(1+\|\sqrt{\rho} \dot{u}\|_{L^{q}}^{\frac{3q}{5q-6}}\right) \ln \left(e+\|\nabla \rho\|_{L^{q}}+\|\sqrt{\rho} \dot{u}\|_{L^{q}}\right)+C\notag\\
			& \leq C\left(1+\|\sqrt{\rho}\dot{u}\|_{L^{q}}\right) \ln \left(e+\|\nabla \rho\|_{L^{q}}\right).
	\end{align}	
	Substituting \eqref{lll4} into \eqref{lll3}, we find that, for any
	$q\in(3,6)$,
	\begin{align}\label{lll5}
		\frac{d}{d t} \ln \left( e+\|\nabla \rho\|_{L^{q}}\right)  \leq C \ln \big(e+\|\nabla \rho\|_{L^{q}}\big)\left( 1+\| \sqrt{\rho}\dot{u} \|_{L^{q}}\right).
	\end{align}
 It deduces from Proposition \ref{wen}, Lemmas \ref{lem51}, and \ref{lem52} that
 \begin{align}\label{lll7}
 	\int_{0}^{T}\|\sqrt{\rho}  \dot{u} \|_{L^{q}}\, d t
 	&\leq C\left(\int_{0}^{\sigma(T)}\| \sqrt{\rho} \dot{u} \|_{L^{2}}^{2}\, d t\right)^{\frac{6-q}{4 q}}\left(\int_{0}^{\sigma(T)} t \| \nabla \dot{u} \|_{L^{2}}^{2} \, d t\right)^{\frac{3 q-6}{4 q}}\left(\int_{0}^{\sigma(T)} t^{-\frac{3 q-6}{2 q}}\, d t\right)^{\frac{1}{2}} \notag \\
 	&\quad+C\left(\int_{\sigma(T)}^{T}t \| \sqrt{\rho} \dot{u} \|_{L^{2}}^{2} \, d t\right)^{\frac{6-q}{4 q}}\left(\int_{\sigma(T)}^{T} t^{2} \| \nabla \dot{u}  \|_{L^{2}}^{2} \, d t\right)^{\frac{3 q-6}{4 q}}\left(\int_{\sigma(T)}^{T} t^{-\frac{5q-6}{2q}}\, d t\right)^{\frac{1}{2}}\notag \\
 	&\leq C.
 \end{align}
Thus, we derive from \eqref{lll5}, Gronwall's inequality, and \eqref{lll7} that
	\begin{align}\label{v1}
		\sup _{0 \leq t \leq T}\|\nabla \rho\|_{L^{q}} \leq C(T),
	\end{align}
	which together with \eqref{lll4} and \eqref{lll7} gives
	\begin{align}\label{v2}
		\int_{0}^{T}\|\nabla u\|_{L^{\infty}} d t \leq C(T).
	\end{align}
Similarly, one can obtain that
	\begin{align}\label{v3}
		\sup _{0 \leq t \leq T}\|\nabla \rho\|_{L^{2}} \leq C(T).
	\end{align}

	Next, from \eqref{NS}$_1$, \eqref{ell}, and Gagliardo--Nirenberg inequality, we have
	\begin{gather*}
		\|\rho_{t} \|_{L^{2}}  \leq C\big( \|\nabla \rho\|_{L^{3}}\|u\|_{L^{6}}+ \|\rho\|_{L^{\infty}}\|\nabla u\|_{L^{2}}\big)  \leq C\|\nabla u\|_{L^{2}}, \\[2mm]
		\|  \nabla^{2} u \|_{L^{2}} +\|\nabla^{2} u \|_{L^{q}}  \leq C\left( \| \sqrt{\rho} \dot{u}\|_{L^{2}}+\| \sqrt{\rho} \dot{u} \|_{L^{q}}+\|\nabla \rho\|_{L^{2}}+\|\nabla \rho\|_{L^{q}}\right),
	\end{gather*}
	 and
	\begin{align*}
		\|\nabla u_{t} \|_{L^{2}}  &\leq C\left( \|\nabla \dot{u}\|_{L^{2}}+ \|u\|_{L^{\infty}} \|\nabla^{2} u \|_{L^{2}}+ \|\nabla u\|_{L^{4}}^{2}\right)  \\
		&  \leq C\left( \|\nabla \dot{u}\|_{L^{2}}+ \|\nabla u\|_{L^{2}}^{\frac{1}{2}} \| \nabla^{2} u \|_{L^{2}}^{\frac{3}{2}}\right)\\
		&  \leq C\left( \|\nabla \dot{u}\|_{L^{2}}+ \|\nabla u\|_{L^{2}}^{\frac{1}{2}} \left( \| \sqrt{\rho} \dot{u}  \|_{L^{2}}+\|\nabla \rho\|_{L^{2}}\right) ^{\frac{3}{2}}\right).
	\end{align*}
	Combining the above estimates with \eqref{v1}--\eqref{v3}, we  complete the proof.
\end{proof}

\subsection{Proof of Theorem \ref{th2}}
The continuation argument from local to global solutions has already been
presented in detail in the proof of Theorem \ref{th1}. Therefore, we omit the
repetition and only point out that the estimates obtained above provide the
required bounds for extending the local strong solution. This completes the
proof of Theorem \ref{th2}.

\section*{Appendix}\label{app}

In this appendix, we derive the critical scaling transformation for the following compressible Navier--Stokes system in $\mathbb{R}^{3}$:
\begin{align*}
	\left\{
	\begin{aligned}
		&\rho_{t}+\operatorname{div}(\rho u)=0, \\[2mm]
		&(\rho u)_{t}+\operatorname{div}(\rho u\otimes u)
		-\mu\Delta u-(\lambda+\mu)\nabla\operatorname{div}u+\nabla P(\rho)=0.
	\end{aligned}
	\right.
\end{align*}
The far-field condition is imposed as
\begin{align*}
	\lim_{|x|\to\infty}(\rho,u)(t,x)=(0,0).
\end{align*}

We first describe the scaling with respect to the \textit{spatial normalized scale}. For $\ell>0$, let
\begin{align*}
	\rho^\ell(x,t)=\ell^a\rho(\ell x,\ell^c t),
	\qquad
	u^\ell(x,t)=\ell^b u(\ell x,\ell^c t).
\end{align*}
Substituting the above transformation into the continuity equation gives
\begin{align*}
c=b+1.
\end{align*}
For the momentum equation, the four main terms scale respectively as
\begin{align*}
\partial_t(\rho^\ell u^\ell)\sim \ell^{a+b+c}&,
\qquad
\operatorname{div}(\rho^\ell u^\ell\otimes u^\ell)\sim \ell^{a+2b+1},\\
\nabla P(\rho^\ell)\sim \ell^{a\gamma+1}&,
\qquad
\Delta u^\ell,\ \nabla\operatorname{div}u^\ell\sim \ell^{b+2}.
\end{align*}
Therefore, the invariance of the system requires
\begin{align*}
a+b+c=a+2b+1=a\gamma+1=b+2.
\end{align*}
Solving the above algebraic system yields
\begin{align*}
	a=\frac{2}{\gamma+1},\qquad
	b=\frac{\gamma-1}{\gamma+1},\qquad
	c=\frac{2\gamma}{\gamma+1}.
\end{align*}
Hence the spatially normalized scaling is
\begin{align}\label{space-scaling}
	\boxed{
		\rho^\ell(x,t)
		=
		\ell^{\frac{2}{\gamma+1}}
		\rho\left(\ell x,\ell^{\frac{2\gamma}{\gamma+1}}t\right),
		\qquad
		u^\ell(x,t)
		=
		\ell^{\frac{\gamma-1}{\gamma+1}}
		u\left(\ell x,\ell^{\frac{2\gamma}{\gamma+1}}t\right).
	}
\end{align}

Similarly, one may normalize the scaling by the time variable. For $\beta >0$, set
\begin{align*}
	\rho^\beta (x,t)=\beta ^a\rho(\beta ^c x,\beta  t),
	\qquad
	u^\beta (x,t)=\beta ^b u(\beta ^c x,\beta  t).
\end{align*}
In this case,   the \textit{time normalized scale} takes the form
\begin{align}\label{time-scaling}
	\boxed{
		\rho^\beta (x,t)
		=
		\beta ^{\frac{1}{\gamma}}
		\rho\left(\beta ^{\frac{\gamma+1}{2\gamma}}x,\beta  t\right),
		\qquad
		u^\beta (x,t)
		=
		\beta ^{\frac{\gamma-1}{2\gamma}}
		u\left(\beta ^{\frac{\gamma+1}{2\gamma}}x,\beta  t\right).
	}
\end{align}

The two formulations \eqref{space-scaling} and \eqref{time-scaling} describe the same scaling group. Indeed, if
\begin{align*}
\ell=\beta ^{\frac{\gamma+1}{2\gamma}},
\end{align*}
and the \textit{spatial normalized scale} \eqref{space-scaling} is transformed exactly into the \textit{time normalized scale} \eqref{time-scaling}.

\begin{remark}
	It is worth emphasizing that the amplitude factor in front of the density in
	the scaling transformation plays an essential role in constructing
	scaling invariant quantities.  These exponents determine how the density is rescaled and therefore must be
	taken into account when one constructs an initial quantity invariant under the
	flow scaling.
	
	However, if a quantity $Q(\rho,u)$ satisfies
	\begin{align*}
	Q(\rho^\ell,u^\ell)=Q(\rho,u),
	\qquad \text{for all } \ell>0,
    \end{align*}
	then it is also invariant under the time-normalized scaling. Indeed, for any
	$\beta >0$, we choose
	\begin{align*}
	\ell=\beta ^{\frac{\gamma+1}{2\gamma}}.
	 \end{align*}
	With this choice, the time normalized scaling coincides with the
	spatially normalized scaling. Hence
	\begin{align*}
	Q(\rho^\beta,u^\beta )
	=
	Q(\rho^\ell,u^\ell)
	=
	Q(\rho,u).
    \end{align*}
	Thus, invariance under the spatially normalized scaling is equivalent to
	invariance under the time normalized scaling.	
\end{remark}

\section*{Conflict of interest}
The authors have no conflicts to disclose.

\section*{Data availability}
No data was used for the research described in the article.



\end{document}